\newcommand{\defeq}{\stackrel{{\rm def}}{=}}
\newcommand{\vareps}{\varepsilon}
\renewcommand{\epsilon}{\varepsilon}
\renewcommand{\phi}{\varphi}
\newcommand{\h}{\hbar}
\newcommand{\la}{\langle}
\newcommand{\ra}{\rangle}
\newcommand{\tN}{\tilde{N}}
\newcommand{\tM}{\tilde{M}}
\numberwithin{equation}{section}
\theoremstyle{plain}
\newtheorem*{theorem*}{Theorem}
\newtheorem{theo}[equation]{{\sc Theorem}}
\newtheorem{cor}[equation]{{\sc Corollary}}
\newtheorem{problem}[equation]{{\sc Problem}}
\newtheorem{conj}[equation]{{\sc Conjecture}}
\newtheorem{lem}[equation]{{\sc Lemma}}
\newtheorem{prop}[equation]{{\sc Proposition}}
\theoremstyle{definition}
\newtheorem{defn}[equation]{{\sc Definition}}
\theoremstyle{remark}
\newtheorem{rem}[equation]{Remark}
\theoremstyle{assumption}
\newtheorem{assumption}[equation]{Assumption}
\newcommand{\comments}[1]{}
\newcommand{\bequ}{\begin{equation}}
\newcommand{\eequ}{\end{equation}}
\def\N{\mathbb {N}}
\def\R{\mathbb {R}}
\def\C{\mathbb {C}}
\def\embed{\hookrightarrow}
\def\eqdef{\overset{\text{def}}{=}}
\def\defeq{\eqdef}
\DeclareMathOperator{\arcsinh}{arcsinh}
\DeclareMathOperator{\Op}{Op}
\newcommand{\Oph}{\Op_\hbar}
\newcommand\abs[1]{\left| {#1} \right|}
\newcommand\norm[1]{\left\Vert {#1} \right\Vert}
\newcommand{\wklim}[1]{\xrightarrow[#1]{\textrm{wk-*}}}
\DeclareRobustCommand
\p@\hbox{.}\mkern2mu\raise7\p@\hbox{.}\mkern1mu}}
\newcommand{\Cc}{C_{\mathrm{c}}}
\newcommand{\Ci}{C^{\infty}}
\newcommand{\Cic}{\Cc^{\infty}}
\newcommand\SL{\mathrm{SL}}
\DeclareMathAlphabet{\mathcal}{OMS}{cmsy}{m}{n}
\newcommand\cD{\mathcal{D}}
\newcommand\cF{\mathcal{F}}
\newcommand\cM{\mathcal{M}}
\newcommand\cO{\mathcal{O}}
\newcommand\cS{\mathcal{S}}
\newcommand\diff{\mathrm{d}}
\newcommand\dcross{\diff^{\kern-2pt\raisebox{1pt}{$\times$}}\kern-8pt}
\newcommand\blfootnote{\xdef\@thefnmark{}\@footnotetext}
\newcommand\DM{\Delta_M}
\newcommand\DN{\Delta_N}
\newcommand\DS{\Delta_{S^{r-1}}}
\newcommand\OpWh{\Oph^{\textrm{w}}}
\newcommand\Psih{\Psi_\h}
\newcommand\bPsih{\boldsymbol{\Psi}_\h}
\newcommand\psih{\psi_\h}
\newcommand\phih{\phi_\h}
\newcommand\Eh{E_\h}
\newcommand\mh{\mu_\h}
\newcommand\mb{\bar\mu}
\newcommand\mhb{\mb_\h}
\newcommand\musc{\mu_{\textrm{sc}}}
\newcommand\qq{\textrm{quad}}
\newcommand\nq{\textrm{nq}}
\title[Higher dimensional scars]{Scarring of quasimodes on hyperbolic manifolds}
\author{Suresh Eswarathasan}
\address{School of Mathematics, Cardiff University, Senghennyd Road, Cardiff, Wales, United Kingdom}
\email{eswarathasans@cardiff.ac.uk}
\author{Lior Silberman}
\address{University of British Columbia, Department of Mathematics, 1984 Mathematics Road, Vancouver, BC V6T 1Z2}
\email{lior@math.ubc.ca}
\begin{document}

\begin{abstract} 
Let $N$ be a compact hyperbolic manifold, $M\subset N$ an embedded totally
geodesic submanifold, and let $-\h^2\DN$ be the semiclassical
Laplace--Beltrami operator.  

For any $\vareps>0$ we explicitly construct families of \emph{quasimodes}
of spectral width at most $\vareps\frac{\h}{|\log\h|}$ which exhibit a
``strong scar'' on $M$ in that their microlocal lifts
converge weakly to a probability measure which places positive weight on $S^*M$ $( \embed S^*N)$.
An immediate corollary is that \emph{any} invariant measure on $S^*N$
occurs in the ergodic decomposition of the semiclassical limit of certain quasimodes of width $\vareps \frac{\h}{|\log\h|}$.
\end{abstract}

\maketitle

\section{Introduction}
We consider a problem in the spectral asymptotics of the
Laplace--Beltrami operator $\DN$ on a compact Riemannian manifold $N$.
Following the ``semiclassical'' convention we will index our eigenvalues and
(approximate) eigenfunctions by a spectral parameter $\h$ tending to zero
(the corresponding eigenvalue being $\lambda_\h = \h^{-2}$).  Abusing
notation we may have $\h$ tend to zero along a discrete sequence of values
without making this explicit.

As discussed in greater detail below, many results on the concentration
behaviour of exact eigenfunctions apply to certain approximate
eigenfunctions as well and we address here the converse problem of
constructing approximate eigenfunctions with prescribed concentration
behavior.  We start by specifying the relevant notion of
``approximate eigenfunction'', a relaxation of the eigenfunction equation
$\DN\Psih = \frac{\Eh}{\h^2} \Psih$:

\begin{defn} \label{d:logscale}
Fix $C>0$ and a sequence of \emph{energies} $\{\Eh\}_{\h}$ tending to $E_0>0$.  A family of \emph{quasimodes of width}
$\frac{C\h}{\abs{\log\h}}$ with \emph{central energies} $\Eh$ is
a sequence $\left\{\Psih\right\}_\h$ of $L^2(N)$-normalized
functions on $N$ such that
$$\norm{\left(-\h^2\DN-\Eh\right)\Psih}_{L^2(N)} \leq \frac{C\h}{\abs{\log\h}}.$$  
\end{defn}
If we prefer not to specify $C$ we will use the term ``\emph{log-scale} quasimodes''.  In the setting of our article, we will eventually set $\Eh = 1 + \mathcal{O}(h)$ for all $\h$.  Finally, note the existence of
such a non-zero quasimode as above shows that $-\h^2\DN$ has an eigenvalue $E$
in the interval
$\left[\Eh - \frac{C\h}{\abs{\log\h}}, \Eh + \frac{C\h}{\abs{\log\h}}\right]$.

We will study measure-theoretic concentration in the weak-* limit.
Precisely, to a quasimode $\Psih$ we associate the linear functional
$\mhb$ on $N$ given by $\mhb(f) = \int_N \abs{\Psih}^2 f \, dV$ for test
functions $f$ on $N$, where $dV$ is the Riemannian volume form on $N$.

These measures have natural lifts to distributions $\mh$ on
the cotangent bundle $T^*N$ (commonly referred to as ``microlocal lifts'');
we review the construction later.
A weak-* limit of these microlocal lifts is necessarily a probability measure
on $T^*N$. These limits, to be denoted $\musc$ and called
\emph{quantum limits} or \emph{semiclassical measures}, are the subject
of this paper.  Informally, they may be called weak-{*} limits of the
quasimodes themselves.

The reader is advised that these terms (quantum limit and semiclassical
measure) are usually reserved for the case where the $\Psih$ are
exact eigenfunctions, but the more general use is appropriate here. We will be clear in each invocation of these terms.

Our main result is the following

\begin{theorem*}[Cor.\ \ref{c:sigmund}]
Let $N$ be a compact hyperbolic manifold (that is, a compact Riemannian manifold
of constant negative curvature) then there exists $C>0$ such that any
probability measure $\mu$ on the unit cotangent bundle $SN$, which is invariant under
the geodesic flow, arises as the quantum limit of quasimodes of width $\frac{C \h}{|\log \h|}$.
\end{theorem*}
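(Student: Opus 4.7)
The plan is to combine the principal scarring construction of the paper---producing, for each embedded totally geodesic $M \subset N$, log-scale quasimodes whose microlocal lifts put positive mass on $S^*M$---with Sigmund's density theorem for periodic-orbit measures; the label \texttt{c:sigmund} signals this second ingredient.

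First, I would specialize the principal theorem to $M = \gamma$ a primitive (embedded) closed geodesic, viewed as a totally geodesic $1$-submanifold of $N$; in dimensions where $\gamma$ fails to be embedded, a finite cover of $N$ can be used to force embeddedness. For each $\varepsilon > 0$ this produces a family $\{\Psi_{\gamma,\hbar}\}_\hbar$ of log-scale quasimodes of width $\varepsilon\hbar/|\log\hbar|$ with common central energy $E_\hbar = 1 + \mathcal{O}(\hbar)$ and semiclassical limit $\nu_\gamma$ satisfying $\nu_\gamma(S\gamma) > 0$. A careful choice of the amplitude data along $\gamma$ in the construction should allow one to arrange $\nu_\gamma|_{S\gamma}$ to coincide with the normalized periodic-orbit measure $\mu_\gamma$ on $S\gamma$.

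Next, by Sigmund's theorem applied to the transitive Anosov geodesic flow on $SN$, the set of periodic-orbit measures is weak-$*$ dense in the simplex $\mathcal{M}^\phi(SN)$ of invariant probability measures; given the target $\mu$, write $\mu = \lim_n \sum_{i=1}^{K_n} \alpha_i^{(n)} \mu_{\gamma_i^{(n)}}$ weak-$*$. I would then form the normalized superposition
\[
  \Psi_\hbar^{(n)} = \bigl(Z_\hbar^{(n)}\bigr)^{-1} \sum_{i=1}^{K_n} \sqrt{\alpha_i^{(n)}}\, \Psi_{\gamma_i^{(n)},\hbar}.
\]
Since all summands share a single central energy $E_\hbar$, the superposition is itself a log-scale quasimode, and its microlocal lift converges as $\hbar \to 0$ to $\sum_i \alpha_i^{(n)} \mu_{\gamma_i^{(n)}}$ provided the off-diagonal cross-terms $\langle \Oph(a) \Psi_{\gamma_i,\hbar}, \Psi_{\gamma_j,\hbar}\rangle$ with $i \ne j$ are $o(1)$. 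A diagonal extraction in $(n, \hbar)$ then produces a single family of log-scale quasimodes with quantum limit exactly $\mu$, and fixing $\varepsilon$ once and for all supplies the uniform constant $C$ of the statement.

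The main obstacle is controlling the off-diagonal cross-terms in the superposition, which reduces to microlocal near-orthogonality of $\{\Psi_{\gamma_i,\hbar}\}_i$. Distinct closed geodesics have disjoint lifts in $SN$ outside a lower-dimensional intersection set, so the wavefront sets of the individual scarring quasimodes concentrate on disjoint flow-invariant subsets; for any fixed test symbol $a$ supported away from intersections, the cross products vanish as $\hbar \to 0$. The same microlocal disjointness delivers $Z_\hbar^{(n)} \to 1$ (preservation of normalization) and, via near-orthogonality of the defect vectors $(-\hbar^2\Delta_N - E_\hbar)\Psi_{\gamma_i,\hbar}$, bounds the width of $\Psi_\hbar^{(n)}$ by the maximum rather than the $K_n$-dependent sum of the individual widths, preserving the $\varepsilon\hbar/|\log\hbar|$ scale uniformly along the diagonal.
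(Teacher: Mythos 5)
Your proof reaches the right conclusion via a genuinely different, and considerably more complicated, route than the paper, and the extra machinery introduces real technical risks that the paper's argument sidesteps entirely.

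The paper invokes the \emph{strong} form of Sigmund's theorem: for a topologically mixing Anosov flow, the measures supported on \emph{single} periodic orbits are already weak-$*$ dense in the space of invariant probability measures. Hence one approximates $\mu$ by a sequence $\delta_{S\gamma_k}$ of individual orbit measures, applies Theorem~\ref{t:main}(1) separately for each $k$ (with $M=\gamma_k$), and extracts a diagonal subsequence $\psi_{k_i,n_i}$ using the separability of $C(SN)$ and the $M$-independence of the width constant (for hyperbolic $N$, $\tilde\lambda_{E_0}=2\sqrt{E_0}$ is the same for every totally geodesic $M$). No superposition of quasimodes is ever formed. You, on the other hand, write $\mu=\lim_n\sum_i\alpha_i^{(n)}\mu_{\gamma_i^{(n)}}$ and form the weighted sum $\Psi_\hbar^{(n)}=(Z_\hbar^{(n)})^{-1}\sum_i\sqrt{\alpha_i^{(n)}}\Psi_{\gamma_i^{(n)},\hbar}$. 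Notice that this is self-undermining: you yourself state that \emph{the set of periodic-orbit measures} is dense, so you could take $K_n=1$ and $\alpha_1^{(n)}=1$ throughout, eliminating the superposition and the entire cross-term analysis. The superposition is not needed.

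If you do insist on superposing, the technical cost is real and you underestimate it. First, the naive triangle inequality on the defect $(-\hbar^2\Delta-E_\hbar)\Psi_\hbar^{(n)}$ gives a spectral width of order $\big(\sum_i\sqrt{\alpha_i^{(n)}}\big)\frac{\hbar}{|\log\hbar|}$, which can grow like $\sqrt{K_n}\,\frac{\hbar}{|\log\hbar|}$; your fix via near-orthogonality of the defect vectors is plausible for fixed $n$, but making it uniform as $K_n\to\infty$ and the geodesics $\gamma_i^{(n)}$ proliferate and lengthen (hence their collar neighbourhoods shrink and the thresholds $\hbar_0(\gamma_i^{(n)})$ tend to $0$) requires a quantitative microlocal disjointness estimate that is not automatic. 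Second, the ``microlocal near-orthogonality'' of the states themselves must be proved against the full cutoff symbol class, and again the rate depends on the separation of the lifted orbits in $SN$, which degenerates as the orbits become long. The paper's diagonal argument never forms sums, so none of these difficulties arise: each $k$ is handled separately with its own $\hbar$-threshold, and the uniform width constant is the only ingredient needed for the diagonalization. Finally, your remark that ``a careful choice of the amplitude data along $\gamma$'' is needed to obtain the orbit measure is unnecessary: Theorem~\ref{t:main}(1) already produces the Liouville measure on $SM$, which for $M=\gamma$ a closed geodesic \emph{is} the (symmetrized) periodic-orbit measure.
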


In the next part of the introduction we motivate our work by reviewing
the quantum unique ergodicity problem and various results towards it.  Knowledgeable
readers may wish to skip to Section~\ref{subsec:results} where we discuss
all our results.

\subsection{The quantum unique ergodicity problem}

The cotangent bundle $T^{*}N$ is naturally the phase space for a single
particle moving on our manifold $N$.  We fix a quantization scheme $\Oph$,
assigning to each \emph{observable}, which is a smooth function $a$ on $T^*N$
belonging to an appropriate symbol class, an operator
$\Oph(a)\colon L^{2}(N)\to L^{2}(N)$.
Fix a positive observable $H$, called the \emph{Hamiltonian},
and suppose that for a sequence of values of $\h$ tending to zero we
have chosen a corresponding sequence of normalized eigenfunctions
$\Psih\in L^{2}(N)$ where
\begin{equation}\label{eq:exact-ef}
\Oph(H)\Psih = \Eh\Psih.
\end{equation}
Let us suppose that $\Eh=E_0+\cO(\h)$ for some fixed $E_0>0$.  To each $\Psih$,
we associate its \emph{Wigner measure} which is the distribution $\mh$ on $T^{*}N$
given by
\[
\mh(a)=\left\langle \Psih,\Oph(a)\Psih\right\rangle \,.
\]

A major problem in spectral asymptotics is to
study the concentration behavior of the eigenfunctions $\Psih$ -- the
basic expectation is that the more chaotic the classical dynamics induced
by $H$, the more uniformly distributed the eigenfunctions $\Psih$ are
as $\h\to 0$.  One tool for studying this problem is the examination of subsequential
weak-* limits of the $\mu_\h$, and it is these objects that we call ``semiclassical measures'' or ``quantum limits''
$\musc$.  Their study was initiated in the works of Schnirel'man, Zelditch,
and Colin de Verdi\`ere 
\cite{Schnirelman:Avg_QUE,Zelditch:SL2_Lift_QE,CdV:Avg_QUE}, which we describe in more detail shortly.

Let us first review some aspects of quantization schemes.  First, since two different quantization schemes differ by terms of order
$\cO(\h)$, it follows that the measure $\musc$ depends on the sequence $\Psih$ but
not on the scheme itself.  Second, the existence of a positive
quantization scheme (the so-called Friedrich symmetrization) shows that
any limit must be a positive measure.  Using for $a$ the constant
function $1$ -- for which $\Oph(a)$ is the identity operator -- shows that
our limit $\musc$ is a probability measure.  Standard techniques also show
that the measure $\musc$ must be supported on the
\emph{energy surface }$\{H(x,\xi)=E_0\} \subset T^*N$.
Third, Egorov's Theorem relates the Hamiltonian flow induced by $H$ to
the action of the Schr\"odinger propagator
$U(t)=\exp\left(-\frac{it}{\h}\Oph(H)\right)$.
Since replacing $\Psih$ with its time-evolved state
$U(t)\Psih=\exp\left(-\frac{it\Eh}{\h}\right)\Psih$
has no effect on $\mh$ one can show that any limit $\musc$ must
be invariant under the Hamiltonian flow generated by $H$.
For a more general discussion of these properties, see the book
\cite{Zworski:SemiclassicalAnalysis} by Zworski.

\begin{problem}[Quantum Unique Ergodicity (``QUE")] \label{prob:que}
Classify, amongst the flow-invariant measures supported on level sets
of $H$, those which are weak-{*} limits of sequences of Wigner measures
of eigenfunctions.
\end{problem}

The case of a free particle moving on a compact
Riemmanian manifold where the classical dynamics is geodesic
flow corresponds to the Hamiltonian $H(x,\xi)=\norm{\xi}_{g}^{2}$;
here, we identify the
tangent and cotanget bundles using the metric $g$.  Its semiclassical
quantization $\Oph(H)$ is\footnote{Formally, this holds up to an
operator of order $\cO_{L^2}(h)$  -- perhaps it is better to use the
converse formulation that the principal symbol of $-h^2 \Delta_g$ is
$\left\vert \xi \right\vert^2_g$.} $-\h^{2}\Delta_{g}$,
where $\Delta_{g}$ is the Laplace--Beltrami operator for the metric $g$.
The eigenfunction equation \eqref{eq:exact-ef}
is then equivalent to the familar spectral problem $-\Delta\Psi=\lambda\Psi$ where we
take $\h=\sqrt{\frac{1}{\lambda}}$ and $E_{\h}=1$. For non-zero $E_0$, the energy surfaces $\{H = E_0\}$ are then all just dilations of 
the unit tangent bundle $SM$, which we call our \emph{energy surface}.  The metric on $N$ naturally gives rise to a geodesic-flow invariant measure, the Liouville measure $\mu_{L,N}$.

We briefly revert 
to indexing the eigenfunctions of $\DN$ in accordance with their set of non-decreasing eigenvalues so that we may state the following classical theorems:
\begin{theo}\label{thm:QE}
Let $N$ be a compact Riemannian manifold of dimension $n$, let
$\left\{\Psi_{n}\right\}_{n=0}^{\infty}\subset L^{2}(N)$
be an orthonormal basis of eigenfunctions of $\DN$ 
with corresponding eigenvalues $\left\{ \lambda_{n}\right\} _{n=0}^{\infty}$.
Write $\mu_{n}$ for the Wigner measure associated to the eigenfunction
$\Psi_{n}$.  We have that 
\begin{enumerate}
\item (Convergence on average/Generalized Weyl law)  The following statement on distributions holds:
\[
\frac{1}{T^{\frac{n}{2}}}\sum_{n<T}\mu_{n}\wklim{T\to\infty}\mu_{L,N}\,.
\]

\item (``Quantum ergodicity'') If the Liouville measure $\mu_{L,N}$ is
ergodic with respect to the geodesic flow, then there exists a subsequence
$\left\{ \Psi_{n_{k}}\right\} _{k=0}^{\infty}$ of density one along
which the Wigner measures converge to $\mu_{L}$.  Stated otherwise,
``almost all" eigenfunctions asymptotically equidistribute on $SN$.
\end{enumerate}
\end{theo}
A major case where the Liouville measure is ergodic is that of negatively
curved manifolds where we further have
\begin{conj}[Quantum Unique Ergodicity; Rudnick--Sarnak \cite{RudnickSarnak:Conj_QUE}]
For $N$ compact with negative sectional curvature, we have
$\mu_{n}\wklim{n\to\infty}\mu_{L,N}$.  In other words, the Liouville
measure is the \emph{unique} quantum limit.
\end{conj}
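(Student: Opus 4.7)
The statement is the Rudnick--Sarnak QUE conjecture, which remains open in general; any honest plan must therefore outline the partial approaches that have succeeded in special cases and identify where they break down. The goal is to show that the only flow-invariant probability measure on $SN$ arising as a weak-$*$ limit of the Wigner measures $\mu_n$ of an $L^2$-orthonormal eigenbasis is the Liouville measure $\mu_{L,N}$. The general framework from the microlocal discussion already gives that any subsequential limit $\musc$ is a $\phi^t$-invariant probability measure on $SN$; combined with Anosov's theorem (the geodesic flow on a compact negatively curved manifold is ergodic, in fact mixing, for $\mu_{L,N}$) and the Schnirel'man--Zelditch--Colin de Verdi\`ere theorem, the problem reduces to ruling out exceptional subsequences along which $\musc \neq \mu_{L,N}$.

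The central quantitative tool I would use is the entropy method of Anantharaman and Anantharaman--Nonnenmacher. The plan is to fix a finite measurable partition $P$ of $SN$ of small diameter, form the refined partitions $P^{(n)}=\bigvee_{k=0}^{n-1}\phi^{-k}P$, and bound the quantum probability that an eigenfunction's microlocal support follows a prescribed symbolic itinerary $(\alpha_0,\dots,\alpha_{n-1})$ up to the Ehrenfest time $T_E \sim \tfrac{|\log\h|}{\lambda_{\max}}$. The combinatorial input is a hyperbolic dispersive estimate for the Schr\"odinger propagator applied to coherent states localized on the unstable foliation; iterating this estimate along admissible words and taking $\h\to 0$ is expected to deliver a lower bound
\begin{equation*}
h_{KS}(\musc,\phi^1) \;\geq\; \tfrac{1}{2}\int_{SN}\log\bigl|\det d\phi^1|_{E^u}\bigr|\,d\musc,
\end{equation*}
which for our constant-curvature case of dimension $d+1$ reads $h_{KS}(\musc)\geq d/2$. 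In particular this rules out concentration on closed geodesics and on totally geodesic submanifolds of positive codimension. To upgrade to full QUE one would then need to push the entropy bound all the way to the topological entropy $h_{\mathrm{top}}=d$ and invoke the variational principle to identify $\musc$ with $\mu_{L,N}$.

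In the arithmetic case --- when $N=\Gamma\backslash\HH^{d+1}$ for a congruence lattice $\Gamma$ --- I would follow Lindenstrauss and work with joint eigenfunctions of the Laplacian and the full Hecke algebra. The first step is to use Hecke-theoretic amplification to show that $\musc$ is invariant not only under $\phi^t$ but also under the $p$-adic correspondences, producing a measure invariant under a higher-rank diagonalizable action. The second step is to verify positive entropy of $\musc$ for the $p$-adic component (a much easier input than the archimedean entropy above), and the third is to invoke Lindenstrauss's measure classification for such actions to conclude $\musc = \mu_{L,N}$, after separately ruling out escape of mass using compactness of $N$.

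The main obstacle, of course, is the factor-of-two gap in the archimedean entropy bound: the Anantharaman--Nonnenmacher estimate saturates for half-localized coherent states, and no mechanism is presently known that forces generic Laplace eigenfunctions away from this saturating regime. The results of this paper, which exhibit quasimodes of width $\ep\h/|\log\h|$ scarring on \emph{any} invariant measure, show that this obstacle is genuine at the quasimode level: any proof of QUE for exact eigenfunctions must exploit some finer feature -- arithmeticity, or a strict improvement of the dispersive estimate -- that distinguishes them from general log-scale quasimodes.
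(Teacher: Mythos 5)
The statement you were asked about is the Rudnick--Sarnak conjecture, which the paper records precisely as a \emph{conjecture}: it offers no proof, and none is known. Your proposal correctly recognizes this and, rather than claiming a proof, surveys the two known partial strategies (the Anantharaman--Nonnenmacher entropy method and Lindenstrauss's arithmetic QUE via Hecke recurrence and measure rigidity) and pinpoints the genuine obstruction, namely the half-entropy barrier; this is an accurate account of the state of the art, and your closing remark that the present paper's log-scale quasimode constructions show the barrier is real at the quasimode level matches exactly how the authors themselves position their result. One caveat: your assertion that the bound $h_{KS}(\mu_{\mathrm{sc}})\geq d/2$ (for an ambient hyperbolic manifold of dimension $d+1$) ``rules out concentration on totally geodesic submanifolds of positive codimension'' is too strong. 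A measure supported on $SM$ for a totally geodesic $M$ of dimension $m$ can have entropy up to $m-1$, which exceeds $d/2$ once $m\geq (d+2)/2$; the entropy bound only excludes such submanifolds of sufficiently small dimension (and, of course, closed geodesics). Indeed, the whole point of this paper is that concentration on $SM$ --- and, via Sigmund's theorem, on arbitrary invariant measures --- does occur for quasimodes of width $C\h/|\log\h|$, so the entropy bound cannot be expected to forbid it for exact eigenfunctions without additional input. Since the statement is open, there is no proof in the paper to compare against; your treatment is the appropriate one.
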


This conjecture predicts a form of uniform distribution of the eigenfunctions.  The opposite behavior, meaning an enhancement of some
eigenfunctions along closed geodesics, was observed numerically by Heller
\cite{Heller:Scarring} in the case of plane billiards and named
``scarring''.  Perhaps the strongest form of this phenomenon, a
``strong scar'' along an invariant measure, is the situation where a 
semiclassical measure has this singular measure as an atom in its ergodic
decomposition.

Some ergodic Euclidean billiards were shown to be quantum ergodic in the
sense of Theorem~\ref{thm:QE} by
G\'erard--Leichtnam \cite{GerardLeichtnam:QEboundary}
and Zelditch--Zworski \cite{ZelditchZworski:simpleQEboundary}.
The question of whether the numerically observed scarring persists in the
semiclassical limit for ergodic billiards was mostly settled by Hassell, who showed
in \cite{Hassell:NonUniqueBilliard} that for the one-parameter family
of billiards known as the Bunimovich stadium, almost every member has a sequence of eigenfunctions whose quantum limit puts positive
mass on the set of ``bouncing ball'' trajectories, a one-parameter family
of closed geodesics.  In negative curvature, closed orbits are
unstable so such families of trajectories cannot exist.  However, it is still an open question as to whether semiclassical measures on the stadium contain scars, particularly on the single closed orbit with endpoints on the circular wings.

Positive results toward the Rudnick--Sarnak Conjecture were
obtained by Lindenstrauss \cite{Lindenstrauss:SL2_QUE}
in the case of hyperbolic surfaces with the surface and the eigenfunctions
enjoying additional arithmetic symmetries; we note that these arguments were recently
simplified by Brooks--Lindenstrauss in \cite{BrooksLindenstrauss:OneHecke}.
To the knowledge of the authors, all positive results on general manifolds have depended on the breakthrough of
Anantharaman \cite{Anantharaman:QUE_Ent}.
She showed that for manifolds with geodesic flows having the Anosov
property, quantum limits must have \emph{positive entropy} with respect to
the action of the geodesic flow, which in particular
rules out the possibility of very singular measures being quantum
limits.  For example, a measure supported only a union of closed geodesics cannot occur as a semiclassical measure.
Follow-up works with generalizations and improvements include
\cite{AnantharamanNonnenmacher:HalfEntropyAnosov,AnantharamanKochNonnenmacher:BetterEntropy,AnantharamanSilberman:HaarComponent,Anantharaman:ICM2010Survey,Riviere:HalfEntropyNonpositiveCurv}.

The present article supports this line of work, specifically in studying to which extent these positive-entropy results are \emph{sharp}.

\subsection{Quasimodes}

Many of the positive-entropy results discussed in the previous section
continue to hold when one weakens the hypothesis that the $\bPsih$
are exact eigenfuctions.  In fact, the hypothesis that they are
\emph{log-scale quasimodes} in the sense of Definition \ref{d:logscale}
suffices.\footnote{The constant $C$ may vary from one sequence to another.}
For these we have that:
\begin{enumerate}
\item Every quantum limit of a sequence of log-scale quasimodes is a
probability measure supported on the energy surface $SN$.
\item This measure is invariant under the geodesic flow.
\item On a manifold of negative sectional curvature, any
weak-{*} limit of log-scale quasimodes with $C>0$ small enough has
positive entropy.\end{enumerate}

This version of Anantharaman's result is quantitative, in that the 
entropy bound depends on the spectral width parameter $C$.  Conversely
one may ask what is the smallest possible entropy of a quantum limit of
a sequence of quasimodes with such a width, or more generally
seek to classify those limits.

\begin{problem}[QUE for quasimodes]
Classify the weak-{*} limits of Wigner measures associated to log-scale
quasimodes.
\end{problem}

As we investigate this problem, we reserve the notation $\bPsih$ for
a sequence of quasidmoes for the rest of this paper.

The first result concerning limits of quasimodes was obtained by Brooks:
\begin{theo}[\cite{Brooks:QuasimodeScarring}]
Let $N$ be a compact hyperbolic surface and $\gamma\subset N$
be a closed geodesic.  Then for any $\epsilon>0$, there exists $\delta(\epsilon)>0$ and a sequence of quasimodes of width $\frac{\epsilon \h}{|\log \h|}$ and
central energies $\Eh$ whose semiclassical measure $\musc$ satisfies
$\mu_{sc}(\{ S\gamma\}) \geq \delta(\epsilon)$ where $S\gamma$ is the sphere bundle of $\gamma$. 
\end{theo}

Brooks's construction is analogous to one of Faure--Nonnenmacher--De Bi\`evre
\cite{FaureNonenmacherDeBievre:CatmapScar} in the toy model known
as the \emph{quantum cat map}.  In particular, the result depends on
the periodic boundary conditions on the hyperbolic surface and on
the connection between eigenfunctions of the Laplace operator and
the representation theory of $\SL_{2}(\R)$.

Using microlocal techniques instead, Nonnenmacher and the first named author
obtained a result for general Hamiltonians $H$ on a compact surface:
\begin{theo}[\cite{EswarathasanNonnenmacher:QuasimodeScarringSurfaces_preprint}]
\label{thm:EswarathasanNonnenmacher}
Let $\left(N,g\right)$ be a compact Riemannian surface and $\gamma$ be a \emph{hyperbolic} orbit of the Hamiltonian flow on an energy
surface $H^{-1}(E_0)$ for some regular energy $E_0>0$. Then for any $\epsilon>0$ there is a sequence
of quasimodes $\{\bPsih\}_{\h}$ of width at most $\epsilon\frac{\h}{\left|\log\h\right|}$ and central energies $E_{\h} = E_0 + \cO(\h / |\log \h|)$
of the quantum Hamiltonian $\Oph(H)$ whose semiclassical measure $\mu_{sc}$ has the property that $\mu_{sc}(\{S\gamma\}) \geq \frac{\epsilon}{\pi \lambda_{\gamma}} \frac{2}{3 \sqrt{3}} + \cO((\epsilon/\lambda_{\gamma})^2)$.  Here $\lambda_{\gamma}$ is the expansion rate along the unstable direction of the orbit $\gamma$.
\end{theo}

\subsection{Results of this paper}\label{subsec:results}

We seek to extend the above results to higher-dimensional manifolds
(but only for the semiclassical Laplace--Beltrami operator),
replacing the periodic geodesic $\gamma$ with a totally geodesic
submanifold $M$, and achieve this goal when $N$ is a hyperbolic
manifold.  However, our techniques are mostly microlocal and there is
reason to hope that they will apply in more general geometric settings.

The result is best thought of as a ``transfer" principle:
given a sequence of log-scale quasimodes on $M$ with associated quantum
limit $\mu_{sc}$, we extend them transverally using the hyperbolic dynamics
tranverse to $M$ to log-scale quasimodes on $N$ which still concentrate
on $SM$ in the same manner as the original sequence, or at least give it
positive mass.  To obtain specifically the Liouville measure $\mu_{L,M}$ on
$SM$, we use the equidistributed sequence provided by the Quantum Ergodicity
Theorem.  Our main result is the following:

\begin{theo} \label{t:main}
Let $N$ be a complete hyperbolic manifold, let $M\subset N$ be an embedded
compact totally geodesic submanifold, and pick a central energy $E_0>0$.
\begin{enumerate}
\item Suppose we are given a sequence of central energies energies
$\Eh = E_0 + \cO(\h)$ and some $\delta >0$.
Select a width constant which satisfies
$\pi \tilde{\lambda}_{E_0}(1 + \delta) \geq C := C_M \geq \pi \tilde{\lambda}_{E_0}$ where $\tilde{\lambda}_{E_0}= 2 \sqrt{E_0}$ is the expansion rate in the
unstable directions transverse to $M$.  Then there exists a sequence
of quasimodes $\{ \bPsih \}_{\h}$ of $N$ of width $\frac{C \h}{| \log \h|}$
for $\h \leq \h_0(M, \delta)$ and whose quantum limit is the
Liouville measure on $SM$.
\item Suppose in addition that $N$ is compact.  Then for any $\epsilon>0$
there exists a sequence of quasimodes with central energies converging to $E_0$ and of width $\frac{\epsilon\h}{\left|\log\h\right|}$ whose quantum limit
has $\mu_{L,M}$ as an ergodic component of mass at least
$\eta=\eta(C_M,\epsilon)>0$.
\item In both cases one may replace $\mu_{L,M}$ with any semiclassical measure $\mu_{sc}$ on $SM$ arising from log-scale quasimodes on $M$ but at the cost of widening the quasimodes to the
sum of the widths given in (1) (or (2)) and that of the quasimodes on $M$.
\end{enumerate}
\end{theo}

The bulk of the paper is devoted to establishing (1).  The reference
\cite[Sec.\ 6]{EswarathasanNonnenmacher:QuasimodeScarringSurfaces_preprint}
shows how to deduce (2) from (1) using spectral projection; see Lemma \ref{l:partloc} below.
As detailed in Lemma~\ref{l:ansatz-separates}, the same arguments automatically
establish (3) as well.  Unlike the previous results for surfaces, our methods are not restricted by the dimension of $M$ or by the codimension of $N$ in $M$.

We expect that the natural higher-dimensional generalization of
Theorem~\ref{thm:EswarathasanNonnenmacher} holds, with the only assumption
that the dynamics transverse to $M$ is hyperbolic.  We also find that
under the hypotheses of $N$ compact and negatively curved it suffices
to directly show concentration on closed geodesics
(exactly the totally geodesic $1$-dimensional submanifolds) in a strong
sense: once concentration on arbitrary closed geodesics is achieved
(with sufficient uniformity in the parameters), one can go beyond
log-scale semiclassical measures supported on totally geodesic
submanifolds and realize every invariant measure whatsoever as a log-scale
semiclassical measure:
\begin{cor} \label{c:sigmund}
Let $N$ be a compact hyperbolic manifold and let $\mu$ be a probability
measure on $SN$ invariant by the geodesic flow.  Then there is a sequence
of log-scale quasimodes whose associated semiclassical measure is $\mu$.
Furthermore, given $\epsilon>0$ there is $\eta=\eta(\epsilon)>0$ and a sequence
of quasimodes on $N$ of width $\frac{\epsilon\h}{\left|\log\h\right|}$ whose quantum limit $\mu$ carries weight at least $\eta(\epsilon)$ on the component $\mu$.\end{cor}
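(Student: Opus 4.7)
The plan is to combine three ingredients: Sigmund's density theorem for periodic-orbit measures of Anosov flows, Theorem~\ref{t:main}(1) applied to closed geodesics (the $1$-dimensional totally geodesic submanifolds of $N$), and a linear-combination plus diagonal-extraction argument reducing the general case to convex combinations of periodic-orbit measures.

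Since the geodesic flow on the compact hyperbolic manifold $N$ is Anosov, Sigmund's theorem provides a sequence
\[
\mu^{(k)} = \sum_{j=1}^{J_k} c_j^{(k)}\, \mu_{\gamma_j^{(k)}},\qquad c_j^{(k)}>0,\quad \sum_j c_j^{(k)} = 1,
\]
with $\gamma_j^{(k)}$ distinct closed geodesics, such that $\mu^{(k)} \wklim{k\to\infty} \mu$ in the space of flow-invariant probability measures on $SN$. Applying Theorem~\ref{t:main}(1) to each $\gamma_j^{(k)}$ at energy $E_0 = 1$ yields, for a single universal width constant $C_0 \geq \pi\tilde{\lambda}_{E_0} = 2\pi$ (valid in constant negative curvature since $\tilde{\lambda}_{E_0} = 2$ is independent of the geodesic), a log-scale quasimode sequence $\{\Psi_{\h,j,k}\}_\h$ of width $C_0 \h/|\log\h|$, central energy $1+\cO(\h)$, and quantum limit $\mu_{\gamma_j^{(k)}}$.

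For each fixed $k$, form the $L^2$-normalized linear combination
\[
\Phi_{\h,k} = \mathcal{N}_{\h,k}^{-1/2} \sum_{j=1}^{J_k} \sqrt{c_j^{(k)}}\,\Psi_{\h,j,k}.
\]
Each $|\Psi_{\h,j,k}|^2$ concentrates (after projection to $N$) on the closed curve $\gamma_j^{(k)}$, and the finitely many distinct closed geodesics $\gamma_j^{(k)}$ admit mutually disjoint tubular neighborhoods outside which the $L^2$-mass vanishes as $\h\to 0$. Cauchy--Schwarz then forces $\langle \Psi_{\h,i,k}, \Psi_{\h,j,k}\rangle = o_\h(1)$ for $i\neq j$, so $\mathcal{N}_{\h,k} = 1 + o_\h(1)$ and, by the triangle inequality with common central energy in $1+\cO(\h)$, $\Phi_{\h,k}$ is a quasimode of width $(1+o_\h(1))C_0 \h/|\log\h|$. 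The same disjoint-support argument applied to $\Oph(a)\Psi_{\h,j,k}$ (which remains microlocalized near $S\gamma_j^{(k)}$ for $a$ compactly supported) shows that the cross Wigner terms vanish as $\h\to 0$, so $\mu_{\Phi_{\h,k}} \wklim{\h\to 0} \mu^{(k)}$. A diagonal extraction $\h_k\to 0$ with the weak-$*$ distance from $\mu^{(k)}$ forced below $1/k$ produces a log-scale quasimode sequence $\{\Phi_{\h_k,k}\}_k$ whose semiclassical measure is $\mu$, proving the first assertion.

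For the quantitative second part, replace Theorem~\ref{t:main}(1) by Theorem~\ref{t:main}(2) applied with the given $\epsilon$; each $\Psi_{\h,j,k}$ then has quantum limit containing $\mu_{\gamma_j^{(k)}}$ as a component of mass at least $\eta_0 = \eta(C_0,\epsilon)>0$ independent of $j,k$. The linear combination above then inherits mass at least $\eta_0$ on $\mu^{(k)}$, and the diagonal extraction transfers this mass onto $\mu$ in the weak-$*$ limit. The principal technical obstacle is the cross-term estimate in the Wigner calculation: it requires the quasimodes built from Theorem~\ref{t:main} not merely to be weakly concentrated on their respective geodesics but to have vanishing $L^2$-mass outside small tubular neighborhoods. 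Provided this microlocalization is available from the explicit ansatz underlying the proof of Theorem~\ref{t:main} (which is natural for ansatz-type constructions), the linear-combination and diagonal arguments above are routine.
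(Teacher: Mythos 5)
Your first assertion follows the same underlying ingredients as the paper (Sigmund plus Theorem~\ref{t:main}(1) plus diagonal extraction), but you take a genuinely more elaborate route.  Sigmund's theorem for Anosov flows already asserts that \emph{single} periodic-orbit measures $\delta_{S\gamma_k}$ are weak-$*$ dense in the simplex of invariant measures, so there is no need to pass through convex combinations $\mu^{(k)}=\sum_j c_j^{(k)}\mu_{\gamma_j^{(k)}}$ and then build linear combinations $\Phi_{\h,k}=\cN_{\h,k}^{-1/2}\sum_j\sqrt{c_j^{(k)}}\,\Psi_{\h,j,k}$.  The paper simply takes, for each $k$, a sequence $\{\psi_{k,n}\}_n$ of log-scale quasimodes with quantum limit $\delta_{S\gamma_k}$ (the uniform-in-$\gamma$ spectral width constant $C$ in Theorem~\ref{t:main}(1) makes this legitimate across $k$) and diagonalizes.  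Your cross-term and normalization bookkeeping is avoidable; moreover, making it rigorous requires not just that each $\Psi_{\h,j,k}$ microlocalizes on $S\gamma_j^{(k)}$ but that its $L^2$ mass is supported in a collar $N_{\epsilon_1(k)}(\gamma_j^{(k)})$ with the collars of the finitely many $\gamma_j^{(k)}$ disjoint, which forces $\epsilon_1$, and hence the threshold $\h_0$, to depend on $k$.  That is compatible with a diagonal argument but is an extra layer of uniformity you need to track; the paper sidesteps it entirely.

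The second assertion is where you have a genuine gap.  You propose replacing Theorem~\ref{t:main}(1) with Theorem~\ref{t:main}(2) at the level of the individual $\Psi_{\h,j,k}$ and then forming the same linear combinations.  But the quasimodes produced by Theorem~\ref{t:main}(2) are obtained by a spectral projection (Lemma~\ref{l:partloc}), which is a nonlocal operator on $L^2(N)$: its output need not be supported in, nor even $L^2$-concentrated in, any tubular neighborhood of $\gamma_j^{(k)}$.  The disjoint-support argument you rely on to kill the cross Wigner terms $\langle\Psi_{\h,i,k},\Oph(a)\Psi_{\h,j,k}\rangle$ therefore does not apply, and you have no control over the off-diagonal contributions to $\mu_{\Phi_{\h,k}}$.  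The paper's order of operations avoids this: first diagonalize the part-(1) quasimodes to get a single log-scale family converging to $\mu$, and only then apply the spectral projection argument to that single family to shrink the width to $\epsilon\h/\abs{\log\h}$ while retaining weight $\eta(\epsilon)$ on $\mu$.  If you want to salvage your route, you should form your linear combinations with the part-(1) quasimodes, diagonalize, and only then invoke Lemma~\ref{l:partloc}.
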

\begin{proof}
It was shown by Sigmund \cite{Sigmund:SpaceInvariantMeasures} that
there is a sequence $\left\{ \gamma_{k}\right\} _{k=1}^{\infty}$
of periodic geodesics whose natural measures $\delta_{k}=\delta_{S\gamma_{k}}$
converge in the weak-{*} topology to $\mu$. For each $k$ let $\left\{ \psi_{k,n}\right\} _{n=1}^{\infty}$
be a sequence of log-scale quasimodes on $N$ (all sequences of the same
spectral width) whose weak-{*} limit is $\delta_{S\gamma_{k}}$
guaranteed by Theorem \ref{t:main} (1).
By the separability of the space $C\left(SN\right)$ dual to the
space of measures, a diagonal argument -- using the uniformity in the
constants from Theorem \ref{t:main} (1) -- gives a subsequence
$\left\{ \psi_{k_{i},n_{i}}\right\} _{i=1}^{\infty}$
which is a sequence of log-scale quasimodes and which converges to the
desired limit $\mu$.
The spectral projection argument can again be used to shorten the spectral
width to $\frac{\epsilon \h}{\abs{\log \h}}$ while keeping at least
weight $\eta(\epsilon)$ on the component $\mu$.
\end{proof}

To the knowledge of the authors, this is the first result in the mathematical
quantum chaos literature which demonstrates that $o(\h)$ quasimodes
(which yield invariant semiclassical measures, log-scale or not) can
develop scars on fractal subsets.  As in the work of Brooks, our bounds
explore the extent to which the mentioned entropy bounds on quasimodes are sharp.

It is important to add that the construction of quasimodes which localize
along closed geodesics, or more generally along smooth invariant
submanifolds, has a long and rich history.  For a brief exposition of
this history, see the introduction of
\cite{EswarathasanNonnenmacher:QuasimodeScarringSurfaces_preprint}
and the references therein.

\subsection{Outline of the proof and further remarks}
The main idea of our proof is to combine the long-time evolution
idea which first originated in the work of
Vergini-Schneider \cite{VerginiSchneider:MatrixCoeffScars} and the
Fermi-normal coordinate/quantum Birkhoff normal idea of
Colin de Verdi\`ere-Parisse \cite{CdVParisse:UnstableScarsI}.

Let $N$ be a hyperbolic manifold and $M\subset N$ a compact totally
geodesic submanifold.  Then for $\epsilon_1>0$ small enough, the universal
cover of the $\epsilon_1$-neighbourhood $N_{\epsilon_1}(M)\subset N$
(thought of as a subset of the universal cover $\tilde{N}$ of $N$) is exactly the
$\epsilon_1$-neighbourhood $N_{\epsilon_1}(\tilde{M})$.  In particular, we have
a well-defined nearest-neighbor projection
$\pi\colon N_{\epsilon_1}(M)\to M$ and a coordinate system
$z\mapsto(\pi(z),x)$ identifying $N_{\epsilon_1}(M)$ with a product
$M\times B$ where $B$ is a Euclidean $\epsilon_1$-ball of dimension
$r = \dim N - \dim M$.  In this system the metric takes the form of a warped
product and we can separate the variables to get:
\begin{equation}
\DN = \frac{1}{1+|x|^2}\DM + \Delta_x +
      \left(x \cdot \frac{\partial}{\partial x}\right)^2 + 
      (n-1)\left(x \cdot \frac{\partial}{\partial x}\right)\,.
\end{equation}
Here $\DM$ is the Laplace--Beltrami operator the hyperbolic manifold $M$, $\Delta_x$ is the Euclidean Laplace operator in the tranverse variable $x$ and
$x \cdot \frac{\partial}{\partial x} = \sum_{i=1}^r x_i\frac{\partial}{\partial x_i}$
is the smooth radial differentiation operator written in Cartesian coordinates.  The calculation is reproduced in Section \ref{sub:Fermi-form} and crucially depends on the precise form and special symmetries of the hyperbolic metric.

For our desired log-scale quasimode we use the ansatz
\[
\bPsih(z)=\psih(x)\phih\left(\pi(z)\right)
\]
 where $\phih$ is an eigenfunction on $M$ of energy $E_0$, therefore making $\phih$
stationary for the Schr\"odinger propagator associated to $\DM$.  $\bPsih$
will concentrate on $M$ exactly when $\psih$ will concentrate on the
origin of the $x$-plane. This improves on the construction in
\cite{EswarathasanNonnenmacher:QuasimodeScarringSurfaces_preprint} where
the $\phih$ were particular localized wave packets on $M$ of width $\frac{C_{\gamma} \h}{| \log \h|}$ so that their
evolution (especially self-interference) was non-trivial and needed
to be dealt with using a particular normal form due to Sj\"ostrand \cite{Sjostrand:Resonances2D}; we will discuss this in greater detail shortly.  The use of exact eigenfunctions, or more generally
quasimodes of specified width, therefore simplifies the construction.

Plugging $\bPsih$ into the Schr\"odinger equation 
\[
-\h^2\Delta_{N}\bPsih = (E_0 + f(\h)) \bPsih
\]
gives that $\psih$ must be a quasimode of central energy $0$
for a second Schr\"odinger operator of the form
\[
-\h^{2}\Delta_x-\h^{2}\left(x\cdot \frac{\partial}{\partial x}\right)^2-(n-1)\h^{2}\left(x \cdot \frac{\partial}{\partial x}\right)-\frac{E|x|^{2}}{1+|x|^{2}} - f(\h) \,
\]
where $f(\h) = \mathcal{O}(\h)$.
The term $(n-1)\h^{2} \left( x \cdot \frac{\partial}{\partial x} \right)$ is lower order
in $\h$ and hence negligible in specific sense.  Ignoring for the moment the second order
term $\h^{2}\left(x\cdot \frac{\partial}{\partial x}\right)^2$ and using the approximation
$\frac{|x|^{2}}{1+|x|^{2}}\approx |x|^{2}$ (appropriate since our operator
will be applied to wave packets which are concentrated near $x=0$)
we have approximately the operator
\[
-\h^{2}\Delta_{x}-E_0\abs{x}^{2},
\]
commonly referred to as the \emph{inverted harmonic oscillator}.
This Hamiltonian is well-known to have log-scale quasimodes concentrating at
the origin, which is a hyperbolic fixed point of the associated classical
dynamics.

To realize this intuition rigorously we employ a \emph{Quantum
Birkhoff Normal Form} due to Iantchenko \cite{Iantchenko:BirhoffNormalForm98}.
This transformation, given by a semiclassical Fourier integral operator, approximates
our Schr\"odinger operator near its hyperbolic fixed point by an inverted
Harmonic oscillator together with controlled higher-order corrections.  This approach is not a novelty, but both the variation of $x \in \R^r$ rather
than $\R$ and our desire for uniformity sufficient for the argument of
Corollary~\ref{c:sigmund} require working through the arguments with some care.

\begin{rem}
While we sometimes use the fact that all Lyapunov exponents are equal,
the argument in the transverse direction via the use of the quantum Birkhoff normal form, should hold generally.

There is then reason to hope that our result could be extended to the
case where $N$ is a general manifold and $M\subset N$ is an invariant
hyperbolic submanifold: that is, a totally geodesic submanifold
where the dynamics transverse to $M$ are uniformly expanding and contracting.
The difficulty in establishing that case
is that the exact separation of variables available in constant negative
curvature would only hold in some approximate sense.  The transverse operator
would be more complicated and probably less explicit, and further work would
be required to establish the existance of a Birkhoff normal form.
\end{rem}

\begin{rem}
While we mananged to construct log-scale quasimodes concentrating
on general invariant measures, including those supported on fractals,
it would be interesting to construct them directly without appealing
to Sigmund's Theorem.  This would allow us to better explore the quantitative
relation between the spectral width of the quasimodes and the possible
entropies of the associated quantum limits: with the present method
even to obtain concentration on measures of relatively large entropy
we require the relatively large spectral width necessary to obtain
concentration on closed geodesics.  
\end{rem}

\begin{rem}
We note that in spite of our highly symmetric setting of constant negative curvature, it is non-trivial to obtain entropy lower bounds for measures associated to our sequence of quasimodes of width $\frac{\epsilon \h}{| \log \h|}$. While it is natural to believe that letting $\epsilon$ tend to 0 will give us a form of Anantharaman's lower bound, this would require transposing our construction into that of \cite{Anantharaman:QUE_Ent} and doing a precise analysis of various constants appearing in her estimates.  This is an ongoing project of the first named author and S. Nonnenmacher.
\end{rem}

\subsection*{Acknowledgements}
The authors would like to thank Alex Gorodnik, Joanna Karczmarek, Brian Marcus,
Jens Marklof and Roman Schubert for helpful discussions and the School
of Mathematics at the University of Bristol for their hospitality during
an important stage in the development of this paper.
The second named author would also like to thank Peter Sarnak for originally
suggesting this problem.
During the writing of this article, the first named author was supported by
a 10th Anniversary Grant from the Heilbronn Institute of Mathematical Research
and the second was supported by an NSERC Discovery Grant.


\section{Coordinates and Decompositions} \label{s:coords}

In this section we discuss the hyperbolic geometry from the symmetric space
point of view and provide an explicit Fermi normal coordinate system near
our selected totally geodesic submanifold $M$, which will allow us to
re-write the operator $h^2 \DN$ with respect to a warped-product structure.
This is the higher dimensional analogue of the idea in Section 6 of the
work of Colin de Verdi\`ere--Parisse \cite{CdVParisse:UnstableScarsI}.

\begin{rem}
Identifying the tangent and cotangent bundles using the Riemannian metric,
we may use the notations $TN$ and $T^*N$
(along with those for natural sub-bundles) interchangeably. 
\end{rem}
\subsection{A non-quantitative collar Lemma}
Our setup is as follows:
\begin{itemize}
\item Let $G$ be a semisimple Lie group with Iwasawa decomposition $G=NAK$,
$H<G$ be a semisimple closed subgroup containing $A$ such that $K_{H}=K\cap H$
is a maximal compact subgroup of $H$.
\item Let $S_{H}=H/K_{H}$ and $S=G/K$ be the corresponding symmetric spaces;
note that $S_{H}$ embeds in $S$ as a totally geodesic submanifold.
\item Let $\Gamma<G$ be a discrete subgroup such that $\Gamma_{H}=\Gamma\cap H$
is a uniform lattice in $H$.\end{itemize}

\begin{lem}
After passing to a finite-index subgroup we may assume $\Gamma_{H}\backslash\left(\Gamma\cap HK\right)=\left\{ 1\right\} $.\end{lem}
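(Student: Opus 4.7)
The plan is to establish the lemma in two steps: first show that for the given $\Gamma$ the quotient $\Gamma_H\backslash(\Gamma\cap HK)$ is already finite, and then pass to a finite-index subgroup of $\Gamma$ that eliminates the (finitely many) non-trivial cosets. I note at the outset that the desired conclusion $\Gamma_H\backslash(\Gamma\cap HK)=\{1\}$ is equivalent to the inclusion $\Gamma\cap HK\subseteq H$, which geometrically says that any $\gamma\in\Gamma$ sending the basepoint $eK\in S$ into the submanifold $S_H$ actually preserves $S_H$ (as an element of $H$).

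For the finiteness, I introduce the ``$K$-component'' map $\Phi\colon\Gamma\cap HK\to K_H\backslash K$ defined by writing $\gamma=hk$ with $h\in H$, $k\in K$ and setting $\Phi(\gamma)=K_Hk$; the ambiguity in the decomposition is precisely by $K_H=H\cap K$, so $\Phi$ is well defined. Since left-multiplication by $\Gamma_H\subset H$ affects only the $H$-part, $\Phi$ descends to a map $\bar\Phi\colon\Gamma_H\backslash(\Gamma\cap HK)\to K_H\backslash K$; a direct calculation (noting that if $\Phi(\gamma)=\Phi(\gamma')$ then $\gamma'\gamma^{-1}\in H\cap\Gamma=\Gamma_H$) shows $\bar\Phi$ is injective. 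The target $K_H\backslash K$ is compact, so it remains to verify that the image is discrete. If $\Phi(\gamma_n)\to K_Hk_\infty$ with $\gamma_n=h_nk_n$ all distinct modulo $\Gamma_H$, then the $k_n$ stay in a compact set and, using cocompactness of $\Gamma_H$ in $H$, I may left-translate the $h_n$ by elements of $\Gamma_H$ to keep them in a compact set as well; this forces $\Gamma_H\gamma_n$ to converge in $\Gamma_H\backslash G$. But $\Gamma_H\backslash\Gamma$ is discrete in $\Gamma_H\backslash G$ (since $\Gamma$ is discrete in $G$ and $\Gamma_H$ acts properly), which yields a contradiction. Hence the image is discrete in $K_H\backslash K$, and therefore finite.

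Finally, let $\gamma_1,\dots,\gamma_{m-1}\in\Gamma\setminus\Gamma_H$ enumerate the non-trivial $\Gamma_H$-cosets in $\Gamma\cap HK$. I aim to construct a finite-index subgroup $\Gamma'\leq\Gamma$ such that each coset $\Gamma_H\gamma_i$ is disjoint from $\Gamma'$; this yields $\Gamma'\cap HK\subseteq\Gamma_H\cap\Gamma'=\Gamma'_H$ and completes the proof. Since $\Gamma$ is a finitely generated linear group (being the fundamental group of a compact hyperbolic manifold), it is residually finite by Malcev's theorem. For each $i$ I produce a finite quotient $\rho_i\colon\Gamma\to Q_i$ whose image of $\gamma_i$ avoids the image of $\Gamma_H$, take $\Gamma_i^\circ=\ker\rho_i$, and set $\Gamma'=\bigcap_i\Gamma_i^\circ$. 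The main obstacle is this last separation: producing such $Q_i$ requires a form of $\Gamma_H$-separability of $\Gamma$ at the individual element $\gamma_i$, which is stronger than bare residual finiteness; in the (arithmetic) setting underlying the paper it is readily supplied by congruence subgroups applied to the finite list $\gamma_1,\dots,\gamma_{m-1}$, and this is the only step in the argument requiring more than formal manipulation.
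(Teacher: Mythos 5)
Your proof follows the same two-step outline as the paper's: show that $\Gamma_H\backslash(\Gamma\cap HK)$ is finite, then kill the non-trivial cosets by passing to a finite-index subgroup. For the finiteness, the paper's route is shorter: a compact fundamental domain $\cF_H$ for $\Gamma_H\backslash H$ makes $\cF_H K$ compact, so $\Gamma\cap\cF_H K$ is finite by discreteness of $\Gamma$, and it represents the quotient since $HK=\Gamma_H\cF_H K$. Your injective map $\bar\Phi\colon\Gamma_H\backslash(\Gamma\cap HK)\to K_H\backslash K$ proves the same thing with more machinery (and the ``discreteness'' step in your argument really establishes finiteness outright), so the two versions are essentially equivalent here. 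Where your write-up is genuinely more careful is the second step. The paper invokes only residual finiteness of $\Gamma$ to produce a finite-index $\Gamma^1$ omitting the non-identity representatives $\gamma_i$, and then asserts $\Gamma^1\cap HK=\Gamma^1\cap H$. But omitting the $\gamma_i$ themselves is not enough: an element $\gamma\in\Gamma^1\cap HK$ decomposes as $\gamma=\lambda\gamma_i$ with $\lambda\in\Gamma_H$, and nothing forces $\lambda$ (and hence $\gamma_i$) to lie in $\Gamma^1$. What one actually needs is to keep the entire coset $\Gamma_H\gamma_i$ disjoint from $\Gamma^1$ --- equivalently, to find a finite-index subgroup of $\Gamma$ that contains $\Gamma_H$ and avoids the $\gamma_i$. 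That is exactly the $\Gamma_H$-separability you flag, and it is strictly stronger than bare residual finiteness. In the paper's concrete setting ($\Gamma$ a cocompact lattice in $O(n,1)$, $\Gamma_H$ the stabilizer of an embedded totally geodesic submanifold) this separability is indeed available, via congruence subgroups in the arithmetic case or, in general, the separability of geometrically finite subgroups of closed hyperbolic manifold groups; so the lemma stands, but you have correctly named the hypothesis the argument actually uses where the paper's proof is too terse.
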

\begin{proof}
Let $\cF_{H}$ be a fundamental domain for $\Gamma_{H}\backslash H$.
Then the finite set $\Gamma\cap\cF_{H}K$ is a set of representatives
for the quotient. Since $\Gamma$ is residually finite there is a
subgroup $\Gamma^{1}$ not containing these elements (except for the
identity), so that such that $\Gamma^{1}\cap HK=\Gamma^{1}\cap H=\Gamma_{H}^{1}$.\end{proof}

\begin{assumption}
For the rest of this article, we let $M=\Gamma_{H}\backslash H/K_{H}$ be embedded in $N=\Gamma\backslash G/K$.
\end{assumption}

Now, let $\pi\colon S\to S_{H}$ be the projection onto a convex subset,
and write $N_{\epsilon}(S_{H})$ to be the $\epsilon-$neighborhood
of $S_{H}$ in $S$.

\begin{lem}
There exists an $\epsilon$ such that if $\gamma\in\Gamma$ has $\gamma N_{\epsilon}(S_{H})\cap N_{\epsilon}(S_{H})$ then $\gamma\in\Gamma_{H}$.
\end{lem}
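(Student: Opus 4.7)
My plan is to argue by contradiction, using two key preliminaries. \emph{Embeddedness dichotomy:} since $M\hookrightarrow N$ is embedded and $S\to N$ is a covering, the preimage of $M$ in $S$ is a disjoint union of components, each a single $\Gamma$-translate of $S_H$; consequently, for every $\gamma\in\Gamma$ either $\gamma S_H=S_H$ or $\gamma S_H\cap S_H=\emptyset$. \emph{Identification of the setwise stabilizer:} if $\gamma S_H=S_H$, then since $eK\in S_H=HK/K$ we have $\gamma\cdot eK\in S_H$, so $\gamma\cdot eK=hK$ for some $h\in H$, whence $\gamma\in hK\subset HK$; the previous lemma then gives $\gamma\in\Gamma\cap HK=\Gamma_H$.

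Assuming for contradiction that no such $\epsilon$ exists, I extract sequences $\epsilon_n\downarrow 0$, $\gamma_n\in\Gamma\setminus\Gamma_H$, and $z_n\in N_{\epsilon_n}(S_H)$ with $\gamma_n z_n\in N_{\epsilon_n}(S_H)$. Set $p_n=\pi(z_n)$ and $q_n=\pi(\gamma_n z_n)$; these lie in $S_H$ within distance $\epsilon_n$ of $z_n$ and $\gamma_n z_n$ respectively. Using the compactness of $M=\Gamma_H\backslash S_H$, I pick $\delta_n,\eta_n\in\Gamma_H$ so that $\delta_n p_n$ and $\eta_n q_n$ lie in a fixed compact fundamental domain $\cF\subset S_H$ for $\Gamma_H$. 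Replacing the data by $z_n'=\delta_n z_n$, $w_n'=\eta_n\gamma_n z_n$, and $\gamma_n'=\eta_n\gamma_n\delta_n^{-1}$ gives $\gamma_n' z_n'=w_n'$, both $z_n'$ and $w_n'$ in the compact set $\overline{N_1(\cF)}\subset S$ for large $n$, and $\gamma_n'\in\Gamma\setminus\Gamma_H$ (since $\Gamma_H$ is a subgroup).

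Proper discontinuity of $\Gamma$ on $S$ implies that only finitely many elements of $\Gamma$ send a point of $\overline{N_1(\cF)}$ to another point of the same compact set, so after passing to subsequences $\gamma_n'=\gamma\in\Gamma\setminus\Gamma_H$ is constant; a further extraction produces limits $z_n'\to z\in\overline{\cF}\subset S_H$ and $w_n'\to w\in\overline{\cF}\subset S_H$ with $\gamma z=w$. Thus $\gamma S_H\cap S_H\neq\emptyset$, so the embeddedness dichotomy forces $\gamma S_H=S_H$, and then the algebraic step forces $\gamma\in\Gamma_H$, a contradiction.

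The main obstacle, and what separates this from a routine proper-discontinuity exercise, is recognizing that the embeddedness of $M$ is precisely what upgrades the weak limit conclusion ``$\gamma$ sends some point of $S_H$ into $S_H$'' into the strong ``$\gamma$ preserves $S_H$ setwise''; without this promotion, two lifts of $M$ could intersect transversally and the lemma would fail for every $\epsilon>0$.
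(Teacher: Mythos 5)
Your proof is correct and follows essentially the same overall strategy as the paper's: argue by contradiction, use the cocompactness of $\Gamma_H$ in $H$ to translate the footpoints of the offending $z_n$ into a fixed compact fundamental domain, invoke proper discontinuity of $\Gamma$ on $S$ to reduce to a single $\gamma$, pass to limits to find $z,w\in S_H$ with $\gamma z=w$, and conclude $\gamma\in\Gamma\cap HK=\Gamma_H$.

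The one place where you go beyond the paper, and where you are right to do so, is the final deduction. The paper jumps directly from ``$\gamma y_\infty=y'_\infty$ with $y_\infty,y'_\infty\in S_H$'' to ``$\gamma\in\Gamma\cap HK$.'' Since $S_H=HK/K$, the condition $\gamma\in HK$ is equivalent to $\gamma$ sending the \emph{basepoint} $eK$ into $S_H$; knowing that $\gamma$ sends \emph{some} $y_\infty=h_1K\in S_H$ into $S_H$ only gives $\gamma\in h_2Kh_1^{-1}\subset HKH$, and two totally geodesic copies of $\HH^m$ inside $\HH^n$ can certainly meet transversally without coinciding. Your two preliminaries supply exactly the missing promotion: the embeddedness of $M$ forces the lift $\bigcup_{\gamma\in\Gamma}\gamma S_H$ to be an embedded (hence disjoint-sheeted) submanifold of $S$, so $\gamma S_H\cap S_H\neq\emptyset$ forces $\gamma S_H=S_H$, and only then does the algebraic step $\gamma\cdot eK\in S_H\Rightarrow\gamma\in HK$ apply. (Equivalently, the paper's argument would become airtight if its preceding lemma were strengthened to clear out the finite set $\Gamma\cap\cF_H K\cF_H^{-1}$ rather than just $\Gamma\cap\cF_H K$, but your route via embeddedness is cleaner and correctly identifies why the hypothesis that $M$ be embedded is essential for this lemma to hold at all.) Well done.
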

\begin{proof}
If not there are $\gamma_{n}\in\Gamma$ not in $\Gamma_{H}$ and $z_{n}\in S$
such that $z_{n},\gamma_{n}z_{n}\in N_{\epsilon_{n}}(S_{H})$ with
$\epsilon_{n}\to0$. Let $y_{n}=\pi(z_{n})$, $y'_{n}=\pi(\gamma_{n}z_{n})$.
Let $\lambda_{n},\lambda'_{n}\in\Gamma_{H}$ be such that $\lambda_{n}y_{n},\lambda'_{n}\lambda y'_{n}\in\cF_{H}/K_{H}$.
Then replacing $z_{n}$ with $\lambda_{n}z_{n}$ and $\gamma_{n}$
with $\lambda'_{n}\lambda_{n}\gamma_{n}\lambda_{n}^{-1}$ we may assume
that $y_{n},y'_{n}\in\cF_{H}/K_{H}$ (and still $\gamma_{n}\notin\Gamma_{H}$).
Passing to a subsequence we may assume that $y_{n}\to y_{\infty}$,
$y'_{n}\to y'_{\infty}$. We also have $d(z_{n},y_{n})=d(\gamma_{n}z_{n},y'_{n})\leq\epsilon_{n}\to0$
so also $z_{n}\to y_{n}$, $\gamma_{n}z_{n}\to y'_{n}$. In particular
$\gamma_{n}$ move $y_{\infty}$ a bounded amount so it belongs to a finite
set, and we may further assume the sequence is a constant element $\gamma$
such that $\gamma y_{n}=y'_{n}$. But then $\gamma\in\Gamma\cap HK$
so $\gamma\in\Gamma_{H}$ as claimed.\end{proof}
\begin{cor}[Collar Neighborhood] \label{c:collar}
For $\epsilon$ small enough, $N_{\epsilon}\left(M\right)=\Gamma_{H}\backslash N_{\epsilon}\left(S_{H}\right)$.
\end{cor}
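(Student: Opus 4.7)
The statement essentially packages the preceding non-quantitative injectivity radius lemma into a geometric equality of subsets. My plan is to choose $\epsilon$ exactly as provided by the preceding lemma, and then verify the two inclusions together with the freeness of the residual $\Gamma_H$-action.

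First I would note that $S_H$ is a convex (totally geodesic) subset of the symmetric space $S$, and $\Gamma_H < H$ acts on $S$ by isometries preserving $S_H$, so the open tube $N_\epsilon(S_H)$ is $\Gamma_H$-invariant. This makes the quotient $\Gamma_H \backslash N_\epsilon(S_H)$ well-defined, and the covering projection $p \colon S \to \Gamma\backslash S = N$ descends to a map $\bar p \colon \Gamma_H \backslash N_\epsilon(S_H) \to N$ whose image lies in $N_\epsilon(M)$, because $p$ is $1$-Lipschitz and sends $S_H$ onto $M$.

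Next I would establish surjectivity of $\bar p$ onto $N_\epsilon(M)$. Given $\bar z \in N_\epsilon(M)$, lift it to some $z \in S$ and pick a point $\bar m \in M$ with $d_N(\bar z, \bar m)<\epsilon$. The preimage $p^{-1}(M)$ is exactly $\bigcup_{\gamma \in \Gamma/\Gamma_H} \gamma S_H$, so $\bar m$ lifts to some $\gamma \tilde m \in \gamma S_H$ with $d_S(z,\gamma \tilde m)<\epsilon$ (using the fact that the covering $S\to N$ is a local isometry and choosing the lift close to $z$). Then $\gamma^{-1} z \in N_\epsilon(S_H)$ lies over $\bar z$, proving surjectivity.

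For injectivity of $\bar p$, suppose $z_1, z_2 \in N_\epsilon(S_H)$ with $p(z_1) = p(z_2)$. Then $z_2 = \gamma z_1$ for some $\gamma \in \Gamma$, which forces $\gamma N_\epsilon(S_H) \cap N_\epsilon(S_H) \ni z_2$, hence non-empty. The previous lemma (with the chosen $\epsilon$) now yields $\gamma \in \Gamma_H$, so $z_1$ and $z_2$ represent the same element of $\Gamma_H \backslash N_\epsilon(S_H)$. Combining this with the earlier two steps produces the claimed identification.

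The only point requiring mild care is in the surjectivity step: one must make sure that, having fixed the target $\bar m \in M$ within distance $<\epsilon$ of $\bar z$, the lift $\tilde m \in S_H$ can be chosen to be the \emph{nearest} preimage of $\bar m$ to $z$, so that $d_S(z,\gamma\tilde m) = d_N(\bar z,\bar m)<\epsilon$; this uses only that $p$ is a Riemannian covering and that $\epsilon$ is smaller than the injectivity radius at $z$ (which one may shrink further if needed). Everything else is formal from the preceding lemma, so I do not anticipate a serious obstacle here.
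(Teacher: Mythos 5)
Your proof is correct and follows the route the paper intends: the paper states the corollary without a separate argument, treating it as immediate from the preceding injectivity lemma, and you have simply spelled out the standard covering-space bookkeeping (descent of the projection, surjectivity via the fact that the quotient distance is realized by an orbit point, and injectivity via the lemma). One small simplification: the surjectivity step needs no shrinking of $\epsilon$ below the injectivity radius at $z$; the identity $d_N(p(z),\bar m)=\min_{\gamma\in\Gamma} d_S(z,\gamma\tilde m)$ for the Riemannian covering already supplies the nearest lift at distance $<\epsilon$.
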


\subsection{Hyperbolic space}\label{sub:Fermi-form}

Suppose now $G=O(n,1)\supset O(m,1)=H$ and write $r=n-m$.
For $S$ take the upper half-space model with coordinates
$$\left(y, w_{1},\ldots,w_{m-1},w_{m},\ldots,w_{n-1}\right)
 = \left(y, w', w''\right)$$
in which $S_H = \left\{ \left(y, w', 0 \right)\right\}$.  Here $w' = (w_{1},\ldots,w_{m-1})$ and $w''= (w_{m},\ldots,w_{n-1})$.

Given a point $z=\left(y, w', w''\right)\in S$ let $\pi(z)\in S_{H}$
be the nearest-neighbor projection.  Writing $w'' \in\R^r$ in spherical
coordinates $\left(|w''|,\Omega \right)$
with $\Omega=\frac{1}{\abs{w''}}w''$,
the projection $\pi(z)$ is the point $\left(\eta,w'\right)$
where $\eta=\sqrt{y^{2}+|w''|^{2}}$. Writing $x=\frac{1}{y}w''$ with
magnitude $\abs{x} = \frac{|w''|}{y}$, the hyperbolic
distance between $z$ and $\pi(z)$ is 
\[
\rho=\rho\left(z,\pi(z)\right)=\arcsinh |x|=\log\left(|x|+\sqrt{1+|x|^{2}}\right)\,.
\]
We note that $|x|=\sinh\rho$, $\sqrt{1+|x|^{2}}=\cosh\rho$, $\eta = y\cosh\rho$.  We will eventually work with the coordinate system $\left(\eta,w',x\right)$ but as an intermediate step
also use the system $\left(\eta,w',\rho,\Omega \right)$ for which the inverse
map to the initial coordinate system is
$y=\frac{\eta}{\cosh\rho}$, $|w''|=\eta\tanh\rho$, $w''=(\eta\tanh\rho) \Omega$.

We can now compute the Laplace operator in these coordinates, starting
from the expression in the upper-halfspace coordinates $(y, w',w'')$
$$\DN = \left(y\frac{\partial}{\partial y}\right)^{2}
       -(n-1)\left(y\frac{\partial}{\partial y}\right)
       +y^{2}\left(\Delta_{w'}+ \Delta_{w''}\right)\,,$$
where $\Delta_{w'}=\sum_{i}\frac{\partial^{2}}{\partial (w'_{i})^{2}}$
and similarly for $w''$.
We note for reference that $\frac{d\rho}{d|x|}=\frac{1}{\sqrt{1+|x|^{2}}}$
so that $\frac{\partial\rho}{\partial y}=\frac{1}{\sqrt{1+|x|^{2}}}\frac{\partial |x|}{\partial y}$
and similarly for $w''_i$ instead of $y$.

By the collar neighborhood lemma, we may use the Fermi normal coordinate system (established
above for $S$ globally) locally in $N_\epsilon(M)$, therefore giving us:
\begin{lem} \label{l:simp-op-vol}
Using the coordinate system $\left(\eta,w', x\right)$ in the collar
neighborhood described in \eqref{c:collar},
the Laplace--Beltrami operator has the following Fermi-normal coordinate-type
structure: 
$$\DN=\frac{1}{1+|x|^2}\DM+\Delta_x + \left(\sum_{i=1}^r x_i\frac{\partial}{\partial x_i}\right)^2 + (n-1)\left(\sum_{i=1}^r x_i \frac{\partial}{\partial x_i}\right)$$
where $x \in \R^r$.  Furthermore, the volume element on $N$ in these
coordinates is
$$ \left( \eta^{-m} \, d \eta \, d^{m-1} w' \right) \left( (1+|x|^2)^{\frac{m-1}{2}} \, d^r x \right),$$
where the first term is the volume element of the hyperbolic metric on $M$.

\end{lem}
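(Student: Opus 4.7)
My plan is to exhibit the metric on the collar neighborhood as a warped product over $M$ via the tube-distance parametrization, to apply the standard formulas for $\Delta$ and $dV$ in that structure, and only then to pull the transverse piece back to the Cartesian variable $x \in \R^r$. First I would introduce the intermediate coordinate system $(\eta, w', \rho, \Omega)$, where $\rho = \arcsinh|x|$ is the hyperbolic distance from $z$ to $\pi(z) \in M$ and $\Omega = x/|x| \in S^{r-1}$. Using $y = \eta \operatorname{sech}\rho$ and $w'' = (\eta\tanh\rho)\Omega$, differentiation of the upper-half-space metric $(dy^2 + |dw'|^2 + |dw''|^2)/y^2$ should convert it into the equidistant-hypersurface form
$$ds^2 = \cosh^2\rho \cdot ds^2_M + d\rho^2 + \sinh^2\rho \, d\Omega^2_{r-1},$$
with $ds^2_M = (d\eta^2 + |dw'|^2)/\eta^2$ the upper-half-space hyperbolic metric on $M$. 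The key cancellation to verify here is that the cross terms in $d\eta\,d\rho$ vanish, which follows from $\operatorname{sech}^2\rho + \tanh^2\rho = 1$ together with its derivative in $\rho$.

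Next I would use $\sqrt{|g|} = \cosh^m\rho \, \sinh^{r-1}\rho \sqrt{|g_M|} \sqrt{|g_{S^{r-1}}|}$ and apply $\Delta = |g|^{-1/2}\partial_i(|g|^{1/2}g^{ij}\partial_j)$ block-by-block (the inverse is block-diagonal, with blocks $\operatorname{sech}^2\rho \cdot g_M^{-1}$, $1$, and $\operatorname{csch}^2\rho \cdot g_{S^{r-1}}^{-1}$). This should yield
$$\DN = \operatorname{sech}^2\rho \, \DM + \partial_\rho^2 + (m\tanh\rho + (r-1)\coth\rho)\partial_\rho + \operatorname{csch}^2\rho \, \DS.$$
To match the statement of the lemma I would then pass to Cartesian $x$ via $|x| = \sinh\rho$, so $\partial_\rho = \cosh\rho \, \partial_{|x|}$. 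Combined with the polar decomposition $\Delta_x = \partial_{|x|}^2 + \frac{r-1}{|x|}\partial_{|x|} + \frac{1}{|x|^2}\DS$ and the identity $\sum_i x_i \partial_{x_i} = |x|\partial_{|x|}$ (the angular derivatives cancel by $\sum_i x_i \partial \Omega_j/\partial x_i = 0$), a short computation should rewrite $\Delta_x + (\sum x_i\partial_{x_i})^2 + (n-1)\sum x_i\partial_{x_i}$ as
$$\partial_\rho^2 + \bigl((n-1)\tanh\rho + (r-1)\operatorname{sech}\rho \operatorname{csch}\rho\bigr)\partial_\rho + \operatorname{csch}^2\rho \, \DS.$$
The two coefficient formulas then agree because $n=m+r$ and $\tanh\rho + (\sinh\rho\cosh\rho)^{-1} = \coth\rho$, which is just $\sinh^2\rho + 1 = \cosh^2\rho$ rearranged; finally $\operatorname{sech}^2\rho = 1/(1+|x|^2)$ delivers the claimed form of $\DN$.

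For the volume element, I would combine the warped-product density $\cosh^m\rho \, \sinh^{r-1}\rho \, dV_M \, d\rho \, d\Omega$ with the Cartesian polar identity $d^r x = |x|^{r-1} d|x| \, d\Omega = \sinh^{r-1}\rho \cdot \cosh\rho \, d\rho \, d\Omega$: the factor $\sinh^{r-1}\rho$ is absorbed into $d^r x$, one $\cosh\rho$ cancels against the $d\rho$-to-$d|x|$ change, and what remains is $(1+|x|^2)^{(m-1)/2} d^r x$ multiplied by $dV_M = \eta^{-m} d\eta \, d^{m-1} w'$, as asserted. I expect the only real obstacle in executing this plan to be bookkeeping: tracking the three dimension indices $m$, $r = n-m$, $n$ and keeping disentangled the contributions of the base direction along $M$, the radial normal direction, and the sphere of unit normals. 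Once the warped-product form of the metric is in hand, every subsequent step reduces to a direct mechanical calculation with hyperbolic-function identities.
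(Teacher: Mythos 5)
Your proof is correct, and it takes a genuinely different route from the paper's. The paper computes $\Delta_N$ by a direct chain-rule manipulation of the upper-half-space Laplacian: it writes $y\partial_y = \operatorname{sech}^2\rho\,\eta\partial_\eta - \tanh\rho\,\partial_\rho$ and $\partial_{|w''|} = \eta^{-1}(\tanh\rho\,\eta\partial_\eta + \partial_\rho)$, squares these expressions, adds the polar form of $\Delta_{w''}$, and simplifies. It never writes down the metric tensor in the new coordinates. You instead first exhibit the metric in equidistant-hypersurface (warped-product) form $\cosh^2\rho\,g_M + d\rho^2 + \sinh^2\rho\,g_{S^{r-1}}$ and then invoke the standard block-diagonal formula $\Delta = f^{-2}\Delta_M + \partial_\rho^2 + (m\,f'/f + (r-1)\,h'/h)\partial_\rho + h^{-2}\Delta_S$. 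This buys you two things: the intermediate radial coefficient $m\tanh\rho + (r-1)\coth\rho$ drops out of a single formula rather than a long squared-operator expansion (it matches the paper's $(n-1)\tanh\rho + (r-1)/(\sinh\rho\cosh\rho)$ since $\coth\rho = \tanh\rho + 1/(\sinh\rho\cosh\rho)$ and $n=m+r$), and the volume element is immediate from $\sqrt{|g|} = \cosh^m\rho\,\sinh^{r-1}\rho\,\sqrt{|g_M|}\,\sqrt{|g_{S^{r-1}}|}$, whereas the paper gives no details at all for the volume claim. Both proofs convert from $\rho$ to $|x|=\sinh\rho$ identically in the final step. One small wrinkle: your remark that the vanishing of the $d\eta\,d\rho$ cross terms follows from $\operatorname{sech}^2\rho + \tanh^2\rho = 1$ "and its derivative" is a slightly roundabout way of stating a direct cancellation ($-2\eta\operatorname{sech}^2\rho\tanh\rho$ from $dy^2$ against $+2\eta\operatorname{sech}^2\rho\tanh\rho$ from $|dw''|^2$); the identity itself is what you need for the $d\eta^2$ coefficient. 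This does not affect correctness.
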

\begin{proof}  We give only the main parts of the calculation and leave the intermediate steps to the reader.

We have 
\begin{align*}
\frac{\partial}{\partial y} =\frac{1}{\cosh\rho}\frac{\partial}{\partial\eta}-\frac{1}{y}\tanh\rho\frac{\partial}{\partial\rho}\,.
\end{align*}
Using $y=\frac{\eta}{\cosh\rho}$ gives
\begin{align*}
y\frac{\partial}{\partial y} =\frac{1}{\cosh^{2}\rho}H-\left(\tanh\rho\right)R
\end{align*}
where we have set $H=\eta\frac{\partial}{\partial\eta}$ and $R=\frac{\partial}{\partial\rho}$.
Similarly we have
\begin{align*}
\frac{\partial}{\partial |w''|} =\frac{1}{\eta}\left((\tanh\rho)H+R\right)\,.
\end{align*}
Now, 
\begin{align*}
\left(y\frac{\partial}{\partial y}\right)^{2} =\frac{1}{\cosh^{4}\rho}H^{2}+\frac{2\sinh^{2}\rho}{\cosh^{4}\rho}H-\frac{2\sinh\rho}{\cosh^{3}\rho}HR+\frac{\sinh^{2}\rho}{\cosh^{2}\rho}R^{2}+\frac{\sinh\rho}{\cosh^{3}\rho}R\,
\end{align*}
 and 
\begin{align*}
\left(\frac{\partial}{\partial |w''|}\right)^{2}  =\frac{1}{\eta^{2}}\left(\frac{\sinh^{2}\rho}{\cosh^{2}\rho}H^{2}+\frac{1-\sinh^{2}\rho}{\cosh^{2}\rho}H+\frac{2\sinh\rho}{\cosh\rho}HR+R^{2}-\frac{\sinh\rho}{\cosh\rho}R\right)\,.
\end{align*}
Finally, the Euclidean Laplace operator in polar coordinates reads 
$$\Delta_{w''}=\frac{\partial^{2}}{\left(\partial |w''|\right)^{2}}+\frac{r-1}{|w''|}\frac{\partial}{\partial |w''|}+\frac{1}{|w''|^{2}}\DS.$$ 

A tedious calculation then gives for the coordinate system
$\left(\eta,w';\rho,\Omega \right)$
\begin{align*}
\DN =\frac{1}{\cosh^{2}\rho}\DM+ F +\frac{1}{\sinh^{2}\rho}\DS\,,
\end{align*}
where
$$\DM =  \left(\eta\frac{\partial}{\partial\eta}\right)^2
       - (m-1)\left(\eta\frac{\partial}{\partial\eta}\right)
       + \eta^2 \Delta_{w'}$$
is the Laplace--Beltrami operator of $S_H$ and
$$F = R^2 + \left((n-1)\tanh\rho+\frac{r-1}{\sinh\rho\cosh\rho}\right)R\,.$$

We now replace $\left(\rho,\Omega \right)$ with $\left(|x|,\Omega \right)$.  Using
$\frac{\partial}{\partial \rho} = \sqrt{1+|x|^2} \frac{\partial}{\partial |x|}$
leads to
\begin{align*}
F + \frac{1}{\sinh^2 \rho} E 
 & = \Delta_x + \left(\sum_{i=1}^r x_i\frac{\partial}{\partial x_i}\right)^2 + (n-1)\left(\sum_{i=1}^r x_i \frac{\partial}{\partial x_i}\right) \\
\end{align*}
where $\Delta_x$ is the Euclidean Laplacian.  Here, we have used that $|x| \frac{\partial}{\partial |x|} = \sum_{i=1}^r x_i\frac{\partial}{\partial x_i}$ after changing from polar coordinates back to Cartesian.
\end{proof}

\section{Semiclassical preliminaries} \label{s:hprelim}
In this sections we recall the concepts and definitions from
semiclassical analysis required for the sequel.  Notations are drawn from
the monographs \cite{Zworski:SemiclassicalAnalysis}. 

\subsection{Pseudodifferential operators on a manifold}
Recall the standard classes of symbols on $\mathbb{R}^{2d}$
\begin{eqnarray} \label{symbolclass}
S^{m}(\mathbb{R}^{2d}) &\defeq& \{a \in \Ci(\mathbb{R}^{2d} \times (0, 1] ): |\partial^{\alpha}_x\partial^{\beta}_{\xi} a| \leq C_{\alpha,\beta} \langle\xi\rangle^{m-|\beta|} \}.
\end{eqnarray}
Symbols in this class can be quantized through the $\h$-Weyl quantization into
the following pseudodifferential operators acting on $u\in \cS(\R^d)$:
\begin{equation}
\OpWh(a)\,u(x)\defeq
\frac{1}{(2\pi\h)^d}\int_{\mathbb{R}^{2d}}e^{\frac{i}{\h}\langle x-y,\xi\rangle}\,a\big(\frac{x+y}{2},\xi;\h\big)\,u(y)dyd\xi\,.
\end{equation}
One can adapt this quantization procedure to the case of the phase space
$T^*M$, where $M$ is a smooth compact manifold of dimension $d$
(without boundary) by performing the Fourier analysis in local coordinates.

In more detail, cover $M$ with a finite set $(f_l,V_l)_{l=1,\ldots,L}$ 
of coordinate charts, where each $f_l$ is a smooth diffeomorphism from
$V_l\subset M$ to a bounded open set $W_l\subset\R^{d}$.
To each $f_l$ correspond a pullback
$f_l^*:\Ci(W_l)\rightarrow \Ci(V_l)$ and a symplectic
diffeomorphism $\tilde{f}_l$ from $T^*V_l$ to $T^*W_l$: 
$$
\tilde{f}_l:(x,\xi)\mapsto\left(f_l(x),(Df_l(x)^{-1})^T\xi\right).
$$
Choose a smooth partition of unity $(\phi_l)_l$ adapted to our cover.
That means $\sum_l\phi_l=1$ and $\phi_l\in \Ci(V_l)$.
Then, any observable $a$ in $\Ci(T^*M)$ can be decomposed as
$a=\sum_l a_l$, where $a_l=a\phi_l$. Each $a_l$ belongs to
$\Ci(T^*V_l)$ and can be identified with the function
$\tilde{a}_l=(\tilde{f}_l^{-1})^*a_l\in \Ci(T^*W_l)$.
We now define the class of symbols of order $m$ on $T^*M$
(slightly abusing notation by treating $(x, \xi)$ as coordinates on $T^*W_l$)
via
\begin{eqnarray}
\label{pdodef}  S^{m}(T^*M)  &\defeq& \{a \in \Ci(T^*M \times (0, 1] ): a=\sum_l a_{l},\ \  \text{ such that } \\ 
\nonumber && \tilde{a}_l\in S^m(\R^{2d})\quad\text{for each }l\}.
\end{eqnarray}
This class is independent of the choice of cover and partition of unity. 
For any $a\in S^{m}(T^{*}M)$, one can associate to each component
$\tilde{a}_l\in S^{m}(\mathbb{R}^{2d})$ its Weyl quantization
$\OpWh(\tilde{a}_l)$, which acts on functions on $\R^{2d}$.
To get back to operators acting on $M$, we choose smooth cutoffs
$\psi_l\in \Cic(V_l)$ such that $\psi_l=1$ close to the support of
$\phi_l$, and set
\begin{equation}
\label{pdomanifold}\Oph(a)u \defeq 
\sum_l \psi_l\times\left(f_l^*\OpWh(\tilde{a}_l)(f_l^{-1})^*\right)\left(\psi_l\times u\right),\quad u\in \Ci(M)\,.
\end{equation}
This quantization procedure maps (modulo smoothing operators with seminorms $\mathcal{O}(\hbar^{\infty})$) symbols $a\in S^{m}(T^{*}M)$ onto the space $\Psi^{m}_\hbar(M)$ of semiclassical pseudodifferential 
operators of order $m$. The dependence in the cutoffs $\phi_l$ and $\psi_l$ only appears at order 
$\hbar\Psi^{m-1}_\hbar$ (Theorem 18.1.17 \cite{Hormander:PD_Ops_III} or Theorem 9.10 \cite{Zworski:SemiclassicalAnalysis}), so that the principal symbol map $\sigma_0:\Psi^{m}_\hbar(M)\rightarrow S^{m}(T^*M)/\h S^{m-1}(T^*M)$ is 
intrinsically defined. Most of microlocal calculus on $\R^d$
(for example the composition of operators, the Egorov and
Calder\'on-Vaillancourt Theorems) then extends to the manifold case.

An important example of a pseudodifferential operator is the
semiclassical Laplace--Beltrami operator $P(\h)=-\frac{\h^2}{2} \Delta_g$.
In local coordinates $(x; \xi)$ on $T^*M$, the operator can be expressed as
$\OpWh \big( |\xi|^2_g + \h (\sum_j b_j(x) \xi_j + c(x)) + \h^2 d(x) \big)$
for some functions $b_j,c,d$ on $M$.  In particular, its semiclassical
principal symbol is the function $|\xi |^2_g \in S^{2}(T^*M)$.
Similarly, the principal symbol of the Schr\"odinger operator 
$-\frac{\h^2}{2} \Delta_g + V(x)$ (with $V\in \Ci(M)$) is
$|\xi |^2_g + V(x) \in S^{2}(T^*M)$.

We will need the slightly more general class of symbols used in
\cite{DimassiSjostrand:SpecAsymp} (in fact, its adapation to $T^*M$ via
the partition of unity argument above).
In Euclidean space this is the class given for any $0 \leq \delta < 1/2$ by
\begin{eqnarray} \label{singsymbolclass}
S^{m}_{\delta}(\mathbb{R}^{2d}) &\defeq& \{a \in \Ci(\mathbb{R}^{2d} \times (0, 1] ): |\partial^{\alpha}_x\partial^{\beta}_{\xi} a| \leq C_{\alpha,\beta} \h^{-\delta|\alpha + \beta|}\langle\xi\rangle^{m-|\beta|} \}.
\end{eqnarray}
These symbols are allowed to oscillate more strongly when $\h\to 0$.
All the previous remarks regarding the case of $\delta=0$ transfer over in a straightforward manner.

\section{Ansatz and Quantum Birkhoff Normal Forms}

Let us remind ourselves that the variable $x \in \R^r$ is transverse to our submanifold $M$ and that our construction will be a quasimode which localizes in this variable.

\subsection{Separation of variables and the derivation of a hyperbolic operator} \label{s:sepvar}
At this point, we must fix our Fermi collar neighborhood (\ref{c:collar})
and therefore fix its corresponding width $\epsilon_1 > 0$
(the subscript is intentional, for reasons to be explained later).
In spite of our construction being entirely local, we will later show that
it extends globally therefore making our norm estimates and spectral-width
estimates independent of the chosen neighborhood $N_{\epsilon_1}(M)$,
after choosing our spectral parameters $\h \leq \h_0(M, \epsilon_1)$

For the problem of finding sufficiently thin spectral-width quasimodes
whose semiclassical measures place mass on the energy surface $S_{E_0}M$,
we consider the following natural ansatz in our coordinate system of
Lemma \ref{l:simp-op-vol}:
\bequ \label{e:preansatz}
\bPsih(\eta, w', x) = \Psih(x) \phih(\eta, w')\,.
\eequ

For the moment we choose $\phih$ to be an exact eigenfunction of $\DM$,
in that $-\h^2 \DM\phih = E_0 \phih$ for some $E_0 > 0$; hence, the problem
is to then choose $\psih(x)$ appropriately.  Our energies will have the form $\Eh = E_0 + f(\h)$, where $f(\h) = \cO(\h)$, so we need to
consider the quantity
\bequ
\left(-\h^2\DN- \Eh\right)\bPsih(\eta, w', x) = 0
\eequ
whilst keeping in mind the operator from Lemma \ref{l:simp-op-vol}.
The above expression then takes the form
\bequ
\left[ -\h^2 \left(\Delta_x + \left(\sum_{i=1}^r x_i\frac{\partial}{\partial x_i}\right)^2 + (n-1)\left(\sum_{i=1}^r x_i \frac{\partial}{\partial x_i}\right)\right) - \frac{E_0|x|^2}{1+|x|^2} - f(h) \right] \Psih(x) \phih(\eta, w') = 0.
\eequ
We therefore isolate the $x$-variable operator
\bequ \label{e:hyperbolic-op}
K_x(\h) \defeq -\h^2\left( \Delta_x + \left(\sum_{i=1}^r x_i\frac{\partial}{\partial x_i}\right)^2 + (n-1)\left(\sum_{i=1}^r x_i \frac{\partial}{\partial x_i}\right)\right) - \frac{E_0|x|^2}{1+|x|^2} - f(\h).
\eequ

The factorization of the volume element in Lemma~\ref{l:simp-op-vol}
shows we should consider $K_x(\h)$ acting on
$L^2\left(\R^r, (1+|x|^2)^{\frac{m-1}{2}} dx\right)$, where it will be
a formally symmetric operator (on smooth compactly supported functions)
by virtue of $\DN$ being symmetric.

\begin{lem}\label{l:ansatz-separates}
Suppose that $\{\phih\}_\h$ are $L^2(M)$-normalized quasimodes for $-\h^2\DM$,
with central energy $E_0$
and of width $\frac{c_1 \h}{\abs{\log\h}}$,
converging to the semiclassical measure $\musc$ on $S_{E_0}M$.
Suppose that $\{\Psih\}_\h$ are $L^2 \left(\R^r, (1+|x|^2)^{(m-1)/2} \right)$-normalized quasimodes for $K_x(\h)$ supported
in the collar $B(0,\epsilon_1)$
with central energy $0$ and of width $\frac{c_2 \h}{\abs{\log\h}}$, 
converging to a semiclassical measure $\sigma_{sc}$ on $T^*(\R^r)$ such that
$\sigma_{sc} \geq \nu \delta_0$; in other words, $\sigma$ gives mass at
least $\nu \in(0,1]$ to the point $(0;0)$.

Then $\{\bPsih\}_\h$ are normalized quasimodes for $-\h^2\DN$
with central energies
$\Eh = E_0 + f(\h)$ of width $\frac{(c_1+c_2) \h}{\abs{\log\h}}$, converging to a
semiclassical measure $\tilde{\mu}_{sc}$ giving mass at least $\nu$ to
$\iota_{*}\musc$ where $\iota\colon M\to N$ is the inclusion map.  In particular when $\nu = 1$ the quantum limit is exactly
$\iota_{*}\musc$.
\end{lem}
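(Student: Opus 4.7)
I would treat the three conclusions of the lemma in order, using the product structure of the ansatz $\bPsih = \Psih\phih$ together with the Fermi-coordinate decomposition of $\DN$ and of the volume form established in Lemma~\ref{l:simp-op-vol}.

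For normalization the factored volume element yields
\[
\|\bPsih\|_{L^2(N)}^2 = \|\phih\|_{L^2(M)}^2 \cdot \|\Psih\|_{L^2(\R^r,(1+|x|^2)^{(m-1)/2}dx)}^2 = 1,
\]
and the support assumption on $\Psih$ allows $\bPsih$ to be extended by zero to all of $N$. For the quasimode width I would rewrite Lemma~\ref{l:simp-op-vol} and \eqref{e:hyperbolic-op} as the operator identity
\[
-\h^2\DN = -\frac{\h^2}{1+|x|^2}\DM + K_x(\h) + \frac{E_0|x|^2}{1+|x|^2} + f(\h),
\]
apply it to $\bPsih$, and subtract $\Eh = E_0 + f(\h)$. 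The $f(\h)$ terms cancel and the algebraic simplification $\frac{E_0|x|^2}{1+|x|^2} - E_0 = -\frac{E_0}{1+|x|^2}$ produces the clean splitting
\[
(-\h^2\DN - \Eh)\bPsih = \frac{\Psih}{1+|x|^2}(-\h^2\DM - E_0)\phih + \phih\cdot K_x(\h)\Psih.
\]
Taking $L^2(N)$-norms, using the factored volume element, and bounding $(1+|x|^2)^{-2}\leq 1$, each summand splits as a product of norms on $M$ and on $(\R^r,(1+|x|^2)^{(m-1)/2}dx)$, with upper bounds $c_1\h/|\log\h|$ and $c_2\h/|\log\h|$ respectively. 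The triangle inequality then produces the stated width $(c_1+c_2)\h/|\log\h|$.

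For the semiclassical-limit statement I would work in the Fermi chart and exploit the fact that the microlocal calculus on $N$ restricted to the collar essentially separates into the calculi on $T^*M$ and on $T^*\R^r$. For observables of tensor-product form $a(y,x;y^*,x^*) = b(y;y^*)c(x;x^*)$ (with $y = (\eta,w')$) supported in the collar, the composition formula of Section~\ref{s:hprelim} gives
\[
\Oph(a) = \Oph(b)\otimes\OpWh(c) + \cO_{L^2\to L^2}(\h),
\]
so that pairing with $\bPsih = \Psih\phih$ factorizes and $\la\bPsih,\Oph(a)\bPsih\ra \to \mu_{sc}(b)\cdot\sigma_{sc}(c)$. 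For $c\geq 0$ the hypothesis $\sigma_{sc}\geq\nu\delta_{(0,0)}$ gives $\sigma_{sc}(c)\geq\nu c(0,0)$; since $b(y,y^*)c(0,0) = (a\circ\iota)(y,y^*)$, this yields $\tilde{\mu}_{sc}(a)\geq\nu\,\iota_*\mu_{sc}(a)$ on non-negative tensor-product symbols. A standard density argument (tensor products are dense in $\Cic$ observables supported on the collar by Stone--Weierstrass, combined with sharp Gårding to transfer positivity) extends the inequality to all non-negative $a\in\Cic(T^*N)$, yielding $\tilde{\mu}_{sc}\geq\nu\,\iota_*\mu_{sc}$ as measures. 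When $\nu = 1$, $\sigma_{sc}$ is forced to equal $\delta_{(0,0)}$ (being a probability measure on the energy surface $\{p=0\}$ concentrated at the hyperbolic fixed point), so the inequality collapses to the equality $\tilde{\mu}_{sc} = \iota_*\mu_{sc}$.

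The main technical obstacle I anticipate is the third step: establishing the tensor factorization of $\Oph(a)$ uniformly on symbols supported in the collar, and checking compatibility between the weighted $L^2$-measure $(1+|x|^2)^{(m-1)/2}dx$ on $\R^r$ (used to define $\sigma_{sc}$) and the standard Weyl calculus. Both reduce to routine bookkeeping since the Jacobian weight is smooth and bounded above and below on the support of $\Psih$, and since the Fermi chart is fixed, following the template of \cite[Sec.\ 6]{EswarathasanNonnenmacher:QuasimodeScarringSurfaces_preprint} adapted to arbitrary transverse dimension.
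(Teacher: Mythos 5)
Your proposal is correct and follows the same route as the paper's proof: the quasimode-width bound via the operator splitting
\[
(-\h^2\DN - \Eh)\bPsih = \frac{\Psih}{1+|x|^2}(-\h^2\DM - E_0)\phih + \phih\cdot K_x(\h)\Psih,
\]
the pointwise bound $(1+|x|^2)^{-1}\leq 1$, and Fubini on the factored volume form. The one genuine difference is that the paper dispenses with the measure identification in a single sentence (``the identification of the limit is clear'' from the product structure of the collar), while you spell it out via tensor-product observables and density; your written splitting also correctly carries $E_0$ rather than $\Eh$ in the $\DM$ term, matching the hypothesis on $\phih$. Your appeal to sharp G\r{a}rding in the density step is unnecessary overhead — once $\tilde\mu_{sc}=\mu_{sc}\otimes\sigma_{sc}$ is established on the dense set of tensor-product test functions, measure equality follows directly and the inequality $\sigma_{sc}\geq\nu\delta_0$ passes through — but this does not affect correctness.
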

\begin{proof}
Since our collar $N_{\epsilon_1}(M)$ factors as a product
$B(0,\epsilon_1)\times M$ in the coordinate system, the identification of
the limit is clear.  It remains to verify that $\{\bPsih\}_\h$ are indeed
quasimodes of the stated width. Following the calculation above and Lemma \ref{l:simp-op-vol}, we have:
\begin{eqnarray}
&&\left(-\h^2\DN - \Eh\right)\bPsih = \left( \frac{-1}{1+\abs{x}^2} \h^2\DM + K_x(\h) + \frac{E_0 |x|^2}{1 + |x|^2} \right) \bPsih  \\
&&  =  \left(K_x(\hbar)\Psih\right)(x) \phih(\eta,w') +
  \Psih(x) \left( \frac{1}{1+\abs{x}^2} \right)\left(-\h^2\DM-\Eh\right)\phih(\eta, w').
\end{eqnarray}

Now
\begin{equation}
\norm{K_x(\hbar)\Psih }_{L^2\left(\R^r, (1+|x|^2)^{\frac{m-1}{2}} dx\right)} \leq \frac{c_2 \h}{\abs{\log\h}},
\end{equation}
\begin{equation}
\norm{\frac{1}{1+\abs{x}^2}\Psih(u)}_{L^2\left(\R^r, (1+|x|^2)^{\frac{m-1}{2}} dx\right)} \leq \norm{\Psih(x)}_{L^2\left(\R^r, (1+|x|^2)^{\frac{m-1}{2}} dx\right)} = 1\,,
\end{equation}
and
\begin{eqnarray}
&& \norm{\frac{1}{1+\abs{x}^2}\left(-\h^2\DM-\Eh\right)\phih(\eta, w') \Psih}_{L^2(N)} = \\
&& \norm{\frac{1}{1+\abs{x}^2} \Psih }_{L^2\left(\R^r, (1+|x|^2)^{(m-1)/2}\right)} \norm{\left(-\h^2\DM-\Eh\right)\phih(\eta, w') }_{L^2(M)} \leq \frac{c_1 \h}{\abs{\log\h}},
\end{eqnarray}
together giving the claim after applying Fubini's theorem thanks to the product form of the metric in Lemma \ref{l:simp-op-vol}.
\end{proof}

For the rest of the paper we will then construct transverse-to-$M$
quasimodes $\Psih$ of central energy $0$ for the operator
$K_x(\h)$, such that the quasimodes microlocalize to the delta measure
$\delta_0(x)$ or at least contains this measure.

Writing $\xi$ for the variable dual to $x$ on $\R^r$ and using the standard quantization, the total symbol
of $K_x(\h)$ as an $\h$-pseudodifferential operator
(see Section~\ref{s:hprelim}) is
\bequ
|\xi|^2 + \left(x\cdot\xi\right)^2 - n\h\left(x\cdot i\, \xi\right) - \frac{E_0|x|^2}{1+|x|^2} - f(\h)
\eequ
hence giving a semiclassical principal symbol of 
\bequ \label{prenormalsymbol}
\sigma(x;\xi) = |\xi|^2 + \left(x\cdot \xi\right)^2 - \frac{E_0|x|^2}{1+|x|^2};
\eequ
the reader is referred to \cite{Zworski:SemiclassicalAnalysis}
for further details on the symbol map.  Observe that $\sigma$ differs
from $\tilde{\sigma} := |\xi|^2 - E_0|x|^2$, the principal symbol for the
inverted harmonic oscillator, by terms of order $\cO(|x| + |\xi|)^4$
when $(x;\xi)$ is sufficiently close to $(0;0)$.
It follows that that $\sigma$ retains the non-degenerate critical
point of $\tilde{\sigma}$ at $(0;0)$.  The expression $\tilde{\sigma}$ is
a split quadratic form with eigenvalues $\{ \pm \mu_i \}_{i=1}^r$ where
$\mu_i > 0$.  A linear change of variables along with
$\xi \rightarrow \sqrt{E_0} \xi$ transforms $\tilde{\sigma}$ to
$\sum_{i=1}^r 2 \sqrt{E_0} \, x_i \xi_i$ whose Hamiltonian flow is the
model for a hyperbolic fixed point.

We will obtain our quasimodes $\Psih$ from the following ``quantized" version
of a classical Birkhoff normal form (see the beautiful text
\cite{AbrahamMarsden:FoundMechanics} by Abraham-Marsden for more on this topic) specifically
due to Iantchenko \cite{Iantchenko:BirhoffNormalForm98} but inspired by
the work of Sj\"ostrand \cite{Sjostrand:Resonances2D} in the context of
Hamiltonians with non-degenerate minima.  We state it in a slightly
different fashion which is more adapted to our upcoming calculations:

\begin{prop} \label{p:qbnf}
Let $P(\h)$ be a formally self-adjoint pseudodifferential operator, or rather a symmetric operator, acting on $\Cic(\R^r)$ with a $L^2(\R^r, \mu)$ structure for some positive measure $\mu$.  Consider its real Weyl symbol $\sum_{j=0}^{\infty} \h^j p_j$.  Suppose further that its semiclassical
principal symbol $p_0$ has a non-degenerate critical point and defines a
split quadratic form.  That is $p(0;0)=0, dp(0;0)=0,$ and $p''(0;0)$ is
non-degenerate but whose eigenvalues are
$\{ \pm \frac{\lambda_i}{2} \}_{i=1}^r$ where $\lambda_i > 0$.
Set $\overrightarrow{\lambda} \defeq (\lambda_1,\dots, \lambda_r)$.

For $\tN>0$ given, we can find neighborhoods $U, V$ of $(0;0)$ in
$T^*\R^r$ and a canonical transformation $\kappa:U \rightarrow V$, with
$\kappa(0;0)=(0;0)$, such that 
\bequ
\left( \sum_{j=0}^{\tN} \h^j p_j \right) \circ \kappa = q^{(\tN)} + r_{N+1} = \sum_{j=0}^{\tN} \h^j q_j(x;\xi)+ r_{\tN+1}(x;\xi;\h),
\eequ
where 
\bequ
q_0 = \sum_{i=1}^r \lambda_i x_i \xi_i + \sum_{l=2}^{\tN}   \left( \sum_{\alpha - \beta \in \cM, |\alpha| + |\beta| = l} c_{\alpha, \beta} x^{\alpha} \xi^{\beta} \right).
\eequ  
Here $\cM = \{\gamma \in \N^n :  \overrightarrow{\lambda} \cdot \gamma = 0 \}$ is the so-called resonance module for the Hamiltonian $q_{0,1}=\sum_i \lambda_i x_i \xi_i$ and $r_{\tN+1}(x;\xi;\h) = \cO\big((\h+|x| + |\xi|)^{\tN+1}\big)$.  The other terms $q_j$ have a similar form and for each $j$, $H_{q_{0,1}}q_j = 0$.  Here, $H_{q_{0,1}}$ is the Hamiltonian vector field associated to the function $q_{0,1}$ and takes the form $x \cdot \frac{\partial}{\partial x} - \xi \cdot \frac{\partial}{\partial \xi}$.

Moreover, we have a corresponding quantum Birkhoff normal form.  That is, for the given $\tN$ and $U,V$ as above there exists a microlocally unitary semiclassical Fourier integral operator $U(h): \Cic(\R^r) \rightarrow \mathcal{S}(\R^r)$ near $(0;0)$.  In other words, with respect to the given $L^2$ structure,
$\norm{U(\h)}_{L^2} = \norm{\chi_2(x; \h D_x) U(\h) \chi_1(x; \h D_x)}_{L^2}
  + \cO_{L^2}(\h^{\infty})$
for all microlocal cutoffs $\chi_1, \chi_2$ supported within $U$ and $V$
respectively and
\bequ
U_{\tN}^*(\h) P(\h) U_{\tN}(\h) = Q^{(\tN)}(\h) + R^{(\tN+1)}(\h).
\eequ
Here, $Q^{(\tN)}(\h)$ and $R^{(\tN+1)}(\h)$ are semiclassical pseudodifferential
operators with microlocal support in the indicated neighborhoods above with
respective Weyl symbols $q^{(\tN)}$ and $r_{\tN+1}$ where
$[ \OpWh \left(\sum_i \lambda_i x_i \xi_i\right), Q^{(\tN)}(\h) ] = 0.$

\end{prop}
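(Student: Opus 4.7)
The plan is to execute the classical Birkhoff normal form algorithm simultaneously at the symbol and the operator level, step by step removing the part of each homogeneous piece that does not lie in $\ker H_{q_{0,1}}$. Using a weighted-degree filtration that treats factors of $\h$, $x_i$, and $\xi_i$ uniformly (so that the stated remainder $r_{\tN+1} = \cO((\h+|x|+|\xi|)^{\tN+1})$ captures precisely the tail beyond weighted degree $\tN$), I will induct on this degree.

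First I would handle the quadratic part: since $p_0''(0;0)$ is a non-degenerate split real quadratic form with eigenvalues $\{\pm \lambda_i/2\}_{i=1}^r$, a linear symplectic change of variables (the split hyperbolic analogue of Williamson's theorem, obtained from the eigendecomposition of the associated Hamiltonian matrix) brings $p_0$ into $q_{0,1} + \cO(|x|+|\xi|)^3$; at the operator level the corresponding intertwiner is an exact metaplectic unitary. The inductive hypothesis is that after $k-1$ steps a canonical transformation $\kappa_{k-1}$ and a microlocally unitary FIO $U_{k-1}(\h)$ have been constructed with
\[
U_{k-1}(\h)^* P(\h) U_{k-1}(\h) = \OpWh(q_{0,1}) + \sum_{l=3}^{k-1} \OpWh(g_l) + R_{k-1}(\h),
\]
each $g_l$ weighted-homogeneous of degree $l$ with $\{q_{0,1}, g_l\}=0$, and $R_{k-1}$ of weighted order at least $k$; the analogous statement holds for the pullback of $\sum_j \h^j p_j$ by $\kappa_{k-1}$. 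At step $k$, let $g_k$ denote the leading homogeneous component of the remainder. Since
\[
\{q_{0,1}, x^\alpha \xi^\beta\} = \overrightarrow{\lambda}\cdot(\beta-\alpha)\, x^\alpha \xi^\beta,
\]
$H_{q_{0,1}}$ is diagonal on monomials with kernel spanned by $\{x^\alpha\xi^\beta : \alpha-\beta \in \cM\}$. Decomposing $g_k = g_k^{\mathrm{res}} + g_k^{\mathrm{nonres}}$, I solve the cohomological equation $\{q_{0,1}, F_k\} = -g_k^{\mathrm{nonres}}$ by dividing each non-resonant monomial by its non-zero eigenvalue. Updating by $\kappa_k = \kappa_{k-1} \circ \phi^1_{F_k}$ classically and $U_k(\h) = U_{k-1}(\h)\exp\bigl(\tfrac{i}{\h}\OpWh(F_k)\bigr)$ quantum-mechanically absorbs $g_k^{\mathrm{nonres}}$ into the conjugation, retains $g_k^{\mathrm{res}}$ as the new normal-form contribution, and pushes fresh corrections into weighted order at least $k+1$.

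The quantum step uses that $\exp\bigl(\tfrac{i}{\h}\OpWh(F_k)\bigr)$ is a microlocally unitary semiclassical FIO (as $F_k$ is real) quantizing the time-$1$ flow of $H_{F_k}$ near $(0;0)$; Egorov's theorem then expands the conjugated symbol as (old symbol)$\,\circ\phi^1_{F_k}$ modulo terms of strictly higher weighted order. The announced commutation $[\OpWh(q_{0,1}), Q^{(\tN)}(\h)] = 0$ follows automatically from the classical identity $\{q_{0,1}, q^{(\tN)}\}=0$, because $q_{0,1}$ is quadratic and Weyl quantization obeys the exact Moyal identity $[\OpWh(q_{0,1}), \OpWh(g)] = \tfrac{\h}{i}\OpWh(\{q_{0,1}, g\})$ with no higher $\h$-correction. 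I expect the main obstacle to lie in the bookkeeping at each iteration, namely verifying that the remainder really drops by one weighted order per step, controlling the successive Moyal expansions, and confirming that all operators remain microlocalized in the neighborhoods $U,V$ of $(0;0)$ throughout. These technical details follow the template of Sj\"ostrand \cite{Sjostrand:Resonances2D} and are carried out for this precise setting in Iantchenko \cite{Iantchenko:BirhoffNormalForm98}.
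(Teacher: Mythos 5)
The paper offers no proof of Proposition~\ref{p:qbnf}: it is imported directly from Iantchenko \cite{Iantchenko:BirhoffNormalForm98} (following Sj\"ostrand \cite{Sjostrand:Resonances2D}), restated in a form adapted to the paper's use. Your sketch is an accurate outline of the iterative argument those references carry out: linear metaplectic normalization of the split quadratic part, the cohomological equation $\{q_{0,1}, F_k\} = -g_k^{\mathrm{nonres}}$ solved monomial-by-monomial using that $H_{q_{0,1}}$ is diagonal on monomials, conjugation by $\exp\bigl(\tfrac{i}{\h}\OpWh(F_k)\bigr)$ with Egorov to control the conjugated symbol, and the exact Moyal identity for quadratic $q_{0,1}$ to upgrade $\{q_{0,1}, q^{(\tN)}\}=0$ to the operator commutation $[\OpWh(q_{0,1}),Q^{(\tN)}]=0$. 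You also correctly identify the weighted filtration (treating $\h$, $x_i$, $\xi_i$ as degree one) needed to match the stated remainder bound $\cO\bigl((\h+|x|+|\xi|)^{\tN+1}\bigr)$, and you defer exactly the same bookkeeping details to the cited sources that the paper itself defers. Two very small remarks on the statement rather than your sketch: the resonance module should be read as a subset of $\Z^r$ (not $\N^n$ as printed), since $\alpha-\beta$ need not be non-negative; and the sign you write for $\{q_{0,1}, x^\alpha\xi^\beta\}$ depends on the Poisson-bracket convention, which does not affect the kernel or the solvability of the cohomological equation.
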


\begin{rem}
In our setting where the energy $E = E_0$, the corresponding fixed point $(0;0)$ for our pre-normal form symbol in (\ref{prenormalsymbol}) yields expansion rates $\lambda_i = 2 \sqrt{E_0}$ after the discussed scaling.  For the sake of simplicity within our upcoming calculations and as the maximal Lyapunov exponent $\tilde{\lambda}_{E_0}$ on the energy shell $S^*_{E_0}N$ scales as $\sqrt{E_0} \tilde{\lambda}_1$, we consider the case of $\sqrt{E_0}=1$ for the remainder of our article.  We make note that the expansion rates transverse to $M$ are equal across the entire ambient manifold $N$, which is a feature of the constant curvature setting.
\end{rem}

\section{Propagation of a Gaussian wavepacket at the Hyperbolic Fixed Point}
\label{sec:propagate-wavepacket}

We begin this section with the following important remark:

\begin{rem}
This Section gives the higher-dimensional analogues of the results in
Section 5 of \cite{EswarathasanNonnenmacher:QuasimodeScarringSurfaces_preprint}.
The arguments are very similar, but we give considerable detail for
two reasons.  First, while the 
$r$-dimensional inverted harmonic oscillator obviously factors into a
product of $r$ one-dimensional oscillators (for initial conditions which
factor), the quantum Birkhoff normal form includes additional error terms,
and we need to track their behavior after conjugating by the semiclassical FIO in Proposition \ref{p:qbnf}.
Second, the argument for Corollary~\ref{c:sigmund} differs from previous
quasimode constructions in that we vary the submanifold $M$. 
Accordingly we must ensure that the spectral width, particularly the parameter $C>0$, of our quasimodes
does not depend on the submanifold $M$.
\end{rem}

\subsection{Ground states, squeezed states, and evolution}

\subsubsection{Preparing the Hamiltonian}
Consider the Gaussian
\begin{equation}\label{e:initial}
\Phi_0(x)\defeq \frac{1}{(\pi \h)^{r/4}}\, \prod_{i=1}^r \exp\big(-\frac{x_i^2}{2\h}\big)
\end{equation}
which is the $L^2$-normalized ground state of the harmonic oscillator on
$\R^r$ and has width $\h^{1/2}$ in each direction $x_i$.
We will evolve $\Phi_0$ through the reduced time-dependent
Schr\"odinger equation generated by the QNF operator $Q^{(\tN)}$ described in
Proposition~\ref{p:qbnf}, i.e. given the problem
\begin{equation}
 \begin{cases}
 i\h \partial_t \Phi^{(\tN)}_t &= Q^{(\tN)}(\h) \,\Phi^{(\tN)}_t, \, \text {where }  Q^{(\tN)} =\OpWh(q^{(\tN)}) \\
 \Phi^{(\tN)}_{t | t=0} &= \Phi_0,
\end{cases}
\end{equation}
we must analyze the evolution of $\Phi_0(x)$.  Therefore, our goal in this section is to describe the states
$$
\Phi^{(\tN)}_t = e^{-iQ^{(\tN)}t/\h}\,\Phi_0 \quad \text{for times }|t|\leq C(\tilde{\lambda}_1, \epsilon_2) \,|\log\h|
$$
where $C>0$ will be an explicit constant depending only on the maximal Lyapunov exponent $\tilde{\lambda}_1$ of $M$, in this case being $2$, and a small parameter $\epsilon_2$.  The index $\tN$ will be chosen later.

The Weyl symbol of $Q^{(\tN)}$ arising from Proposition \ref{p:qbnf} can be rewritten as
\begin{equation}
q^{(\tN)}(x, \xi; h) = \left(\sum_{i=1}^r q_{1,i}(h) x_i\xi_i \right) + \sum_{j=2}^{\tN} \left( \sum_{|\alpha|=|\beta|=j} q_{j}^{\alpha, \beta}(\h) x^\alpha \xi^\beta \right) \,,
\end{equation}
where the coefficients $q_{j}^{\alpha, \beta}(\h)$ expand like
$$
q_{j}^{\alpha, \beta}(\h)=\sum_{i=0}^{\tN} \h^i\,q_{j,i}^{\alpha,\beta}, \,\, \text{ for } q_{j,i}^{\alpha,\beta} \text{ a constant}
$$
for $j \geq 2$.  Moreover, for $j=1$, $q_{1,i}(h) = \lambda_i + \cO(h)$.  Here, we have used that $\lambda_i = 2$ for all $i$ and thus that the resonance
module is $\cM = \{\gamma \in \N^r :  \sum_i \gamma_i = 0 \} = \{0\}$.
The top order terms of the coefficients $q_{1,i}$ will play an important role in the
Schr\"odinger evolution around the unstable fixed point $(0;0)$.
In order to make the following analysis more transparent, we will
truncate our Weyl symbol $q^{(\tN)}$ to its top order quadratic terms and set
\begin{equation}
q_\qq = \sum_{i=1}^r \lambda_i x_i\xi_i \,.
\end{equation}
We label the operator whose Weyl symbol is $q_\qq$ as $Q_\qq(h)$.  Notice that this operator and its full symbol are \emph{independent of $\tN$}.

Recall from Proposition \ref{p:qbnf} that $\overrightarrow{\lambda}=(\lambda_1,\dots,\lambda_r)$.  The Schr\"odinger propagator for this special quadratic operator is easily expressed as
\begin{equation}\label{e:quad2}
U_\qq(t) \Phi_0 := \exp\big(-\frac{it}{\h} Q_\qq \big) \Phi_0 = \cD_{t \overrightarrow{\lambda}}\Phi_0\,, 
\end{equation}
where the unitary dilation operator $\cD_{\overrightarrow{\lambda}}: L^2(\R^r)\to L^2(\R^r)$ is given by
\begin{equation}
\cD_{\overrightarrow{\lambda}} u (x)\defeq \exp \big(-i \big[\sum_i \lambda_i \OpWh(x_i\xi_i) \big] /\h \big) u (x) = e^{-\sum_i\lambda_i/2} u (e^{-\lambda_1} x_1,\dots, e^{-\lambda_r} x_r)\,.
\end{equation}
The states $\cD_{\overrightarrow{\lambda}} \Phi_0(x)$
and their generalizations are known as \emph{squeezed states}.
They naturally appear in related problems.  Two applications include
the Gutzwiller trace formula \cite{CombescureRobert:SemiclassicalSpreading}
and the pioneering work of Babich--Lazutkin \cite{Lazutkin:KAM} on the
construction of quasimodes concentrating on closed elliptic geodesics.
For more applications of these special states see
\cite{CombescureRobert:SemiclassicalSpreading} and the references therein.

For $\tN\geq 2$, the operator $Q^{(\tN)}$ includes a nonquadratic Hamiltonian $Q^{(\tN)}_\nq$ with its Weyl symbol taking the form
\bequ\label{e:q_nq}
q^{(\tN)}_\nq\defeq q^{(\tN)}-\left(\sum_{i=1}^r q_{1,i}(h) x_i\xi_i \right) = \sum_{j=2}^{\tN} \big( \sum_{|\alpha|=|\beta|=j} q_{j}^{\alpha, \beta}(\h) x^\alpha \xi^\beta \big)\,
\eequ
and 
\begin{equation}
\tilde{q}_\qq = \left(\sum_{i=1}^r q_{1,i}(h) x_i\xi_i \right) - q_\qq,
\end{equation}
whose Weyl quantization we label as $\tilde{Q}_\qq$.  Note that as a symbol, $\tilde{q}_\qq = \mathcal{O}(\h)$ and that 
\begin{equation}
q^{(\tN)} = q_\qq + \tilde{q}_\qq + q^{(\tN)}_\nq.
\end{equation}  
Unfortunately, the second and third terms above cannot be ignored and a large part of our
construction will be in analyzing their contributions to the evolution of
$\Phi_0$.  However, the resonance condition given in
Proposition \ref{p:qbnf} greatly simplifies the actions of $\tilde{Q}_\qq$ and $Q^{(\tN)}_\nq$ as it ensures that this operator commutes with $Q_\qq$.


\medskip

\subsubsection{Squeezed excited states} We now recall some basic facts concerning the standard $r$-dimensional quantum harmonic oscillator $\sum_i (\h D_{x_i})^2+X_i^2$ where $X_i$ is the operator which multiplies by $x_i$.  Our initial state $\Phi_0 = \prod_{i=1}^r\varphi_0(x_i)$ is also the ground state of this operator, where the individual functions $\varphi_0(x_i) = \frac{1}{(\pi\h)^{1/4}} e^{-\frac{x_i^2}{2\h}}$ are themselves ground states of the 1-dimensional quantum harmonic oscillators $(\h D_{x_i})^2 + X_i^2$.

Let us call $(\varphi_m)_{m\geq 1}$ the 1-dimensional $m$-th excited states in the variable $x_i$, which are obtained by iteratively applying to $\phi_0$ the ``raising operator'' $a_i^*\defeq \OpWh(\frac{x_i-i\xi_i}{\sqrt{2\h}})$ and $L^2$-normalizing: 
\bequ\label{e:excited}
\varphi_m = \frac{(a^*)^m}{\sqrt{m!}} \varphi_0 \Longrightarrow \varphi_m(x_i) = \frac{1}{(\pi \h)^{1/4}2^{m/2}\sqrt{m!}}\,H_m(x_i/\h^{1/2})\, e^{-\frac{x_i^2}{2\h}}\,,
\eequ
where $H_m(\cdot)$ is the $m$-th Hermite polynomial.  We also have the dual
lowering operators $a_i \defeq \OpWh(\frac{x_i+i\xi_i}{\sqrt{2\h}})$
which satisfy the similar relation $a_i \phi_m(x_i) = \sqrt{m} \phi_{m-1}(x_i)$.

Now, given a $r$-vector of $1$-dimensional excited states
$(\varphi_{m_1},\dots, \varphi_{m_r})$, we can form the analogous $r$-\emph{dimensional excited state}
\begin{equation}
\Phi_{m_1,\dots, m_r}(x) \defeq \prod_{i=1}^r \varphi_{m_i}(x_i).
\end{equation}
This function continues to have $L^2$-norm 1, and by applying the unitary
dilation operator $\cD_{\overrightarrow{\lambda}}$ we obtain an
$L^2$-normalized \emph{squeezed excited state}
$\cD_{\overrightarrow{\lambda}} \Phi_{m_1,\dots, m_r}(x)$.
These are essentially products of unitarily scaled Gaussians in one variable
decorated by products of scaled polynomials.  The main property exhibited by
$\cD_{\overrightarrow{\lambda}} \Phi_{m_1,\dots, m_r}$ that we will use are its concentration properties which are similar to those of $\cD_{\overrightarrow{\lambda}} \Phi_{0}$.

\subsubsection{Expansion around the fixed point}

The following result is inspired by the work of Combescure--Robert
\cite{CombescureRobert:SemiclassicalSpreading} and its proof is an
$r$-dimensional version of Proposition 4.9 in
\cite{EswarathasanNonnenmacher:QuasimodeScarringSurfaces_preprint}.
Hence, we only provide the necessary details and leave the remaining
elements to the reader.

\begin{prop} \label{prop:propag}
For every $l, \tN \in \N$, there exists a constant $C_{l,\tN} > 0$ and coefficients $c_{p}(t, \h)\in\C$, which are polynomials in $t$ and $\h$, such that the following estimate holds for any $\h\in (0,1]$:
\begin{equation} \label{ineq:remest}
\forall t\in\R,\qquad \norm{e^{itQ^{(N)}/\h} \Phi_0 - \cD_{t\overrightarrow{\lambda}}\Phi_{0} - \sum_{p=1}^{l} c_{p}(t,\h) \cD_{t\overrightarrow{\lambda}} \big( \sum_{k_p}  d_{k_p} \Phi_{G(k_p)} \big)} \leq C_{l,N} \,(|t|\h)^{l+1}\,.
\end{equation}
The coefficients $c_{p}(t,\h)$ are $\mathcal{O}(|t|^l h^p)$.  The integer indices $k_p \leq C_{p, \tN}$, $d_{k_p} > 0$ are constants,
and $G(p) \in \N^n$ indexes the excited states.
Furthermore, the number of terms in the sums indexed by $k_p$ is less than some function depending only on $\tN$.
\end{prop}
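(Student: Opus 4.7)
The structural observation driving the argument is that $Q_\qq$ commutes exactly with $A \defeq \tilde{Q}_\qq + Q^{(\tN)}_\nq$. Every monomial appearing in the Weyl symbol of $A$ has the form $x^\alpha\xi^\beta$ with $\alpha-\beta\in\cM$, so that $\{x^\alpha\xi^\beta, q_\qq\} = (\overrightarrow{\lambda}\cdot(\alpha-\beta))\,x^\alpha\xi^\beta = 0$; since $q_\qq$ is quadratic its higher Moyal brackets with any symbol vanish, and the operator commutator $[Q_\qq, A]$ reduces to $-i\h\,\OpWh(\{q_\qq, \sigma(A)\}) = 0$ exactly. This factorizes the propagator as
\[
e^{itQ^{(\tN)}/\h}\Phi_0 \;=\; \cD_{t\overrightarrow{\lambda}}\,e^{itA/\h}\Phi_0,
\]
reducing everything to analyzing the slow piece $e^{itA/\h}\Phi_0$.

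The next step is a Taylor expansion in $t$:
\[
e^{itA/\h}\Phi_0 \;=\; \Phi_0 \,+\, \sum_{k=1}^{l}\frac{1}{k!}\Bigl(\frac{it}{\h}\Bigr)^{k} A^{k}\Phi_0 \,+\, R_l(t,\h),
\]
with Duhamel remainder $R_l(t,\h) = \int_0^t \tfrac{(t-s)^l}{l!}\,e^{isA/\h}(iA/\h)^{l+1}\Phi_0\,\ds$, using that $A$ commutes with $e^{isA/\h}$. Since $e^{isA/\h}$ is unitary (it is a factor of the unitary $e^{isQ^{(\tN)}/\h}$ by the factorization above), this yields $\|R_l(t,\h)\| \leq \frac{|t|^{l+1}}{(l+1)!}\,\|(A/\h)^{l+1}\Phi_0\|$. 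The whole estimate thus reduces to establishing $\|A^k\Phi_0\| = \cO(\h^{2k})$ and identifying $A^k\Phi_0$ explicitly in terms of excited states.

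Both goals are accomplished via the ladder-operator representation $\OpWh(x_i) = \sqrt{\h/2}(a_i^*+a_i)$ and $\OpWh(\xi_i) = i\sqrt{\h/2}(a_i^*-a_i)$: every Weyl monomial $\OpWh(x^\alpha\xi^\beta)$ equals $\h^{(|\alpha|+|\beta|)/2}$ times an $\h$-independent polynomial of degree $|\alpha|+|\beta|$ in $a_i^*, a_i$. Because the $Q^{(\tN)}_\nq$-monomials satisfy $|\alpha|+|\beta|\geq 4$ and $\tilde{Q}_\qq$ carries an extra $\cO(\h)$ factor in its symbol, one obtains an expansion $A\Phi_0 = \sum_{n\geq 2} \h^n \Xi_n$ in which each $\Xi_n$ is a fixed finite linear combination of the excited states $\Phi_{m_1,\dots,m_r}$ built from~\eqref{e:excited}. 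Iterating yields $A^k\Phi_0 = \sum_{n\geq 2k} \h^n \Upsilon_{k,n-2k}$ with analogous control, so that $\|A^k\Phi_0\| = \cO(\h^{2k})$ and therefore $\|R_l(t,\h)\| = \cO((|t|\h)^{l+1})$. Collecting the Taylor contributions $\frac{(it/\h)^k}{k!} A^k\Phi_0$ by total power of $\h$ and regrouping the excited states that occur at each order into fixed linear combinations $\sum_{k_p} d_{k_p}\Phi_{G(k_p)}$ extracts the coefficients $c_p(t,\h)$; each is a polynomial in $t$ of degree at most $l$ whose minimal $\h$-order is $\h^p$, so $|c_p(t,\h)| \leq C_{l,\tN}|t|^l \h^p$. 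Applying the unitary $\cD_{t\overrightarrow{\lambda}}$ throughout produces the claimed formula. The main delicate point, which absorbs most of the bookkeeping, is uniform control of the combinatorial proliferation of excited states under iteration of $A$: the degree bound on $A$ (any monomial has degree $\leq 2\tN$) confines the excitation levels of states appearing in $A^k\Phi_0$ to $\sum m_i\leq 2\tN k$, a range in which ladder matrix elements are uniformly bounded, and the constants $d_{k_p}$ in the statement are precisely these matrix elements.
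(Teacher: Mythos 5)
Your proposal is correct and in substance follows the same route as the paper. The paper also rests on the two pillars you isolate: the exact operator commutativity $[Q_\qq,\tilde Q_\qq+Q^{(\tN)}_\nq]=0$ (justified as you do, via quadraticity of $q_\qq$ making Moyal bracket equal Poisson bracket, and the resonance condition killing the Poisson bracket), and the ladder-operator bookkeeping that shows $(\tilde Q_\qq+Q^{(\tN)}_\nq)^k\Phi_0=\cO(\h^{2k})$ times a bounded combination of excited states. The only organizational difference is that you invoke the commutativity up front to factor the propagator as $U(t)=\cD_{t\overrightarrow{\lambda}}\,e^{\mp itA/\h}$ and then Taylor-expand the second factor with a Duhamel remainder, whereas the paper first writes the iterated Dyson/Duhamel expansion of $U(t)-U_\qq(t)$ and then uses the commutativity to collapse the nested time integrals to $\frac{t^p}{p!(i\h)^p}U_\qq(t)A^p$; after that simplification the two expansions coincide term-by-term. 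Your packaging is marginally cleaner, but it is the same argument. One small point you might flag but do not need to fix: unitarity of $e^{isA/\h}$ (equivalently of $U_\qq(t)^{-1}U(t)$) is being assumed, as it is implicitly in the paper, and both proofs share that caveat.
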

\begin{rem}
This proposition will play a crucial role in determining the microlocal
concentration of our future constructed quasimode.  In order to show that
the spectral width of our quasimodes is uniform in how we vary our submanifold $M$,
we will need to keep track of the dependence of certain constants on the
parameters $l$ and $\tN$.  We will ultimately verify that any $l\geq 2$ is
sufficient for our purposes but that $\tN$ may have to be large,
specifically we shall require $(\tN+1)\epsilon_2 /3 > 1$ where
$\epsilon_2$ will appear in our Ehrenfest time.
\end{rem}

\begin{proof}
As in \cite{CombescureRobert:SemiclassicalSpreading}, we would like to
show that the full evolved state
$\Phi^{(\tN)}_t = U(t)\Phi_0 = e^{-iQ^{(\tN)}t/\h}\,\Phi_0$ is sufficiently
approximated by
$U_\qq(t) \Phi_0 = \cD_{\overrightarrow{\lambda}t} \Phi_{0}(x)$,
modulo a large sum of $r$-dimensional excited states whose $L^2$-norm
is sufficiently small (bounded by an appropriate power of $\h$).

The method of approximation is via the so-called Dyson expansion of
Duhamel's formula
\bequ
U(t) - U_\qq(t) = \frac{1}{ih}\int_0^t U(t-t_1) \left( \tilde{Q}_\qq + Q^{(\tN)}_\nq \right) U_\qq(t_1) \, dt_1,
\eequ
which corresponds to the choice $l=1$.  Using this formula directly leads
to the bound $\norm{U(t) \Phi_0 - U_\qq(t) \Phi_0} \leq C_{1, \tN} t\h$
for all $t\in\R$, $\h\in[0,1)$ (we will later see that
$t \leq C(\tilde{\lambda}_1, \epsilon_2) |\log \h|$).  This is not sufficient
since we want future error estimates to be $\cO(h^{1+\delta})$ for some $\delta>0$.

Accordingly we iterate the formula $l>1$ times to obtain
\begin{multline*}
U(t) - U_\qq(t) = \\ 
\sum_{j=1}^{l} \frac{1}{(i\h)^j} \int_{0}^t \int_{t_1}^t \dots \int_{t_{j-1}}^t U_\qq(t-t_j) \left( \tilde{Q}_\qq + Q^{(\tN)}_\nq \right)
U_\qq(t_j - t_{j-1})\left( \tilde{Q}_\qq + Q^{(\tN)}_\nq \right)\\
\cdots \left( \tilde{Q}_\qq + Q^{(\tN)}_\nq \right) U_\qq(t_{1})  \, dt_1 \dots dt_j\\
+ \frac{1}{(i\h)^{l+1}} \int_{0}^t \int_{t_1}^t \dots \int_{t_{l}}^t
U(t-t_{l+1}) \left( \tilde{Q}_\qq + Q^{(\tN)}_\nq \right) 
U_\qq(t_{l+1} - t_{l}) \\
\left( \tilde{Q}_\qq + Q^{(\tN)}_\nq \right)\cdots \left( \tilde{Q}_\qq + Q^{(\tN)}_\nq \right) U_\qq(t_{1})  \, dt_1 \dots dt_{l+1}.
\end{multline*}
To simplify the notation, we shorten the last term to $R_l^{(N)}(t,\h)$.

A crucial fact involving the quantum Birkhoff normal form in
Proposition~\ref{p:qbnf} is that $[Q^{(\tN)}_\nq, Q^{(\tN)}_\qq]=0$ and that $[\tilde{Q}_\qq, Q_\qq]=0$.
This implies $Q^{(\tN)}_\nq$ and $\tilde{Q}_\qq$ also commute with $U_\qq$ by functional calculus, effectively giving
us an exact form of Egorov's Theorem when we apply the quadratic evolution to our resonant
Hamiltonians: $(U_\qq)^*Q^{(\tN)}_\nq U_\qq = Q^{(\tN)}_\nq$ since the
Weyl symbols of $Q^{(\tN)}_\nq$ and $\tilde{Q}_\qq$ are resonant functions under the action
of the Hamiltonian vector field of $Q_\qq^{(\tN)}$.  For proofs of Egorov's Theorem in this case, see
\cite{CombescureRobert:SemiclassicalSpreading, Zworski:SemiclassicalAnalysis}.
This commutativity leads us to the simpler expression
\begin{multline}\label{e:dyson-l}
U(t) - U_\qq(t)=\sum_{p=1}^{l} \frac{t^p}{p!(i\h)^p} U_\qq(t) \left( \tilde{Q}_\qq + Q^{(\tN)}_\nq \right)^p \\
+ \frac{1}{(i\h)^{l+1}} \int_{0}^t \frac{t_l^{l}}{l!} U(t-t_{l+1})\,
U_\qq(t_{l+1})\,\left( \tilde{Q}_\qq + Q^{(\tN)}_\nq \right)^{l+1}\,dt_{l+1}\,.
\end{multline}

It is helpful to understand the explicit action of $Q^{(N)}_\nq$ on our
intial state $\Phi_0$.
As
\begin{equation}
((x+y)/2)^\alpha\xi^\beta =
   \prod_{i=1}^r\big( \sum_{\gamma_i}^{\alpha_i}
               \binom{\alpha_i}{\alpha_i - \gamma_i}
               \left(\frac{x_i}{2}\right)^{\alpha_i - \gamma_i}
               \xi_i^{\beta_i} \left(\frac{y_i}{2}\right)^{\gamma_i} \big)\,,
\end{equation}
we see that 
\bequ \label{monomialops}
\OpWh(x^\alpha \xi^\beta) = \prod_{i=1}^r\bigg( \sum_{\gamma_i=0}^{\alpha_i} \binom{\alpha_i}{\alpha_i - \gamma_i} \left(\frac{x_i}{2}\right)^{\alpha_i - \gamma_i} (\h D_{x_i})^{\beta_i} \left(\frac{x_i}{2}\right)^{\gamma_i} \bigg).
\eequ
Using that $X_i = \sqrt{\frac{\h}{2}}(a_i^* + a_i)$ and $\h D_{x_i} = i\sqrt{\frac{\h}{2}}(a_i^* - a_i)$, where $a_i^*$ and $a_i$ are the raising and lowering operators in the $x_i$ variable, we find that (\ref{monomialops}) reduces to
\begin{align}
\OpWh(x^\alpha \xi^\beta) &= \prod_{i=1}^r \OpWh(x_i^{\alpha_i} \xi_i^{\beta_i}) \\
&= \prod_{i=1}^r\bigg( \sum_{\gamma_i=0}^{\alpha_i} \binom{\alpha_i}{\alpha_i - \gamma_i}\left(\frac{1}{2}\right)^{\alpha_i} \left(\frac{\h}{2}\right)^{(\alpha_i + \beta_i)/2} i^{\beta_i} (a_i^* + a_i)^{\alpha_i - \gamma_i} (a_i^* - a_i)^{\beta_i} (a_i^* + a_i)^{\gamma_i} \bigg).
\end{align}
Here we have used composition formulae for the Weyl quantization and the Moyal product for the symbols in disjoint variables.  Keeping in mind the action of the raising and lowering operators on $\Phi_0$, it follows that 
\begin{eqnarray}
&& Q^{(\tN)}_\nq \Phi_0 = \sum_{j=2}^{\tN} \sum_{|\alpha|= |\beta| = j} q_j^{\alpha, \beta}(\h) \OpWh(x^\alpha \xi^\beta) \Phi_0 \\ 
&& = \sum_{j=2}^{\tN} \sum_{|\alpha| = |\beta| = j}^{(P^j_r)^2} q_j^{\alpha, \beta}(\h) \, \cO(\h^{(|\alpha| + |\beta|)/2}) \, \big( \prod_{i=1}^r \sum_{k=0}^{\lceil \frac{2^{\alpha_i + \beta_i}+1}{2} \rceil (\alpha_i + 1)} c_k^{\alpha_i, \beta_i} \phi_k(x_i)\big)  \\
&& = \sum_{j=2}^{\tN} \cO(\h^j) \bigg( q_j^{\alpha, \beta}(\h) \,  \prod_{i=1}^r \sum_{k=0}^{\lceil\frac{2^{\alpha_i + \beta_i}+1}{2}\rceil (\alpha_i + 1)} c_k^{\alpha_i, \beta_i} \phi_k(x_i)\bigg) \\
&& = \sum_{j=2}^{\tN} \cO(\h^j) \bigg( q_j^{\alpha, \beta}(\h) \,\sum_{k=0}^{\prod_{i=1}^r \lceil\frac{2^{\alpha_i + \beta_i}+1}{2}\rceil (\alpha_i + 1)} c(\alpha, \beta, k) \Phi_{F(\alpha, \beta, k)} \bigg) 
\end{eqnarray}
where $P^j_r$ is the partition function and $\Phi_F$ is an
$r$-dimensional excited state with $F \in \N^r$ being a function of
$\alpha, \beta,$ and $k$.  We have also used the fact that for odd
(respectively, even) $\alpha_i + \beta_i$, the operator
$(a_i^* + a_i)^{\alpha_i - \gamma_i} (a_i^* - a_i)^{\beta_i} (a_i^* + a_i)^{\gamma_i}$
yields a sum of odd (respectively, even) indexed excited states when
applied to $\phi_0(x_i)$.  This is due to the fact that any word of
length $W$ consisting of $a_i^*$ and $a_i$ is a sum of words in
``normal ordering" $(a_i)^m (a_i^*)^n$ 
(after using that $[a_i^*, a_i] =1$) each of whose length is equal in
parity to $W$.  Hence, after using that $\tilde{Q}_\qq \Phi_0 = \mathcal{O}(\h^2)$, 
\begin{eqnarray}
(\tilde{Q}_\qq + Q_\nq^{(\tN)})^l \Phi_0 = \sum_{j=2l}^{\tN l} \cO(\h^j) \big( \sum_k^{L(\tN,l)} d_k \, \Phi_{G(k)} \big) 
\end{eqnarray}
where $d_k > 0$ are constants, $L(\tN,l)>0$ is a large (but computable) constant, and $G(k) \in \N^n$.  Moreover the $L^2$ norm of each term of (\ref{e:dyson-l}) applied to our ground state, keeping in mind that $U_\qq(t)$ is unitary, is
\begin{equation}
\norm{\frac{t^p}{p!(i\h)^p} U_\qq(t)\big( \tilde{Q}_\qq + Q^{(\tN)}_\nq\big)^p \Phi_0} \leq C_{p,\tN, r} (|t|^p \h^p).
\end{equation}
The constant $C_{p,\tN}$ grows exponentially in $\tN$ but since $\tN$ is
independent of $\h$ this is not an issue.

We now return to the remainder term $R_l^{(N)}(t,\h)$ from \eqref{e:dyson-l}.
It yields 
\bequ\label{e:remain-l}
\norm{\frac{1}{(i\h)^{l+1}} \int_{0}^t \frac{t_l^{l}}{l!} U(t-t_{l+1})\,U_\qq(t_{l+1})\, (\tilde{Q}_\qq +  Q^{(\tN)}_\nq )^{l+1}\Phi_0 \,dt_{l+1}}\leq C_{l,\tN, r}\,(|t|\h)^{l+1}\,.
\eequ
Using these estimates, we group the terms \eqref{e:dyson-l} in
increasing powers of $\h$.  Although the notation is tedious, it useful
to write the general expression of the $p$-th term in the sum
\eqref{e:dyson-l} (removing the factor $\frac{t^p}{p!(i\h)^p}$):
\begin{equation}
\mathcal{O}(\h^{2p}) \sum_{j_1,\dots, j_p}^{N, \dots, N} \, \, \sum_{|\alpha^1|=|\beta^1|=j_1, \dots, |\alpha^p|=|\beta^p|=j_p} q_{i_1}^{\alpha^1, \beta^1}(\h) \dots q_{i_p}^{\alpha^p, \beta^p}(\h) \, \OpWh(x^{\alpha^1}\xi^{\beta^1}) \dots \OpWh(x^{\alpha^p}\xi^{\beta^p}) \Phi_0,
\end{equation}
where $\cO(\h^{2p})$ denotes a function which is bounded above by a positive constant times $\h^{2p}$.  Beyond the principal term $\cD_{t \overrightarrow{\lambda}} \Phi_0$,
the Dyson expansion then takes the form
\bequ
\cD_{t \overrightarrow{\lambda}} \left[c_1(t,h) \big( \sum_{k_1}^{L(1,N)} d_{k_1} \, \Phi_{G(k_1)} \big) + c_2(t,h) \big( \sum_{k_2}^{L(2,N)} d_{k_2} \, \Phi_{G(k_2)} \big) + \dots + c_l(t,h)\big( \sum_{k_l}^{L(l,N)} d_{k_l} \, \Phi_{G(k_l)}\big) \right]
\eequ
where $c_p(t,h) = \cO(|t|^l \h^p)$.  

\end{proof}

\subsection{Microlocal support of the evolved state}

Fix a small $\vareps_2\in(0,1)$, For $\h\in (0,1/2]$ we define the
\emph{local Ehrenfest time}
\begin{equation}\label{e:Ehrenfest}
T_{\vareps_2}\defeq \frac{(1-\vareps_2)|\log\h|}{2\tilde{\lambda_1}}\,,
\end{equation}
where $\tilde{\lambda_1}$ is the maximal expansion rate amongst the individual rates $\{\lambda_i\}$; in our specific case, $\tilde{\lambda_1}=2$.

\begin{prop}\label{p:localization1}
Let $\tM > 0$ be some given power of $\h$.  Let $\Phi_t^{(N)} = e^{-iQ^{(\tN)}t/\h}\,\Phi_0$, with its expansion given in Proposition \ref{prop:propag}, and set $l = \tM$. Take $\Theta\in \Ci(T^*(\R^r))$ with $\Theta\equiv 1$ in a neighbourhood of $(0;0)$, and denote its rescaling by $\Theta_{\alpha}(x,\xi)\defeq \Theta(x/\alpha,\xi/\alpha)$. 
Then, for a normal form degree $\tN \in \N$ given in Proposition \ref{p:qbnf}, and $\epsilon_2 > 0$, there exists $C_{\tM,\tN, l}>0$ such that
\bequ
\norm{[\Oph(\Theta_{\h^{\varepsilon_2/3}})-I] \Phi_t^{(N)}}_{L^2} \leq
C_{\tM,\tN, l, r}\,\h^{M}\,,\qquad \h\in (0,1/2]\,,
\eequ
uniformly for times $t\in [-T_{\vareps_2},T_{\vareps_2}]$.
\end{prop}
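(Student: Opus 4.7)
The plan is to combine the Dyson expansion of Proposition~\ref{prop:propag} with an explicit microlocal concentration estimate for the squeezed (excited) states appearing in it. First I would apply Proposition~\ref{prop:propag} with iteration parameter $l=\tM$, which yields
\begin{equation*}
\Phi_t^{(\tN)} \;=\; \cD_{t\overrightarrow{\lambda}}\Phi_0
+\sum_{p=1}^{\tM} c_p(t,\h)\,\cD_{t\overrightarrow{\lambda}}\!\Big(\sum_{k_p} d_{k_p}\Phi_{G(k_p)}\Big)
+R_{\tM}^{(\tN)}(t,\h),
\end{equation*}
with $\norm{R_{\tM}^{(\tN)}(t,\h)}_{L^2}\leq C_{\tM,\tN}(|t|\h)^{\tM+1}$ and $|c_p(t,\h)|$ bounded by some power of $|t|\h$. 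Since $|t|\leq T_{\vareps_2}=\cO(|\log\h|)$, the remainder has $L^2$-norm $\cO(\h^{\tM+1}|\log\h|^{\tM+1})=\cO(\h^{\tM})$, and as $\Oph(\Theta_{\h^{\vareps_2/3}})-I$ is uniformly $L^2$-bounded (the symbol lies in $S^0_{\vareps_2/3}$ with $\vareps_2/3<1/2$), its action on the remainder is an admissible $\cO(\h^{\tM})$.

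The essential step is the microlocalization claim
\begin{equation*}
\norm{[\Oph(\Theta_{\h^{\vareps_2/3}})-I]\cD_{t\overrightarrow{\lambda}}\Phi_{G(k_p)}}_{L^2}=\cO_{G(k_p)}(\h^{\infty}),
\qquad |t|\leq T_{\vareps_2},
\end{equation*}
for the principal Gaussian and each excited state. I would use that $U_\qq(t)$ is the metaplectic lift of the linear symplectic map $\kappa_t\colon (x_i,\xi_i)\mapsto(e^{t\lambda_i}x_i,e^{-t\lambda_i}\xi_i)$, so the Wigner transform $W[\cD_{t\overrightarrow{\lambda}}\Phi_{G(k_p)}]=\kappa_t^*W[\Phi_{G(k_p)}]$ is a Gaussian of variance $\h$ multiplied by a polynomial whose degree is bounded in terms of $|G(k_p)|$, concentrated (up to tails of size $\h^\infty$) inside the box $\{|x_i|\lesssim \h^{1/2} e^{t\lambda_i},\ |\xi_i|\lesssim \h^{1/2} e^{-t\lambda_i}\}$. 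For $|t|\leq T_{\vareps_2}=(1-\vareps_2)|\log\h|/(2\tilde{\lambda}_1)$ with $\tilde{\lambda}_1=\max_i\lambda_i=2$, every side-length is at most of order $\h^{\vareps_2/2}$. Meanwhile $1-\Theta_{\h^{\vareps_2/3}}$ vanishes on a neighbourhood of $(0;0)$ of radius of order $\h^{\vareps_2/3}$, and since $\vareps_2/2>\vareps_2/3$ these supports are disjoint for $\h$ small. The $\cO(\h^\infty)$ bound then follows from the $S_{\vareps_2/3}$ pseudodifferential calculus (equivalently, repeated integration by parts against the Gaussian tails).

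Assembling the three contributions, the principal term and each of the finitely many correction terms contribute $|c_p(t,\h)|\cdot\cO(\h^\infty)=\cO(\h^\infty)$, while the remainder contributes $\cO(\h^{\tM})$; the total count of terms and the multi-indices $G(k_p)$ are controlled polynomially in $p$, $\tM$ and $\tN$, so the finite sum preserves the $\cO(\h^\infty)$ behaviour and the final bound $\cO(\h^{\tM})$ follows. The hard part will be the uniform microlocalization estimate out to the full Ehrenfest time $T_{\vareps_2}$: the strict gap $\vareps_2/2>\vareps_2/3$ is essential (it places the rescaled Wigner support strictly inside the region where $\Theta_{\h^{\vareps_2/3}}\equiv 1$), and the condition $\vareps_2/3<1/2$ is precisely what allows $\Theta_{\h^{\vareps_2/3}}$ to belong to a valid semiclassical symbol class. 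A subsidiary point is to verify that the polynomial growth of the Hermite factors, together with the dependence of constants on $|G(k_p)|$, remains harmless under the final summation; this is automatic because only polynomially (in $\tM,\tN$) many excited states with polynomially bounded indices appear.
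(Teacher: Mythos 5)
Your argument is correct and follows the same structural skeleton as the paper's: Dyson expansion of $\Phi_t^{(\tN)}$ with $l=\tM$, a microlocal concentration estimate for the squeezed (excited) states valid out to the Ehrenfest time $T_{\vareps_2}$, the key scaling comparison $\h^{\vareps_2/2}$ (width of the evolved states) against $\h^{\vareps_2/3}$ (scale of the cutoff), and an $\cO((|t|\h)^{\tM+1})$ bound on the remainder that becomes $\cO(\h^{\tM})$ when $|t|\lesssim|\log\h|$. The one place where your route differs from the paper's is the intermediate microlocalization estimate: you derive it directly in all $r$ dimensions at once via the covariance of the Wigner transform under the metaplectic dilations and the Gaussian-times-polynomial form of the Wigner functions of excited states, whereas the paper instead begins with product-form cutoffs $\prod_i\theta_{i,\alpha}$, invokes the one-dimensional estimate from \cite{EswarathasanNonnenmacher:QuasimodeScarringSurfaces_preprint} factor by factor, and then converts to a general cutoff $\Theta_\alpha$ via the $S^0_{\vareps_2/3}$ pseudodifferential calculus. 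Your Wigner-transform derivation is more self-contained and gives $\cO(\h^\infty)$ directly, while the paper's product-then-convert scheme leans on the cited 1D lemma (which it states in the weaker form $C_m\h^{\tM}$). Both are adequate since the remainder $R_{\tM}$ in any case dominates and limits the final bound to $\cO(\h^{\tM})$. One small point in your favor: the requirement for $\Theta_{\h^{\vareps_2/3}}$ to define an admissible $S^0_\delta$ symbol is indeed $\vareps_2/3<1/2$ as you write, consistent with \eqref{singsymbolclass}; the paper's proof states $\vareps_2/3>1/2$, which is a typographical slip.
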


\begin{proof}
This is a variant of
\cite[Prop.\ 4.22]{EswarathasanNonnenmacher:QuasimodeScarringSurfaces_preprint}.  We begin with cutoffs given by a product, choosing
$\theta_{i} \in \Cic([-2,2]^2)$ for each
$i=1,\dots, r$ such that $\theta_i = 1$ in $[-1,1]^2$, and scaling them as
$\theta_{i, \alpha}(x,\xi)\defeq \theta_i(x/\alpha,\xi/\alpha)$.
It was shown in
\cite{EswarathasanNonnenmacher:QuasimodeScarringSurfaces_preprint} that
for any index $m$ there exists $C_m>0$ in a bounded range such that
$$
\norm{\left[\OpWh(\theta_{i, \alpha})-I \right] \cD_{t\lambda_i}\varphi_{m,i}}_{L^2} \leq C_m \h^{\tM} \,,
$$
uniformly for $|t|\leq T_{\vareps_2}$ and width $\alpha\geq \h^{\vareps_2/3}$. 

We propagate the product and use Proposition~\ref{prop:propag}
along with the fact that each expression $\sum_{k_p}^{L(p,\tN)} d_{k_p} \Phi_{G(k_p)}$
is a sum of terms each of which is a product of scaled Gaussians
$\phi_{0}$ in disjoint variables (each of uniform width
$e^{t \tilde{\lambda}_1} \h^{1/2} \leq \h^{\epsilon_2/2}$ by \eqref{e:Ehrenfest}))
multiplied by polynomial factors.
As we have a product structure in our evolved state given by
Proposition~\ref{prop:propag}, we have the same estimate for the
individual terms:
\bequ
\norm{[\Oph\left(\prod_{i=1}^r \theta_{i, \alpha}\right)-I] \cD_{t\overrightarrow{\lambda}}\Phi_{G(p)}}_{L^2} \leq C_{\tM, \tN,r} \h^M
\eequ
uniformly for $|t|\leq T_{\vareps_2}$, multi-indices $G(p)$ in a bounded range, and the same width $\alpha\geq \h^{\vareps_2/3}$ in all variables where $C_{N,p} >0$. As the number of terms in the sum (\ref{ineq:remest}) depends only $l$ and not on $\h$, this estimate holds for $\Phi^{(N)}_t$ as well but now with a constant $C'_{l,\tN,r} > 0$.  Since $R_l^{(N)}(t,\h) = \cO(\h^{l+1 - \delta})$ for some $\delta > 0$ arising from the fact that $t \in [-T_{\epsilon_2},T_{\epsilon_2}]$, we set $l = M$ as the number of interations in the Dyson expansion which in turns give us the following estimate for the full state $\Phi^{(N)}_t$:
\bequ \label{i:supportprodform}
\left\|[\Oph\left(\prod_{i=1}^r \theta_{i, \alpha}\right)-I] \Phi^{(N)}_t \right\|_{L^2} \leq C'_{\tM, \tN,r} \h^M.
\eequ

Now, consider a non-product form cutoff $\Theta_{\alpha} \in \Cic(\R^{2r})$ equal to 1 on the support of $\prod_{i=1}^r \theta_{i}$.  Then 
\bequ 
\Theta_{\alpha} - 1 = \left(1 - \Theta_{\alpha}\right)\left(\prod_{i=1}^r\theta_{i, \alpha} - 1 \right).
\eequ
Taking $\alpha \geq \h^{\epsilon_2/3}$, $\epsilon_2/3 >1/2$ so that
the semiclassical symbol class $S^{0}_{\epsilon_2/3}$
(see \eqref{singsymbolclass}) continues to have expansions in terms of
increasing powers of $\h$, and using the support properties of
$\Theta_{\alpha}$ yields pseudodifferential cutoffs of the form 
\bequ
\OpWh\left(\Theta_{\alpha} - 1 \right) = \OpWh\left(1 - \Theta_{\alpha}\right)
\OpWh\left(\prod_{i=1}^r\theta_{i, \alpha} - 1 \right) + \cO_{L^2}(\h^{\infty}).
\eequ
Our proof is complete after applying this operator to $\Phi^{(N)}_t$ and using (\ref{i:supportprodform}).
\end{proof}

\section{A log-scale quasimode for the Birkhoff normal form}

A straightforward calculuation gives
$\norm{Q^{(N)}(\h) \Phi_0} =
       \left( \prod_{i=1}^r \sqrt{2}\lambda_i \right) \h
       + \cO_{N}(\h^2)$
after recalling that $\Phi_0$ is the standard Gaussian of equation \eqref{e:initial}.
Therefore applying the unitary Fourier integral operators
of Proposition~\ref{p:qbnf} to the state constructed so far
produces a quasimode $\bPsih$ for equation \eqref{e:preansatz} with a spectral width which is too large.

To get a narrower quasimode we use the usual time-averaging proceudre
originally due to Vergini--Schneider \cite{VerginiSchneider:MatrixCoeffScars}
and also employed in the work \cite{EswarathasanNonnenmacher:QuasimodeScarringSurfaces_preprint}.

Let $T>0$ be an averaging time which will be chosen later, fix a weight function
$\chi\in \Cic((-1,1),[0,1])$ and its rescaled version
$\chi_T(t)\defeq \chi(t/T)$.  Our transverse quasimode will be
\bequ\label{eq:def-qmode}
\tilde{\Phi}^{(\tN)}_{\chi_T,\h} \defeq  \int_{\R}\chi_T(t)\, e^{it(E_{\h} - 1)/\h}\Phi^{(\tN)}_t\,dt
\eequ
where $E_{\h} = 1 + f(\h)$.  Note that this state is not yet normalized. In order to compute its spectral width, we will first need to compute its $L^2$ norm. 
\begin{lem}\label{l:norm-qmode}
Let $C>0$ be a constant which will be chosen later.  For the semiclassically large averaging time $1\leq T=T_\h\leq C|\log\h|$ the square norm of our state
$\tilde{\Phi}^{(\tN)}_{\chi_T,\h}$ satisfies
$$
\norm{\tilde{\Phi}^{(\tN)}_{\chi_T,\h}}^2 = T\,S(\lambda, f(\h)/\h, r)\,\norm{\chi}_{L^2}^2\, \big(1 + \cO_{\tN,r}(1/T)\big)\,,
$$ 
where $S(\bullet, \bullet)$ is a positive (and explicit) function $\h$
small enough, and $\lambda$ is the vector of expansion rates transverse
to $M$ as seen in Proposition \ref{p:qbnf}. 
\end{lem}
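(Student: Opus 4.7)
My plan is to expand the square norm as a double integral, reduce to a one-parameter matrix element of the evolution, then separate the ``slow'' averaging variable (which produces the factor~$T$) from the ``fast'' variable (which picks out the principal term via a Gaussian overlap), and finally control the corrections coming from Proposition~\ref{prop:propag}.

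Expanding the definition~\eqref{eq:def-qmode} gives
\[
\norm{\tilde{\Phi}^{(\tN)}_{\chi_T,\h}}^2 = \iint \chi_T(t)\chi_T(t')\,e^{i(t-t')(E_\h-1)/\h}\, \la\Phi^{(\tN)}_{t'},\Phi^{(\tN)}_t\ra\, dt\, dt'\,.
\]
Using the formal self-adjointness of $Q^{(\tN)}(\h)$, the inner product depends only on $s\defeq t-t'$ and equals $\la\Phi_0,\Phi^{(\tN)}_s\ra$. Changing variables to $(s,\tau)$ with $\tau=t$, the $\tau$-integral produces the autocorrelation $\int\chi_T(\tau)\chi_T(\tau-s)\,d\tau = T\norm{\chi}_{L^2}^2 + \cO(|s|)$, and a Fubini step reduces the problem to evaluating the $s$-integral of $e^{isf(\h)/\h}\,\la\Phi_0,\Phi^{(\tN)}_s\ra$ against this kernel.

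Next I would invoke Proposition~\ref{prop:propag} to expand $\Phi^{(\tN)}_s$ to order $l$, with $l$ to be chosen later so that the remainder $\cO((|s|\h)^{l+1})$ is negligible on the window $|s|\leq 2C|\log\h|$. The principal term factors through the product structure $\Phi_0=\prod_i\varphi_0(x_i)$ and the tensorial nature of $\cD_{s\overrightarrow{\lambda}}$:
\[
\la\Phi_0,\cD_{s\overrightarrow{\lambda}}\Phi_0\ra = \prod_{i=1}^r \la\varphi_0,\cD_{s\lambda_i}\varphi_0\ra = \prod_{i=1}^r \frac{1}{\sqrt{\cosh(s\lambda_i)}}\,,
\]
which is exponentially decaying in $|s|$ and hence integrable over $\R$. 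The non-principal pieces $c_p(s,\h)\,\cD_{s\overrightarrow{\lambda}}\big(\sum_{k_p} d_{k_p}\Phi_{G(k_p)}\big)$ contribute overlaps $\prod_i\la\varphi_0,\cD_{s\lambda_i}\varphi_{G(k_p)_i}\ra$ which (by parity) either vanish or are again integrable in $s$, and they are weighted by $c_p(s,\h)=\cO(|s|^l\h^p)$ with $p\geq 1$; summed over the finitely many terms, their contribution is $\cO\big(\h^p(|\log\h|)^{l+1}\big)$, easily absorbed into the stated relative error for $\h$ small. Because the principal factor decays exponentially in $|s|$, the deviation $\cO(|s|)$ from the $\tau$-autocorrelation contributes only an $\cO(1)$ correction in absolute terms, that is, a relative $\cO(1/T)$ error.

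Collecting everything, the leading contribution is
\[
T\norm{\chi}_{L^2}^2\cdot S\big(\lambda,f(\h)/\h,r\big)\,,\qquad S(\lambda,\alpha,r) \defeq \int_\R e^{is\alpha}\prod_{i=1}^r\frac{1}{\sqrt{\cosh(s\lambda_i)}}\,ds\,,
\]
which is manifestly positive at $\alpha=0$ and, by continuity and the boundedness of $f(\h)/\h$, remains positive for all sufficiently small $\h$. The main technical obstacle I anticipate is the bookkeeping of the Dyson-expansion remainders against the averaging time $T\sim|\log\h|$: one has to calibrate the expansion order $l$ in terms of the normal-form degree $\tN$ and the constant $C$ in the averaging time so that both the Proposition~\ref{prop:propag} remainder and the $c_p$-corrections land inside the relative error $\cO_{\tN,r}(1/T)$. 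This determines the minimal $l$ one has to use but does not affect the uniformity of the constants in the submanifold~$M$, which is the essential feature required for Corollary~\ref{c:sigmund}.
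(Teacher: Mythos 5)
Your proposal is correct and takes essentially the same route as the paper's proof: expand the square norm as a double integral, use that the overlap $\la\Phi^{(\tN)}_{t'},\Phi^{(\tN)}_t\ra$ depends only on $t-t'$, reduce via the tensor structure to products of one-dimensional overlaps $\la\varphi_0,\cD_{s\lambda_i}\varphi_0\ra=(\cosh s\lambda_i)^{-1/2}$, isolate the factor $T\norm{\chi}_{L^2}^2$ from the $\tau$-autocorrelation, and bound the Dyson-expansion corrections against the relative error $\cO(1/T)$. The one cosmetic difference is that you invoke formal self-adjointness of $Q^{(\tN)}$ directly to reduce the two-time overlap to a function of $s=t-t'$, whereas the paper reaches the same reduction by exploiting the resonance commutation relations of $Q^{(\tN)}_{\nq}$ with $\cD_{t\overrightarrow{\lambda}}$; both arguments are valid and give the same integrand, and the positivity of $S$ is handled in the same way (Fourier transform of a nonzero Schwartz function, stable for small $\h$).
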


\begin{proof}
Although we obtained an explicit expression for the Dyson series
\eqref{ineq:remest}, we prefer here the slightly less explicit operator
equation \eqref{e:dyson-l} in order to reduce our calculations to those in
the proof of
\cite[Lem.\ 5.4]{EswarathasanNonnenmacher:QuasimodeScarringSurfaces_preprint}.

We begin with the representation of the evolved state as
\bequ
\Phi^{(\tN)}_t = \left( U_\qq(t) + \sum_{p=1}^{l} \frac{t^p}{p!(i\h)^p} U_\qq(t) (Q^{(\tN)}_\nq)^p
+ R_l \right) \Phi_0
\eequ
for some $l \in \N$ to be determined later.
The norm squared of the averaged quasimode is then
\bequ\label{e:norm}
\norm{\tilde{\Phi}^{(\tN)}_{\chi_T,E_\h}}^2 = \int \int e^{-i(t'-t)f(\h)/\h}\langle \Phi^{(\tN)}_{t'},\Phi^{(\tN)}_t\rangle\,\chi_T(t')\,\chi_T(t)\,dt\,dt'\,.
\eequ
The key is then approximating the overlaps
$\langle \cD_{t \overrightarrow{\lambda}}(Q^{(\tN)}_\nq)^p \Phi_0,  \cD_{t' \overrightarrow{\lambda}}(Q^{(\tN)}_\nq)^{p'} \Phi_0 \rangle$ \break $= \langle (Q^{(\tN)}_\nq)^p \Phi_0,  \cD_{(t'-t) \overrightarrow{\lambda}}(Q^{(\tN)}_\nq)^{p'} \Phi_0 \rangle$.

Using again the resonance condition on the Weyl symbols we have that
$(Q^{(\tN)}_\nq)^p$ is a power of a symmetric operator and that
$(Q^{(\tN)}_\nq)^p$ commutes with $\cD_{t \overrightarrow{\lambda}}$.
From these
\bequ
\langle (Q^{(\tN)}_\nq)^p \Phi_0,  \cD_{(t'-t) \overrightarrow{\lambda}}(Q^{(\tN)}_{nq})^{p'} \Phi_0 \rangle = \langle (Q^{(\tN)}_\nq)^{p+p'} \Phi_0,  \cD_{(t'-t) \overrightarrow{\lambda}} \Phi_0  \rangle\,.
\eequ
Since for our Euclidean quantization
$\OpWh(x^\alpha \xi^\beta) = \prod_{i=1}^r\OpWh(x_i^{\alpha_i}\xi_i^{\beta_i})$,
the operator $(Q^{(\tN)}_\nq)^p$ maintains a product-type form into sums of
differential operators in disjoint variables, it suffices to estimate
products of the form
\bequ
\prod_{i=1}^r \left\langle \sum_{m_i \geq 0}^{K(\tN,p,p')} c_{m_i} \phi_{m_i},  \cD_{(t'-t)\lambda_i} \phi_0 \right\rangle_{\R_{x_i}},
\eequ
where $K(\tN,p,p') > 0$. 

A straightforward calculation shows
$$ \la \phi_0, \cD_{(t'-t)\lambda_i} \phi_0 \ra_{\R_{x_i}} = \frac{1}{\sqrt{\cosh{\lambda_i(t'-t)}}}.$$
A similar identity for excited states is developed in
\cite[Sec.\ 5.1]{EswarathasanNonnenmacher:QuasimodeScarringSurfaces_preprint}
gives for each $m \in \N$ a constant $C_m > 0$ such that 
\bequ
\left| \frac{\la \phi_m, \cD_{(t'-t)\lambda_i} \phi_0 \ra_{\R_{x_i}}}{\la \phi_0, \cD_{(t'-t)\lambda_i} \phi_0 \ra_{\R_{x_i}}} \right| \leq C_m \, \text { uniformly in } t,t' \in \R.
\eequ
(in fact $C_m = 0$ for odd $m$ since in that case we are integrating an
odd function against an even function).

Returning to equation \eqref{e:norm}, the term arising from the
quadratic operator $U_\qq(t)$ which corresponds to the case $m=0$ takes
the form 
\bequ
I_{0}=\int e^{-i(t'-t)f(\h)/\h} \left( \prod_{i=1}^r \, \la\varphi_0,\cD_{(t-t')q^1}\varphi_0\ra_{\mathbb{R}_{x_i}} \right) \, \chi_T(t')\,\chi_T(t)\, dt\,dt'
\eequ
\bequ
= \int e^{-i(t'-t)f(\h)/\h} \left( \prod_{i=1}^r \, \frac{1}{\sqrt{\cosh \left(q_{1,i}(\h)(t'-t) \right)}} \right) \, \chi_T(t')\,\chi_T(t)\, dt\,dt'\,.
\eequ
A similar expression was evaluated in
\cite[Sec.\ 5.1]{EswarathasanNonnenmacher:QuasimodeScarringSurfaces_preprint},
giving
\bequ \label{e:I_0}
I_{0}(\h)=T\,S(\overrightarrow{\lambda},f(\h)/\h, r)\,\norm{\chi}_{L^2}^2\, (1 + \cO_r(1/T)).
\eequ
It is also shown there that the correction terms arising from $m>0$ 
(and the remainder $R_l$) are bounded above by $\tilde{C}_{m} \h T^l I_0(0)$
when $\h$ is small enough.  We have taken $T \sim |\log \h|$
so the correction terms are $\cO(\h^{\delta})$ for some $\delta>0$ and are
therefore lower order than the constant appearing in \eqref{e:I_0}.

For this we need to bound $I_0(\h)$, that is
$S(\overrightarrow{\lambda},f(\h)/\h, r)$, above and below.  We would like
to do this uniformly for $\h$ small enough, as this uniformity feeds into the
argument for Corollary~\ref{c:sigmund}.  As $f(\h)/\h = \cO(1)$ and
$S(\overrightarrow{\lambda},f(\h)/\h, r)$ is the 1-dimensional Fourier transform
of a non-zero Schwartz function (for all $\h$),
the positivity of $\norm{\tilde{\Phi}^{(\tN)}_{\chi_T,E_\h}}^2$ establishes that of $S$.
\end{proof}

We are can now define our penultimate normalized state 
\bequ
\Psi^{(N)}_{\chi_T,\h} \defeq
   \frac{\tilde{\Phi}^{(N)}_{\chi_T,\h}}{\norm{\tilde{\Phi}^{(N)}_{\chi_T,h}}}.
\eequ

\begin{cor}\label{c:localization2}
Given $\epsilon_2 > 0$ we may choose an averaging time $T = T_{\vareps_2} \leq C(\epsilon_2) |\log h|$ where $C(\epsilon_2)>0$ such that the normalized state
$\Psi^{(\tN)}_{\chi_T,h}$ is localized in the
$\h^{\vareps_2/3}$ neighbourood of $(0;0)$.  That is, for any
$\Theta\in \Cic(\R)$ with $\Theta\equiv 1$ in a fixed neighbourhood of
$(0;0)$, we have the estimate
\bequ
\norm{[\Oph(\Theta_{\h^{\vareps_2/3}})-I] \Psi^{(\tN)}_{\chi_T,\h}}_{L^2} = \cO_{\tN}(\h^\infty)\,.
\eequ
\end{cor}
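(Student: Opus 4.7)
The plan is to deduce the localization of the normalized averaged state $\Psi^{(\tN)}_{\chi_T,\h}$ directly from the pointwise-in-time estimate of Proposition \ref{p:localization1}, combined with the lower bound on the norm of $\tilde{\Phi}^{(\tN)}_{\chi_T,\h}$ provided by Lemma \ref{l:norm-qmode}.

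First, I would fix the averaging time to be $T = T_{\varepsilon_2}$, the local Ehrenfest time defined in \eqref{e:Ehrenfest}, so that $\chi_T$ is supported inside $[-T_{\varepsilon_2}, T_{\varepsilon_2}]$ on which the hypothesis of Proposition \ref{p:localization1} is valid. This yields the choice $C(\varepsilon_2) = \frac{1-\varepsilon_2}{2\tilde\lambda_1}$. I would also fix a large auxiliary integer $\tM$ (to be selected at the very end of the argument), which in turn prescribes the number of iterations $l = \tM$ used in the Dyson expansion of $\Phi_t^{(\tN)}$.

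Since the cutoff operator $\Oph(\Theta_{\h^{\varepsilon_2/3}}) - I$ is independent of $t$, it commutes with the Bochner integral in the definition \eqref{eq:def-qmode} of $\tilde{\Phi}^{(\tN)}_{\chi_T,\h}$. Applying the triangle inequality under the integral sign and using the pointwise bound of Proposition \ref{p:localization1} uniformly on $|t| \leq T_{\varepsilon_2}$ gives
$$\norm{[\Oph(\Theta_{\h^{\varepsilon_2/3}}) - I]\,\tilde{\Phi}^{(\tN)}_{\chi_T,\h}}_{L^2} \leq C_{\tM,\tN,l,r}\,\h^{\tM}\,\int_\R |\chi_T(t)|\,dt = C'\,\h^{\tM}\,T\,\norm{\chi}_{L^1}.$$

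To normalize, I would apply Lemma \ref{l:norm-qmode} to obtain $\norm{\tilde{\Phi}^{(\tN)}_{\chi_T,\h}} \geq c\sqrt{T}$ for some $c > 0$ independent of $\h$; this uses that $f(\h)/\h = \cO(1)$ and the uniform positivity of $S(\overrightarrow{\lambda}, f(\h)/\h, r)$ established in the proof of that lemma. Dividing through yields
$$\norm{[\Oph(\Theta_{\h^{\varepsilon_2/3}}) - I]\,\Psi^{(\tN)}_{\chi_T,\h}}_{L^2} \leq C''\,\h^{\tM}\,\sqrt{T} \leq C'''\,\h^{\tM}\sqrt{|\log\h|}.$$
Since $\tM$ may be chosen arbitrarily large, independently of the normal-form degree $\tN$, this estimate is $\cO_{\tN}(\h^\infty)$, which is the claim. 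The only substantive issue is the uniform lower bound on $S(\overrightarrow{\lambda}, f(\h)/\h, r)$, which ensures that the $\sqrt{T}$ factor we lose in normalization is merely polylogarithmic and is therefore absorbed by the arbitrarily large power $\h^{\tM}$; this uniformity has already been addressed within Lemma \ref{l:norm-qmode} and feeds directly into the diagonal argument of Corollary \ref{c:sigmund}.
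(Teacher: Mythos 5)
Your argument is correct and follows the same route the paper intends (the paper's own proof is a one-line pointer to Proposition~\ref{p:localization1}): commute the time-independent cutoff past the Bochner integral, apply the pointwise-in-time estimate uniformly on $|t|\le T_{\varepsilon_2}$, and divide by the $\sqrt{T}$ lower bound from Lemma~\ref{l:norm-qmode}, noting that the arbitrary choice of the Dyson order $\tM=l$ absorbs the polylogarithmic loss and yields $\cO_{\tN}(\h^\infty)$. Nothing is missing.
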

\begin{proof}
This is immediate from Proposition~\ref{p:localization1} whilst keeping
in mind that $T_{\epsilon_2}$ goes to the Ehrenfest time \eqref{e:Ehrenfest}.
\end{proof}

\begin{prop} \label{p:logwidth}
For $T=T_{\epsilon_2}$, the spectral width of the state
$\Psi^{(\tN)}_{\chi_T,h}$ at energy $0$ is
\bequ
\norm{\left(Q^{(\tN)}(\h)-f(\h) \right)\Psi^{(\tN)}_{\chi_T, \h}}^{2} =
\frac{\h^2}{T^2_{\vareps_2}}\,\frac{\norm{\chi'}_{L^2}^2}{\norm{\chi}_{L^2}^2}\, (1 + \cO_{\tN,r}(1/|\log\h|)).
\eequ
Hence, $\{\Psi_{\chi_T,h}\}_{\h}$ yields a semiclassical measure which is invariant under the classical dynamics generated by $Q^{(\tN)}(h)$ and equal to $\delta_0(x) \in \mathcal{D}'(\R^r)$.
\end{prop}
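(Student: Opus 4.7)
The plan is to exploit the Schr\"odinger equation $i\h\partial_t \Phi^{(\tN)}_t = Q^{(\tN)}(\h)\Phi^{(\tN)}_t$ satisfied by the propagated state, and compute $(Q^{(\tN)}(\h)-f(\h))\tilde\Phi^{(\tN)}_{\chi_T,\h}$ by a single integration by parts in $t$. Because $\chi_T$ is compactly supported, boundary terms vanish; substituting $i\h\partial_t$ for $Q^{(\tN)}$ inside the $t$-integral, then differentiating the product $\chi_T(t)e^{itf(\h)/\h}$ and recognizing that the phase contributes precisely the $f(\h)$-shift, one obtains
$$(Q^{(\tN)}(\h)-f(\h))\,\tilde\Phi^{(\tN)}_{\chi_T,\h} = -\frac{i\h}{T}\int \chi'(t/T)\,e^{itf(\h)/\h}\,\Phi^{(\tN)}_t\,dt,$$
where the $T^{-1}$ comes from $\chi_T'(t)=T^{-1}\chi'(t/T)$. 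Thus the quasimode defect is again an averaged state of exactly the form treated in Lemma~\ref{l:norm-qmode}, only with $\chi$ replaced by $\chi'$ and an overall scalar prefactor $-i\h/T$.

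Next I would invoke Lemma~\ref{l:norm-qmode} on this auxiliary integral. The proof of that lemma expresses the squared $L^2$-norm as a double time-integral whose $(t,t')$ dependence, after the change of variables $s=t'-t$, $u=(t+t')/2$, separates into a factor $T\,\|\chi\|_{L^2}^2$ coming from integration of $\chi(u/T)^2$ along the diagonal and a factor $S(\lambda,f(\h)/\h,r)$ coming from the $s$-integral against the cosh-decaying overlaps $\langle\varphi_0,\cD_{s\lambda_i}\varphi_0\rangle$ and their excited-state corrections. Since this computation nowhere uses positivity of the weight---only compact support and $L^2$-integrability---the same argument applies verbatim to $\chi'$ in place of $\chi$, yielding
$$\Bigl\|\int\chi'(t/T)\,e^{itf(\h)/\h}\,\Phi^{(\tN)}_t\,dt\Bigr\|^2 = T\,S(\lambda,f(\h)/\h,r)\,\|\chi'\|_{L^2}^2\,\bigl(1+\cO_{\tN,r}(1/T)\bigr).$$
Dividing this by $\|\tilde\Phi^{(\tN)}_{\chi_T,\h}\|^2$ as given in Lemma~\ref{l:norm-qmode} cancels the common positive factor $T\cdot S(\lambda,f(\h)/\h,r)$ and leaves $\h^2/T^2_{\vareps_2}$ multiplied by $\|\chi'\|_{L^2}^2/\|\chi\|_{L^2}^2$ and a $(1+\cO_{\tN,r}(1/|\log\h|))$ correction, which is the stated formula.

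The assertion about the semiclassical measure then follows directly. Since $T_{\vareps_2}\asymp|\log\h|$, the bound just proved exhibits $\{\Psi^{(\tN)}_{\chi_T,\h}\}_\h$ as a log-scale quasimode for $Q^{(\tN)}(\h)$ at central energy $f(\h)=\cO(\h)$, so any weak-$*$ limit of its Wigner distributions is a probability measure on $T^*\R^r$ invariant under the Hamiltonian flow of the principal symbol of $Q^{(\tN)}$. Corollary~\ref{c:localization2} pins the microlocal support of $\Psi^{(\tN)}_{\chi_T,\h}$ to the $\h^{\vareps_2/3}$-neighbourhood of $(0;0)$, and since $\h^{\vareps_2/3}\to 0$ the limit must be supported at the single point $(0;0)$; being a probability measure, it equals $\delta_{(0;0)}$, which projects to $\delta_0(x)$ on the base and is trivially flow-invariant as $(0;0)$ is a fixed point.

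The main technical obstacle is justifying the use of Lemma~\ref{l:norm-qmode} with $\chi$ replaced by $\chi'$, since the lemma is stated only for non-negative weights in $\Cic((-1,1),[0,1])$. I expect this to cost little more than inspecting the proof: the constant $S(\lambda,f(\h)/\h,r)$ is produced by the Fourier transform of a Schwartz function of $s$ that depends on $\lambda$, $r$, and $f(\h)/\h$ alone (not on the weight), and its uniform positivity for $\h$ small follows exactly as in the original lemma. The error term $\cO_{\tN,r}(1/T)$ arises from estimating off-diagonal excited-state overlaps and polynomial-in-$T$ contributions from the Dyson expansion against an overall gain in $\h$, and therefore retains its form for any compactly supported weight.
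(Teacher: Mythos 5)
Your proposal is correct and follows essentially the same route as the paper: integration by parts in $t$ using $Q^{(\tN)}-f(\h) = i\h\partial_t$ on the propagated state to convert the quasimode defect into an averaged state with weight $\chi'_T$, invocation of Lemma~\ref{l:norm-qmode} with $\chi$ replaced by $\chi'$, and then the combination of the $o(\h)$ spectral width with Corollary~\ref{c:localization2} to identify the limit measure as $\delta_0$. Your explicit attention to the sign-indefiniteness of $\chi'$ (which falls outside the stated hypothesis $\chi\in\Cic((-1,1),[0,1])$ of Lemma~\ref{l:norm-qmode}) and your justification that the lemma's computation never uses positivity of the weight is a correct and useful observation that the paper glosses over with the phrase ``except our cutoff in time is now $-i\h\,\chi'(t/T)/T$''.
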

\begin{proof}
Recalling that $\Psi_{\chi_T, \h}$ is defined by (\ref{eq:def-qmode}), we perform the following calculation:
\begin{align*}
\left(Q^{(\tN)}(h)-f(\h) \right)\Psi_{\chi_T, h}&= \int_{\R}\chi_T(t)\,\left(Q^{(\tN)}(h)-f(\h) \right)\,e^{-it(Q^{(\tN)}(h) - f(\h))/\h}\Phi_0\,dt\\
&= \int_{\R}\chi_T(t)\,i\h\partial_t \, (e^{-it(Q^{(\tN)}(h) - f(\h))/\h})\Phi_0\,dt\\
&= -i\h \int_{\R}(\partial_t\chi_T(t))\,e^{-it(Q^{(\tN)}(h) - f(\h))/\h}\Phi_0\,dt\,.
\end{align*}
The norm of the last integral was essentially computed in
Lemma~\ref{l:norm-qmode} except our cutoff in time is now
$-i\h \, \chi'(t/T)/T$.  A division by the asymptotic formula
in the same Lemma for $\norm{\Psi_{\chi_T, \h}}$ finishes the first
statement of our proposition.  

The second statement follows from our quasimode having spectral width
which is clearly $o(\h)$ and therefore yielding an invariant semiclassical
measure under $\exp(tH_{q^{(\tN)}})$ \cite{Zworski:SemiclassicalAnalysis}.
The microlocal support statement from Corollary~\ref{c:localization2}
tells us that this can only be $\delta_0$.
\end{proof}

The spectral width of the normalized mode $\tilde{\Psi}^{(\tN)}_{\chi_T, h}$
then takes the form
$$
F(\h)=\frac{\h}{T_{\vareps_2}}\frac{\norm{\chi'}_{L^2}}{\norm{\chi}_{L^2}}(1 + \cO_{\tN,r}(1/|\log\h|))\,,
$$
We note that the ratio $\frac{\norm{\chi'}_{L^2}}{\norm{\chi}_{L^2}}$
is essentially a universal constant.  Since
\cite[Lem.\ 5.16]{EswarathasanNonnenmacher:QuasimodeScarringSurfaces_preprint}
determined the infimum of these ratios to be $\pi/2$, we make the slight improvement of choosing $\chi$ for which the
ratio is $(1+\epsilon_2)\pi/2$.  The resulting spectral width is therefore
\begin{eqnarray*}
F(\h) = & \pi \tilde{\lambda}_1 \frac{1 + \epsilon_2}{1-\epsilon_2}\frac{\h}{|\log h|} + \cO_{\tN,r}\left(\frac{h}{|\log h|^2} \right) \\
\leq & \pi \tilde{\lambda}_1 (1 + 3\epsilon_2)\frac{\h}{|\log h|} + \cO_{\tN,r}\left(\frac{h}{|\log h|^2} \right)
\end{eqnarray*}
where $\tilde{\lambda}_1 = 2$.

\section{A log-scale quasimode on $N$ and the proof of Theorem \ref{t:main}}

We now set $C = \pi \tilde{\lambda}_1 (1 + 3\epsilon_2)$.  Having constructed quasimodes of spectral width $\frac{C\h}{\abs{\log\h}}$
for our transverse dynamics in the Birkhoff Normal Form coordinates,
we would like to transport them into our
collar neighborhood $N_{\epsilon_1}(M)$ and plug them into our ansatz.
The necessary ingredient is cutting off $\Psi_{\chi_T, \h}$ in space.

We revert to our local Fermi-normal coordinate system
$(\eta, w', x)$ in the collar neighbourhood $N_{\epsilon_1}(M)$ determined in
Section~\ref{s:coords} where $(\eta, w')$ are local coordinates on $M$ and $x$
is our $r$-dimensional transverse variable.  Consider the energy $E_{\h} = 1 + f(\h)$.

By Lemma~\ref{l:ansatz-separates}, it suffices to analyze the behavior
of the result of transporting the quasimode of the previous section to
a quasimode for $K_x(\h)$ in the $x$ variable, which we proceed to do.

Let $\Upsilon \in \Cic(\R^r)$ be supported on
$[-\epsilon_1/2,\epsilon_1/2]^r$ and equal to 1 on
$[-\epsilon_1/3, \epsilon_1/3]^r$. Choose $\epsilon_2>0$ which is related to our semiclassical averaging time.  We set
$\psih = \Upsilon(x) U_{\tN}\left(\Psi^{(\tN)}_{\chi_T, h}\right)$ and set
\bequ \label{e:qmodeN}
\bPsih = \phih(\eta, w') \psih(x)\,.
\eequ

In the definition of $\psih$, $U_{\tN}(\h)$ is the microlocally unitary
FIO discussed in Proposition~\ref{p:qbnf} and
$\Psi^{(\tN)}_{\chi_T, \h}$ is as in equation \eqref{eq:def-qmode}.
Then for $\h \leq \h_0(\epsilon_1)$, $\psih$ will be supported in the range of the
transverse variable $u$ permitted in the collar neighbourhood making our
function $\bPsih$ is well-defined.  We need to verify that the cutoff $\Upsilon$ does not affect the norm,
spectral behavior, and concentration properties.

For the norm, Corollary~\ref{c:localization2} and properties
of Fourier integral operators associated to canonical transformations
(particularly those appearing Proposition~\ref{p:qbnf}) tell us that
for $\h \leq \h_0(\epsilon_1)$ small enough,
\bequ
\norm{\Upsilon U_{\tN}\left(\Psi^{(\tN)}_{\chi_T, \h}\right)}
       _{L^2\left(\R^r, (1+|x|^2)^{\frac{m-1}{2}} dx\right)} =
\norm{\Psi^{(\tN)}_{\chi_T, \h}} + \cO_{\tN}(\h^{\infty}) = 1 + \cO_{\tN}(\h^{\infty}).
\eequ

Note that the statement of Corollary~\ref{c:localization2} is exactly that
$\{\psih\}_{\h}$ concentrate at $0$ in the Birhkoff normal form coordinates.
Proposition~\ref{p:qbnf} demonstrates this concentration carries to concentration near $x=0$ for
$\{U_{\tN}\left(\tilde{\Psi}^{(\tN)}_{\chi_T, \h}\right)\}_{\h}$ and multiplying by
the cutoff $\Upsilon$ does not affect that for $\h \leq \h_0(\epsilon_1)$.

For the spectral width, we need to verify that applying the FIO $U_N$ and
multiplication by $\Upsilon$ have negligible effect.  Concerning
$U_{\tN}$, we have
\begin{eqnarray}
 & \norm{K_x(\h)\left( \Upsilon U_{\tN}\left(\Psi^{(\tN)}_{\chi_T, \h}\right)\right)}_{L^2\left(\R^r, (1+|x|^2)^{\frac{m-1}{2}} dx\right)} = \\
& \norm{U_{\tN}\left( \left(Q^{(\tN)}(\h)  + R_{\tN+1}(\h) \right) \, \Psi^{(\tN)}_{\chi_T, \h}\right)} +  \cO_{\tN,r}(\h^{\infty})
\end{eqnarray}
for $\h$ small enough.  Proposition \ref{p:logwidth} along with our optimized cutoffs $\chi$ then give:
\bequ
\norm{U_{\tN}\left( Q^{(\tN)}(\h)  \, \Psi^{(\tN)}_{\chi_T, h}\right)}
  = \pi \tilde{\lambda}_1 (1 + 3\epsilon_2)\frac{\h}{|\log \h|} 
    + \cO_{\tN,r}\left(\frac{\h}{|\log\h|^2}\right).
\eequ

The contribution from the remainder $R^{(\tN+1)}(\h)$ is small: a
second application of Corollary \ref{c:localization2} tells us that
$R^{(\tN+1)}(\h) \Psi^{(\tN)}_{\chi_T, \h} = R^{(\tN+1)}(\h) \Theta_{\h^{\epsilon_2/3}}(\h)\tilde{\Psi}^{(\tN)}_{\chi_T, \h} + \cO_{\tN,r}(\h^{\infty})$.
The composition calculus for Weyl quantizations gives
$R^{(\tN+1)}(\h) \Theta_{\h^{\epsilon_2/3}}(\h) = \OpWh \left( r_{\tN+1} \sharp \Theta_{\h^{\epsilon_2/3}} \right)$.  The Weyl symbol of this Moyal product
satisfies the bound $$\cO_{r}(\h^{(\tN+1)\epsilon_2/3})$$ after using the symbol
estimates in Proposition \ref{p:qbnf}.  Hence, letting $(\tN+1)\epsilon_2/3 > 1$ and recalling that $l \geq 1$ from the Dyson expansion estimate (\ref{ineq:remest}) guarantees that our
various remainder estimates are $o(\h/|\log \h|^2)$.

Finally, multiplication by $\Upsilon$ has no effect since the symbol of
$\Upsilon$ is the constant $1$ near $0$ and we have chosen $\h \leq \h_0(\epsilon)$ so that the quasimode is already concentrated near $0$ before the multiplication. This completes the proof of statement (1) of Theorem \ref{t:main}.

\begin{rem} \label{r:paramorder}
We record here the order in which the various parameters in the construction
have been chosen:
\begin{itemize}
\item Fix a codimension $r < n = \dim N$.
\item Given $M \subset N$ of codimension $r$, choose $\epsilon_1 > 0$ such that
the collar neighborhood $N_{\epsilon_1}(M)$ behaves as expected, including
in supporting a convenient coordinate system.
\item Choose $\epsilon_2>0$, giving the averaging time $T_{\epsilon_2}$.
\item Choose $\tN>0$ such that $\tN+1 > 3/\epsilon_2$.
\item Choose any $l>1$.
\item Determine $\h_0$ depending on all the previous choices such that if $0<\h<\h_0$ then the final state $\bPsih$ is microlocalized within the collar neighbourhood $N_{\epsilon_1}(M)$.
\end{itemize}
\end{rem}

The following lemma is a slight restatement of that in Section 6 of \cite{EswarathasanNonnenmacher:QuasimodeScarringSurfaces_preprint} and proves statement (2) of our Theorem \ref{t:main}.  We note that its proof is independent of whether the support of the invariant semiclassical measure $\mu_{sc}$ associated to $\{\bPsih\}_{\h}$ is smooth or not.  Therefore, we leave it to the interested reader for details.

\begin{lem} [Partial localization on invariant subsets] \label{l:partloc}
Let $\epsilon_2, \, \epsilon_3>0$ be given.  Let $E_0>0$ be a regular energy level of $-h^2 \Delta_N$, and $\mu_{F}$ be any invariant semiclassical measure on $S_{E_0}N$ with support $F$ a closed set.  Let $C_{width} = \pi \tilde{\lambda}_{1}(1+3\epsilon_2)$ be the $\h$-independent factor in the spectral width of the fully localized mode $\boldsymbol{\Psi}_h$ defined in (\ref{e:qmodeN}).  Then there exist quasimodes $\tilde{\boldsymbol{\Psi}}_h$ of spectral width $\epsilon_3 \frac{h}{|\log h|}$ and central energies $E_{\h} = E_0 + \mathcal{O}\left(\frac{\h}{|\log \h|}\right)$ whose semiclassical measure on $S_{E_0}N$ contains an atom $w_{F} \cdot \mu_F$ in its ergodic decomposition where 
\bequ
w_{F} \geq \frac{\epsilon_3}{C_{width}} \frac{2}{3 \sqrt{3}} + \cO\left(\left(\frac{\epsilon_3}{C_{width}}\right)^2\right).
\eequ
\end{lem}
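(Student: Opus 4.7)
The plan is to start from the fully localized quasimode $\bPsih$ of spectral width $C_{\mathrm{width}}\h/|\log\h|$ and apply a smooth spectral cutoff of width $\epsilon_3\h/|\log\h|$ to obtain $\tilde{\bPsih}$, then verify that the cutoff preserves a quantitative fraction of the atom at $\mu_F$. Writing $H_\h=-\h^2\DN$ and $\sigma_\h=\h/|\log\h|$, and fixing an even $\chi\in \Cic(\R)$ with $\chi(0)=1$, $0\le\chi\le 1$, and $\|t\chi(t)\|_{L^\infty}\le 1$, set
\[
\tilde{\bPsih} \defeq \frac{\chi\!\left(\tfrac{H_\h-E_\h}{\epsilon_3\sigma_\h}\right)\bPsih}{\bigl\|\chi\!\left(\tfrac{H_\h-E_\h}{\epsilon_3\sigma_\h}\right)\bPsih\bigr\|}.
\]
Functional calculus immediately yields $\|(H_\h-E_\h)\tilde{\bPsih}\|\le \epsilon_3\sigma_\h$, so the spectral width condition is in hand; the interesting content is the lower bound on $\bigl\|\chi(\cdot)\bPsih\bigr\|$ and the identification of the semiclassical measure of $\tilde{\bPsih}$.

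The main step is to express the spectral cutoff as a time average,
\[
\chi\!\left(\tfrac{H_\h-E_\h}{\epsilon_3\sigma_\h}\right)=\frac{1}{2\pi}\int_\R \hat\chi(s)\,e^{-isE_\h/(\epsilon_3\sigma_\h)}\,U\!\left(-\tfrac{s|\log\h|}{\epsilon_3}\right)\,ds,
\]
where $U(t)=e^{-itH_\h/\h}$ is the Schrödinger propagator. Because $\hat\chi$ decays rapidly, the effective times $s|\log\h|/\epsilon_3$ remain of order $|\log\h|$, hence within the Ehrenfest window on which Egorov's theorem applies (provided the earlier parameter $\epsilon_2$ was chosen compatibly with $\epsilon_3$). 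Inserting this Fourier representation into both $\langle \tilde{\bPsih},\Oph(a)\tilde{\bPsih}\rangle$ and the normalizing factor $\|\chi(\cdot)\bPsih\|^2$ produces double time integrals of propagator matrix elements of $\bPsih$; applying Egorov's theorem and then invoking the $\Phi_t$-invariance of $\mu_F$ collapses each integrand into a product of $\hat\chi$ factors times $\mu_F(a)$, modulo $\cO(\h/|\log\h|)$ errors. Dividing the two expressions shows that the semiclassical measure of $\tilde{\bPsih}$ contains an ergodic component $w_F\,\mu_F$ with $w_F$ determined by explicit integrals of $\chi$ and $\hat\chi$.

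The remaining task is to optimize $w_F$ over admissible $\chi$: a one-dimensional variational problem subject to $\chi(0)=1$ and $\|t\chi(t)\|_{L^\infty}\le 1$. The extremal value of this problem was identified in the surface paper \cite{EswarathasanNonnenmacher:QuasimodeScarringSurfaces_preprint} (around their Lemma 5.16) and equals $\tfrac{2}{3\sqrt{3}}$, which after rescaling by $\epsilon_3/C_{\mathrm{width}}$ produces the claimed lower bound with $\cO((\epsilon_3/C_{\mathrm{width}})^2)$ correction coming from the Egorov remainder and from higher-order terms in the expansion of $\|\chi(\cdot)\bPsih\|^2$. The chief obstacle is to control the Egorov remainder \emph{uniformly in the submanifold $M$ underlying $\bPsih$}, since in the proof of Corollary~\ref{c:sigmund} one varies $M$ through an infinite family of closed geodesics; this uniformity follows from the bookkeeping of parameters recorded in Remark~\ref{r:paramorder}, which already fixes $\epsilon_1,\epsilon_2,\tN,l$ in an order depending only on the codimension $r$ and not on the specific $M$, so that the same universal constant $\tfrac{2}{3\sqrt{3}}$ applies across the entire family of quasimodes needed in Corollary~\ref{c:sigmund}.
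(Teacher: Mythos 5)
Your high-level strategy --- a smooth spectral cutoff $\chi$, Fourier-representing the cutoff as a time average, then comparing the result against the scarred part --- is the right one, and is indeed what Section 6 of \cite{EswarathasanNonnenmacher:QuasimodeScarringSurfaces_preprint} (to which the paper refers without reproducing the argument) does. However, two of your steps do not go through. The central gap is the claim that Egorov plus invariance of $\mu_F$ collapses the double time integral to $\mu_F(a)$ with $\mathcal{O}(\h/|\log\h|)$ errors. Unwinding the Fourier formula leaves matrix elements of the form $\langle \bPsih, \Oph(a\circ\Phi^{t'})\,U(t-t')\,\bPsih\rangle$ with $t,t'$ of order $|\log\h|$. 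For a log-scale quasimode treated as a black box, $U(\tau)\bPsih$ differs from $e^{-i\tau E_\h/\h}\bPsih$ in norm by $O\!\left(|\tau|\cdot C_{\mathrm{width}}/|\log\h|\right)$, which at $|\tau|\sim|\log\h|$ is $O(1)$, not $O(\h/|\log\h|)$: Egorov controls the conjugated observable, not the propagated state. The same black-box viewpoint also gives no lower bound on $\|\chi(\cdot)\bPsih\|$ (Markov's inequality controls only the spectral tail on windows \emph{wider} than $C_{\mathrm{width}}\h/|\log\h|$, while you project onto a narrower one), so the width bound $\|(H_\h - E_\h)\tilde{\boldsymbol{\Psi}}_\h\|\le\epsilon_3\sigma_\h$ is not ``in hand'' either, since normalization divides by $\|\chi(\cdot)\bPsih\|$. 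The fix --- and the real content of the cited Section 6 --- is to exploit the explicit time-average structure \eqref{eq:def-qmode} of $\bPsih$: there $U(\tau)$ acts on $\tilde{\Phi}^{(\tN)}_{\chi_T,\h}$ by shifting the weight $\chi_T$ and multiplying by a phase, so the propagation is tracked \emph{exactly}, and the spectral density of $\bPsih$ has the explicit profile $|\hat\chi_T|^2$ from which the fraction of mass in the narrow window can be computed.

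Second, you cite the wrong optimization. Lemma 5.16 of \cite{EswarathasanNonnenmacher:QuasimodeScarringSurfaces_preprint} minimizes $\|\chi'\|_{L^2}/\|\chi\|_{L^2}$ and yields $\pi/2$; that constant has already been spent above in fixing $C_{\mathrm{width}}=\pi\tilde\lambda_1(1+3\epsilon_2)$. The coefficient $\frac{2}{3\sqrt3}=\max_{a\in[0,1]}\,a(1-a^2)$ arises from a separate one-parameter trade-off in their Section 6 between the window width $\epsilon_3$ and the captured $L^2$-mass; it is not the extremal value of a variational problem over $\chi$ under the constraints $\chi(0)=1$, $\|t\chi\|_\infty\le 1$. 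Your final remark on uniformity in $M$, needed for Corollary~\ref{c:sigmund}, is a legitimate concern, but it sits on top of a chain of estimates that first needs the repairs above.
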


The proof of statement (3) of Theorem \ref{t:main} then follows by replacing the tangential eigenfunction $\phi_h$  (\ref{e:qmodeN}) with a log-scale quasimode and applying Lemma \ref{l:ansatz-separates}.

\bibliographystyle{amsplain}
\bibliography{scarring}

\providecommand{\bysame}{\leavevmode\hbox to3em{\hrulefill}\thinspace}
\providecommand{\MR}{\relax\ifhmode\unskip\space\fi MR }
\providecommand{\MRhref}[2]{%
  \href{http://www.ams.org/mathscinet-getitem?mr=#1}{#2}
}
\providecommand{\href}[2]{#2}
\begin{thebibliography}{10}

\bibitem{AbrahamMarsden:FoundMechanics}
Ralph Abraham and Jerrold~E. Marsden, \emph{Foundations of mechanics},
  Benjamin/Cummings Publishing Co., Inc., Advanced Book Program, Reading,
  Mass., 1978, Second edition, revised and enlarged, With the assistance of
  Tudor Ra{\c{t}}iu and Richard Cushman. \MR{515141}

\bibitem{Anantharaman:QUE_Ent}
Nalini Anantharaman, \emph{Entropy and the localization of eigenfunctions},
  Ann. of Math. (2) \textbf{168} (2008), no.~2, 435--475. \MR{2434883}

\bibitem{Anantharaman:ICM2010Survey}
\bysame, \emph{A hyperbolic dispersion estimate, with applications to the
  linear {S}chr\"odinger equation}, Proceedings of the {I}nternational
  {C}ongress of {M}athematicians. {V}olume {III} (New Delhi), Hindustan Book
  Agency, 2010, pp.~1839--1861. \MR{2827869}

\bibitem{AnantharamanKochNonnenmacher:BetterEntropy}
Nalini Anantharaman, Herbert Koch, and St{\'e}phane Nonnenmacher, \emph{Entropy
  of eigenfunctions}, New Trends in Mathematical Physics, Selected
  contributions of the XVth International Congress on Mathematical Physics
  (Vladas Sidoravičius, ed.), Springer, 2006, \url{arXiv:math-ph/0704.1564},
  pp.~1--22.

\bibitem{AnantharamanNonnenmacher:HalfEntropyAnosov}
Nalini Anantharaman and St{\'e}phane Nonnenmacher, \emph{Half-delocalization of
  eigenfunctions for the {L}aplacian on an {A}nosov manifold}, Ann. Inst.
  Fourier (Grenoble) \textbf{57} (2007), no.~7, 2465--2523, Festival Yves Colin
  de Verdi{\`e}re. \MR{2394549 (2009m:81076)}

\bibitem{AnantharamanSilberman:HaarComponent}
Nalini Anantharaman and Lior Silberman, \emph{A {H}aar component for quantum
  limits on locally symmetric spaces}, Israel J. Math. \textbf{195} (2013),
  no.~1, 393--447. \MR{3101255}

\bibitem{Brooks:QuasimodeScarring}
Shimon Brooks, \emph{Logarithmic-state quasimodes that do not equidistribute},
  Int. Math. Res. Not. IMRN (2015), no.~22, 11934--11960. \MR{3456709}

\bibitem{BrooksLindenstrauss:OneHecke}
Shimon Brooks and Elon Lindenstrauss, \emph{Joint quasimodes, positive entropy,
  and quantum unique ergodicity}, Invent. Math. \textbf{198} (2014), no.~1,
  219--259. \MR{3260861}

\bibitem{CdV:Avg_QUE}
Yves Colin~de Verdi{\`e}re, \emph{Ergodicit\'e et fonctions propres du
  laplacien}, Comm. Math. Phys. \textbf{102} (1985), no.~3, 497--502.
  \MR{87d:58145}

\bibitem{CdVParisse:UnstableScarsI}
Yves Colin~de Verdi{\`e}re and Bernard Parisse, \emph{\'{E}quilibre instable en
  r\'egime semi-classique. {I}. {C}oncentration microlocale}, Comm. Partial
  Differential Equations \textbf{19} (1994), no.~9-10, 1535--1563.
  \MR{MR1294470 (96b:58112)}

\bibitem{CombescureRobert:SemiclassicalSpreading}
M.~Combescure and D.~Robert, \emph{Semiclassical spreading of quantum wave
  packets and applications near unstable fixed points of the classical flow},
  Asymptot. Anal. \textbf{14} (1997), no.~4, 377--404. \MR{1461126}

\bibitem{DimassiSjostrand:SpecAsymp}
Moesz Dimassi and Johannes Sjostrand, \emph{Spectral asymptotics in the
  semiclassical limit}, London Mathematical Society, Lecture Note Series 268,
  199, First Edition.

\bibitem{EswarathasanNonnenmacher:QuasimodeScarringSurfaces_preprint}
Suresh Eswarathasan and St{\'e}phane Nonnenmacher, \emph{Strong scarring of
  logarithmic quasimodes}, to appear in Ann. Inst. Fourier, preprint available
  at \url{arXiv:math.SP/1507.08371}, 2015.

\bibitem{FaureNonenmacherDeBievre:CatmapScar}
Fr{\'e}d{\'e}ric Faure, St{\'e}phane Nonnenmacher, and Stephan De~Bi{\`e}vre,
  \emph{Scarred eigenstates for quantum cat maps of minimal periods}, Comm.
  Math. Phys. \textbf{239} (2003), no.~3, 449--492. \MR{MR2000926
  (2005a:81076)}

\bibitem{GerardLeichtnam:QEboundary}
Patrick Gerard and Eric Leichtnam, \emph{Ergodic properties of eigenfunctions
  for the dirichlet problem}, Duke Math. J. \textbf{71} (1993), 559--607.

\bibitem{Hassell:NonUniqueBilliard}
Andrew Hassell, \emph{Ergodic billiards that are not quantum unique ergodic},
  Ann. of Math. (2) \textbf{171} (2010), no.~1, 605--619, With an appendix by
  the author and Luc Hillairet. \MR{2630052 (2011k:58050)}

\bibitem{Heller:Scarring}
Eric~J. Heller, \emph{Bound-state eigenfunctions of classically chaotic
  {H}amiltonian systems: scars of periodic orbits}, Phys. Rev. Lett.
  \textbf{53} (1984), no.~16, 1515--1518. \MR{MR762412 (85k:81055)}

\bibitem{Hormander:PD_Ops_III}
Lars H{\"o}rmander, \emph{The analysis of linear partial differential
  operators. {III}}, Grundlehren der Mathematischen Wissenschaften [Fundamental
  Principles of Mathematical Sciences], vol. 274, Springer-Verlag, Berlin,
  1994, Pseudo-differential operators, Corrected reprint of the 1985 original.
  \MR{95h:35255}

\bibitem{Iantchenko:BirhoffNormalForm98}
Alexei Iantchenko, \emph{La forme normale de {B}irkhoff pour un op\'erateur
  int\'egral de {F}ourier}, Asymptot. Anal. \textbf{17} (1998), no.~1, 71--92.
  \MR{1632700}

\bibitem{Lazutkin:KAM}
Vladimir~F. Lazutkin, \emph{K{AM} theory and semiclassical approximations to
  eigenfunctions}, Ergebnisse der Mathematik und ihrer Grenzgebiete (3)
  [Results in Mathematics and Related Areas (3)], vol.~24, Springer-Verlag,
  Berlin, 1993, With an addendum by A. I. Shnirel\cprime man. \MR{MR1239173
  (94m:58069)}

\bibitem{Lindenstrauss:SL2_QUE}
Elon Lindenstrauss, \emph{Invariant measures and arithmetic quantum unique
  ergodicity}, Ann. of Math. (2) \textbf{163} (2006), no.~1, 165--219.
  \MR{MR2195133}

\bibitem{Riviere:HalfEntropyNonpositiveCurv}
Gabriel Rivi{\`e}re, \emph{Entropy of semiclassical measures for nonpositively
  curved surfaces}, Ann. Henri Poincar\'e \textbf{11} (2010), no.~6,
  1085--1116. \MR{2737492}

\bibitem{RudnickSarnak:Conj_QUE}
Ze{\'e}v Rudnick and Peter Sarnak, \emph{The behaviour of eigenstates of
  arithmetic hyperbolic manifolds}, Comm. Math. Phys. \textbf{161} (1994),
  no.~1, 195--213. \MR{95m:11052}

\bibitem{Sigmund:SpaceInvariantMeasures}
Karl Sigmund, \emph{On the space of invariant measures for hyperbolic flows},
  Amer. J. Math. \textbf{94} (1972), 31--37. \MR{0302866}

\bibitem{Sjostrand:Resonances2D}
Johannes Sj{\"o}strand, \emph{Resonances associated to a closed hyperbolic
  trajectory in dimension 2}, Asymptot. Anal. \textbf{36} (2003), no.~2,
  93--113. \MR{2021528}

\bibitem{Schnirelman:Avg_QUE}
Alexander~I. {\v{S}}nirel{\cprime}man, \emph{Ergodic properties of
  eigenfunctions}, Uspekhi Mat. Nauk \textbf{29} (1974), no.~6(180), 181--182.
  \MR{53 \#6648}

\bibitem{VerginiSchneider:MatrixCoeffScars}
Eduardo~G. Vergini and David Schneider, \emph{Asymptotic behaviour of matrix
  elements between scar functions}, J. Phys. A \textbf{38} (2005), no.~3,
  587--616. \MR{2116626}

\bibitem{Zelditch:SL2_Lift_QE}
Steven Zelditch, \emph{Uniform distribution of eigenfunctions on compact
  hyperbolic surfaces}, Duke Math. J. \textbf{55} (1987), no.~4, 919--941.
  \MR{89d:58129}

\bibitem{ZelditchZworski:simpleQEboundary}
Steven Zelditch and Maciej Zworski, \emph{Ergodicity of eigenfunctions for
  ergodic billiards}, Comm. Math. Phys. \textbf{175} (1996), 673--682.

\bibitem{Zworski:SemiclassicalAnalysis}
Maciej Zworski, \emph{Semiclassical analysis}, Graduate Studies in Mathematics,
  vol. 138, American Mathematical Society, Providence, RI, 2012. \MR{2952218}

\end{thebibliography}

\end{document}